\newtheorem{thm}{Theorem}[section]
\newtheorem{lem}[thm]{Lemma}
\newtheorem{cor}[thm]{Corollary}
\newtheorem{prop}[thm]{Proposition}
\newenvironment{customthm}[1]
  {\innercustomthm}{\endinnercustomthm}
\newenvironment{customcor}[1]
	{\innercustomcor}
	{\endinnercustomcor}
\theoremstyle{definition}
\newtheorem{defn}[thm]{Definition}
\newtheorem{eg}[thm]{Example}
\theoremstyle{remark}
\newtheorem{rem}[thm]{Remark}
\numberwithin{equation}{section}
 \newcommand{\onto}{\twoheadrightarrow}
\DeclareMathOperator{\supp}{supp} 
\DeclareMathOperator{\Hom}{Hom}%
\DeclareMathOperator{\Ext}{Ext}%
\DeclareMathOperator{\undim}{\underline{dim}}
\newcommand{\field}[1]{\mathbb{#1}}
\newcommand{\ZZ}{\ensuremath{{\field{Z}}}}
\newcommand{\RR}{\ensuremath{{\field{R}}}}
\newcommand{\commentout}[1]{}
\newcommand{\cC}{\ensuremath{{\mathcal{C}}}}
\newcommand{\cD}{\ensuremath{{\mathcal{D}}}}
\newcommand{\no}[1]{}
\title{Exceptional sequences and rooted labeled forests}
\author{Kiyoshi Igusa}
\email{igusa@brandeis.edu}
\thanks{First author supported by Simons Foundation Grant \#686616.}
\author{Emre Sen}
\email{emresen641@gmail.com}
\keywords{support tilting, relatively projective, braid group, Garside element}
\subjclass[2020]{
16G20: 05C05}  	
\begin{document}

\begin{abstract}
 We give a representation-theoretic bijection between rooted labeled forests with $n$ vertices and complete exceptional sequences for the quiver of type $A_n$ with straight orientation. The ascending and descending vertices in the forest correspond to relatively injective and relatively projective objects in the exceptional sequence. We conclude that every object in an exceptional sequence for linearly oriented $A_n$ is either relatively projective or relatively injective or both.
We construct a natural action of the extended braid group on rooted labeled forests and show that it agrees with the known action of the braid group on complete exceptional sequences. We also describe the action of $\Delta$, the Garside element of the braid group, on rooted labeled forests using representation theory and show how this relates to cluster theory.
\end{abstract}

\maketitle

\tableofcontents



\section*{Introduction}

Exceptional sequences of objects is a central subject in algebraic geometry, representation theory and combinatorics. For the Dynkin quiver of type $A_n$, there are $(n+1)^{n-1}$ complete exceptional sequences \cite{Seidel}. From the combinatorial point of view, there is a vast list of enumeration problems in which we encounter sets of size $(n+1)^{n-1}$ and indeed exceptional sequences of type $A_n$ can be interpreted as maximal chains in the poset (partially ordered set) of noncrossing partitions of the set $\{0,1,2,\cdots,n\}$, trees with $n$ labeled edges, factorizations of the cyclic permutation $(012\cdots n)$ as a product of $n$ transpositions \cite{GY}, \cite{BW}, \cite{GesselSeo}, \cite{Sokal}, \cite{Loo}, \cite{M}, \cite{Seidel}, \cite{IT}, \cite{RingelExcSeq}. Our aim is to give another combinatorial interpretation which has representation theoretical outcomes.

 An \emph{exceptional sequence} for a hereditary algebra $\Lambda$ over any field is a sequence of rigid indecomposable $\Lambda$-modules $(E_1,\cdots,E_k)$  so that 
\[
\Hom_\Lambda(E_j,E_i)=\Ext_\Lambda(E_j,E_i)=0
\]
for all $1\leq i<j\leq k$. The sequence is called \emph{complete} if $k$ is equal to the number of simple $\Lambda$-modules which is the number of vertices in the quiver of $\Lambda$. There is lot of work on exceptional sequences and their generalizations from the representation theory point of view: \cite{Seidel}, \cite{Ringel2}, \cite{BM}, \cite{BRT}, \cite{GIMO}, \cite{Ig-Sh}, \cite{MC}, \cite{MT}, \cite{Sen}, \cite{Sen2}. In this work, we consider a homological property of modules: which elements of an exceptional sequence can be either relatively projective or relatively injective. We recall that a term $E_j$ in an exceptional sequence is called \emph{relatively projective} if it is a projective object of the \emph{perpendicular category} which is the full subcategory of $mod$-$\Lambda$ of objects $X$ so that $\Hom(E_k,X)=0=\Ext(E_k,X)$ for all $k>j$. (See Definition \ref{def: relatively projective and perp cat}.) In particular, $E_1,E_2,\cdots,E_j$ lie in this perpendicular category. Relatively injective objects are defined dually. (See section \ref{ss: rel proj and rel inj}.) In the case of \emph{signed exceptional sequences} introduced in \cite{IT13}, relatively projective terms are allowed to be ``shifted''. This additional structure converts the poset of noncrossing partitions into a category \cite{I:ncp} and maximal chains in the poset become maximal sequences of composable morphisms. By \cite{IT13}, signed exceptional sequences are in bijection with ordered clusters tilting sets. Instead of $(n+1)^{n-1}$ exceptional sequences (in type $A_n$) we get $n!$ times the Catalan number $C_{n+1}=\frac1{n+2}\binom{2n+2}{n+1}$ signed exceptional sequences. (Theorem \ref{thm: IT13: signed exc seq} below.)

In this paper we construct a very simple 1-1 correspondence between exceptional sequences for the linear $A_n$ quiver $Q:1\to 2\to \cdots\to n$ and rooted labeled forests with $n$ vertices. This bijection has many nice applications including interpretation of relatively projective and injective modules. Each exceptional sequence is represented by a planar graph which makes many algebraic concepts clearly visible. The correspondence is given by the following theorem in which we represent modules by their supports. 

An indecomposable representation of a linear quiver of type $A_n$ is given by its support which is a closed interval $[a,b]$ where $1\le a\le b\le n$. The representation with this support will be labeled $M_{ab}$. This module has top the simple module at $a$ and socle the simple module at $b$.

\begin{customthm}{A1}\label{thm A1}
(Theorem \ref{theorem A1}) For each complete exceptional sequences $(E_1,\cdots,E_n)$ for $A_n$ consider the partial ordering on the set $\{v_1,v_2,\cdots,v_n\}$ given by $v_i<v_j$ if the support of $E_i$ is contained in the support of $E_j$. The Hasse diagram on this poset is a rooted labeled forest. Conversely, any rooted forest with $n$ vertices labeled $v_1,\cdots,v_n$ is the Hasse diagram for a unique exceptional sequence. 
\end{customthm}

One key result of this paper is that the location of the relatively projective and relatively injective objects in an exceptional sequence are immediately visible in the labeled forest as descending and ascending vertices in the forest (plus the roots which are both relatively projective and relatively injective). More precisely:

\begin{customthm}{A2}\label{thm A2}
(Theorem \ref{theorem A2}) In a complete exceptional sequence for linear $A_n$, $E_i$ is relatively projective if and only if either the corresponding node $v_i$ is a root of a tree in the forest (i.e., the support of $E_i$ is maximal) or $i<j$ for $E_j$ the smallest object containing $E_i$ in its support. Dually, $E_i$ is relatively injective if and only if either $v_i$ is a root in the forest or $i>j$ for $E_j$ the smallest object containing $E_i$ in its support.
\end{customthm}

Theorems \ref{thm A1} and \ref{thm A2} are illustrated in Figure \ref{Fig: introduction}. The labeled forest shows that there are 5 relatively projective objects in this exceptional sequence: $E_2,E_6,E_1,E_4,E_5$. The first two correspond to the roots $v_2,v_6$ of the forest and the last three correspond to the vertices $v_1,v_4,v_5$ whose indices $1,4,5$ are smaller than the indices $2,6,6$ of their parents $v_2,v_6,v_6$ resp.

\begin{figure}[htbp]
\begin{center}
\begin{tikzpicture}[scale=.75]
\draw[thick] (-7,1)--(-6,2.5)--(-5,1) (-11,1)--(-10,3)--(-13,1) (-9,1)--(-10,3);
\foreach \x in {(-7,1), (-6,2.5), (-5,1),(-11,1),(-13,1),(-10,3),(-9,1)}
\draw[fill,white] \x circle[radius=5mm];
\draw[thick] (-7,1) node{$v_1$} circle[radius=5mm];
\draw[thick] (-6,2.5) node{$v_2$} circle[radius=5mm];
\draw[thick] (-5,1) node{$v_3$} circle[radius=5mm];
\draw[thick] (-11,1) node{$v_4$} circle[radius=5mm];
\draw[thick] (-13,1) node{$v_5$} circle[radius=5mm];
\draw[thick] (-10,3) node{$v_6$} circle[radius=5mm];
\draw[thick] (-9,1) node{$v_7$} circle[radius=5mm];
\end{tikzpicture}
\caption{By Theorem \ref{thm A1}, this figure indicates the rooted labeled forest corresponding to the complete exceptional sequence for the quiver $A_7$:
\[
(E_1,E_2,E_3,E_4,E_5,E_6,E_7) = (M_{33} , M_{13} , M_{11} , M_{66} , M_{77} , M_{47} , M_{44})\qquad\qquad\qquad
\]
$M_{ab}$ denotes the module with support $[a,b]$. E.g., $M_{13}$ contains $M_{11}$ and $M_{33}$ in its support and $M_{aa}=S_a$ is the simple module at $a$.
}
\label{Fig: introduction}
\end{center}
\end{figure}

One consequence of Theorem \ref{thm A2} is the following.

\begin{customcor}{B}\label{cor B}(Corollary \ref{cor: C in text})
Every object in a complete exceptional sequence for linear $A_n$ is either relatively projective or relatively injective and at least one object is both.
\end{customcor}

Also, there is a three variable generating function for ascending and descending vertices in rooted labeled forests which is directly applicable to the corresponding exceptional sequence, giving a generating function for exceptional sequences:

\begin{customthm}{C}\label{cor C}(Theorem \ref{thm: main theorem})
We have a three variable generating function
\[
	P_n(a,b,c)=\sum a^pb^qc^r=c(a+(n-1)b+c)(2a+(n-2)b+c)\cdots ((n-1)a+b+c)
\]
where the sum is over all complete exceptional sequences $E_\ast$ for a linear quiver of type $A_n$ and the monomial $a^p b^q c^r$ is given by letting

$p=$ number of relatively projective but not relatively injective objects in $E_\ast$,

$q=$ number of relatively injective but not relatively projective objects in $E_\ast$,

$r=$ number of objects in $E_\ast$ which are both relatively projective and relatively injective.
\end{customthm}

We have the added bonus of visualizing the action of the braid group on exceptional sequences. Crawley-Boevey showed \cite{Crawley-Boevey} that the braid group acts transitively on the set of exceptional sequences for any acyclic quiver, and Ringel \cite{RingelExcSeq} extended this to all hereditary algebras. Here we show (in Figures \ref{fig: braid example}, \ref{fig: Braid example 2}, \ref{fig: A2 example}, \ref{fig: A3 example 1}) how to visualize this action using rooted labeled forests. This visualization is good enough to see the action of the Garside element of the braid group ($\Delta$). This is important for several reasons. 

For example, $\Delta^2$ generates the center of the braid group. For us, $\Delta$ is important since it gives a formula for the correspondence (proved in \cite{IT13}) between ``support tilting sets'' and ``signed exceptional sequences''. (See Proposition \ref{thm: clusters and c-vectors}). We give a detailed description of the action of $\Delta$ on rooted labeled forests in section \ref{ss: Garside}. Basically, $\Delta$ takes the projective vertices of a forest (on the left side), moves them to the top (to form the new roots) and moves the roots to the right side, to form the new injective vertices. (Propositions \ref{prop: properties of Delta F}, \ref{prop: properties of C F}, Figure \ref{fig: example of Delta on A7}). Thus $\Delta^2$ converts projective vertices of a forest into injective vertices (Figure \ref{fig: Delta 2}). Figure \ref{Fig: introduction} above has two projective vertices $v_5,v_6$ on the left, two roots $v_6,v_2$ at the top and two injective vertices $v_2,v_3$ on the right.

It has been pointed out to us that the braid group action on exceptional sequences already has a visualization given by parking functions in \cite{GG}. In section \ref{sec: parking functions} we review the bijection between exceptional sequences for linear $A_n$ and parking functions and the corresponding rooted labeled forests using ``Pr\"ufer codes''. We show using an example that this correspondence between exceptional sequences and rooted labeled forests is very different from ours.

There is another well-known visualization of exceptional sequences of type $A_n$ (with any orientation) given by ``chord diagrams'' which can be converted into labeled trees with $n+1$ vertices which are equivalent to rooted labeled forests with $n$ vertices. That construction, from \cite{GY}, is detailed in section \ref{ss: Goulden-Yong}. However, that construction does not keep track of relatively projective and injective objects which are the focus of this paper.

In conclusion, this paper gives an easy visualization of exceptional sequences using planar diagrams called ``rooted labeled forests'' (section \ref{sec 2}) and these forests are used to visualize the action of the braid group on exceptional sequences (section \ref{sec 3}), in particular the action of the important Garside element $\Delta$ (section \ref{sec 4}), and to show that exceptional sequences have a three variable generating function for its relatively projective and injective objects, the same as for rooted labeled forests.

\section{
Rooted labeled forests}\label{sec 2}\label{rooted labeled forest}

We will construct a 1-1 correspondence between complete exceptional sequences for linear $A_n$ and rooted labeled forest. Then, we derive some representation theoretic consequences.

\subsection{The Hasse diagram of an exceptional sequence}

We consider the Hasse diagram of an exceptional sequence and derive some of its properties. For example, we show that it is a rooted forest.

\begin{defn}\label{def: rooted labeled forest}
A \emph{rooted labeled forest} is a forest with vertices numbered $1$ through $n$ and a root chosen for each component. By adding an extra vertex labeled 0 and attaching it to each of the roots, such a structure is equivalent to a tree with $n+1$ vertices labeled $0$ through $n$.
\end{defn}

\begin{rem}\label{rem: when Hasse diagram is a rooted forest} The nodes of a rooted tree are partially ordered by paths to the root. Conversely, the Hasse diagram of a finite poset is a rooted forest if it has the property that each node has at most one parent. In that case, the maximal elements are roots of disjoint trees.
\end{rem}

Let $E_\ast=(E_1,\cdots,E_n)$ be a complete exceptional sequence for $A_n$. (Unless otherwise noted, we always take the orientation $1\to 2\to \cdots\to n$.) We define a partial ordering on the objects $E_i$ by $E_i\le E_j$ if the support of $E_i$ is contained in the support of $E_{j}$. In other words, $a_j\le a_i\le b_i\le b_j$ where $[a_i,b_i]$ denotes the support of $E_i$. We claim that the Hasse diagram of this partial ordering is a rooted labeled forest. This follow from the following lemma.

\begin{lem}\label{first lemma}
Given two objects $E_i=M_{a_i,b_i}$ and $E_j=M_{a_j,b_j}$ in an exceptional sequence, the intervals $[a_i,b_i]$, $[a_j,b_j]$ are ``noncrossing'', i.e. these closed interval are either disjoint or one is contained in the other.
\end{lem}

\begin{proof}
Given two objects whose supports cross, say, $M_{ab}$, $M_{cd}$ where $a<c\le b<d$, we have $\Hom(M_{cd},M_{ab})\neq0$ since $M_{cb}$ is a quotient module of $M_{cd}$ and a submodule of $M_{ab}$. Also, $\Ext(M_{ab},M_{cd})\neq0$ since we have a nonsplit exact sequence
\[
	0\to M_{cd}\to M_{cb}\oplus M_{ad}\to M_{ab}\to 0.
\]
So, both pairs $(M_{ab},M_{cd})$ and $(M_{cd},M_{ab})$ are not exceptional. Therefore, they cannot occur in an exceptional sequence in either order.
\end{proof}

\begin{lem}\label{second lemma} The set of elements above $E_i$ form a linearly ordered set.
\end{lem}

\begin{proof}
Suppose $E_i\le E_j,E_k$. Then the supports of $E_j,E_k$ meet and therefore, by Lemma \ref{first lemma}, one is contained in the other. So, they are linearly ordered.
\end{proof}

\begin{prop}\label{prop: Hasse diagram is a rooted forest}
The Hasse diagram of the objects $E_i$ in an exceptional sequence form a rooted labeled forest. Moreover, the roots are the maximal elements.
\end{prop}

\begin{proof}
By Lemma \ref{second lemma} a node cannot have two parents. Therefore, each connected component of the Hasse diagram is a tree with root at the top.
\end{proof}

\begin{eg}\label{eg: A7 Hasse diagram}
Consider the exceptional sequence for $A_7$:
\[
	(S_4, S_5, M_{15}, M_{67}, S_6, M_{12}, S_1)
\]
The  corresponding Hasse diagram is:
\[
\xymatrix{
E_4=M_{67}\ar@{-}[d] &&E_3=M_{15} \ar@{-}[dl] \ar@{-}[d]\ar@{-}[dr]\\
E_5=S_6 &E_2=S_5  & E_1=S_4   & E_6=M_{12}\ar@{-}[d]\\
	&& &
	E_7=S_1
	}
\]
We note that the leaves are simple modules. 
\end{eg}

\begin{rem}\label{rem: Hasse diagram in AR quiver}
The labeled forest can be drawn more systematically by choosing the vertices to lie in the Auslander-Reiten quiver of $\Lambda=KQ$. This is a planar diagram where the module $M_{ab}$ for $1\le a\le b\le n$ is placed at the point $(-b-a,b-a+1)$. For Example \ref{eg: A7 Hasse diagram} this is:
\[
\begin{tikzpicture}[scale=.75]
\draw[thick] (-12,1)--(-13,2) (-10,1)--(-6,5)--(-8,1)(-2,1)--(-6,5);
\foreach \x in {-14,-12,...,-2}\draw[fill] (\x,1) circle[radius=.5mm];
\foreach \x in {-13,-11,...,-3}
\draw[fill] (\x,2) circle[radius=.5mm];
\foreach \x in {-12,-10,...,-4}
\draw[fill] (\x,3) circle[radius=.5mm];
\foreach \x in {-11,-9,...,-5}
\draw[fill] (\x,4) circle[radius=.5mm];
\foreach \x in {-10,-8,-6}
\draw[fill] (\x,5) circle[radius=.5mm];
\foreach \x in {-9,-7}
\draw[fill] (\x,6) circle[radius=.5mm];
\draw[fill] (-8,7) circle[radius=.5mm];
\foreach \x in {(-12,1), (-2,1), (-8,1), (-10,1)}
\draw[fill,white] \x circle[radius=5mm];
\foreach \y in {(-13,2), (-6,5), (-3,2)}
\draw[fill,white] \y circle[radius=5mm];
\draw[thick] (-12,1) node{$E_5$} circle[radius=5mm];
\draw[thick] (-2,1) node{$E_7$} circle[radius=5mm];
\draw[thick] (-8,1) node{$E_1$} circle[radius=5mm];
\draw[thick] (-10,1) node{$E_2$} circle[radius=5mm];
\draw[thick] (-13,2) node{$E_{4}$} circle[radius=5mm];
\draw[thick] (-6,5) node{$E_{3}$} circle[radius=5mm];
\draw[thick] (-3,2) node{$E_{6}$} circle[radius=5mm];
\end{tikzpicture}
\]
Note that the $y$-coordinate of the point $M_{ab}$ is its length $b-a+1$.

It is easy to see that, with these coordinates, the Hasse diagram is embedded. Indeed, if $A$ is above $X$ and $B$ is above $Y$ and if the edges $AX$, $BY$ were to cross, then the supports of $A,B$ intersect. So, one of these supports contains the other, say $A$ is below $B$. Then $X$ and $Y$ would both be below $A$ and there would be no edge from $Y$ to $B$.
\end{rem}

\begin{prop}\label{prop: length=weight}
The length of the object $E_i=M_{a_i,b_i}$ (given by $b_i-a_i+1$) is the number of objects $E_j$ which are $\le E_i$ in the partial ordering. In particular, the leaves (the minimal elements) have length 1, i.e., they are simple modules.
\end{prop}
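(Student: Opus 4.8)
The plan is to show the two descriptions of the set $\{E_j \le E_i\}$ — one as a subset of $[n]$ counted by its size, the other as the interval $[a_i,b_i]$ whose size is $b_i - a_i + 1$ — actually coincide, and then read off the consequence for leaves. The key input is Lemma \ref{first lemma}: any two supports are noncrossing. So first I would argue that the supports of the $E_j$ with $E_j \le E_i$ are pairwise disjoint or nested inside $[a_i,b_i]$; since they are also all contained in $[a_i,b_i]$, their union is a subinterval of $[a_i,b_i]$, and I want to show this union is all of $[a_i,b_i]$.

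The cleanest route is to induct on the length $\ell_i = b_i - a_i + 1$. If $\ell_i = 1$ then $E_i$ is the simple module $S_{a_i}$, only $E_i$ itself lies weakly below it, and the count is $1 = \ell_i$. For the inductive step, consider the maximal proper sub-supports among the $[a_j,b_j] \subsetneq [a_i,b_i]$ with $E_j \le E_i$: by Lemma \ref{first lemma} these are pairwise disjoint closed subintervals of $[a_i,b_i]$, and every $E_k < E_i$ has its support nested inside exactly one of them. Applying the induction hypothesis to each such maximal $E_j$ shows the set of $E_k \le E_j$ has size $b_j - a_j + 1$ and fills out $[a_j,b_j]$. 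So the union of all supports of objects strictly below $E_i$ is a disjoint union of subintervals of $[a_i,b_i]$, and adding in $E_i$ gives total count $1 + \sum_j (b_j - a_j + 1)$, where the sum is over the maximal proper ones. To finish I must show these subintervals, together, exhaust $[a_i,b_i]$, i.e. that the complement is empty.

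The main obstacle — and the place the argument really uses that $E_\ast$ is a \emph{complete} exceptional sequence of length exactly $n$ — is precisely this exhaustion claim. One way to get it: suppose some point $c \in [a_i,b_i]$ is not covered by any support of an object $\le E_i$. Then consider objects $E_k$ whose support contains $c$; at least one exists ($E_i$ itself), pick a minimal such $E_k$. I claim $E_k \le E_i$: its support is noncrossing with $[a_i,b_i]$ and meets it at $c$, so it is nested, hence $\le E_i$ — but then $c$ is covered, a contradiction, unless the minimal $E_k$ containing $c$ equals $E_i$, in which case $c$ would have to be covered by $E_i$ anyway. Actually the subtlety is whether there might be a gap at $c$ with no object below $E_i$ using it; the fix is a counting argument: the $n$ intervals $[a_j,b_j]$, being pairwise noncrossing, form a laminar family on $[n]$, and a laminar family of $n$ intervals on $[n]$ in which the maximal intervals partition $[n]$ forces each interval of length $\ell$ to contain exactly $\ell$ members of the family (this is the standard fact that a full laminar family on $n$ points has $n$ members and is ``saturated''). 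I would state and prove this small combinatorial lemma about laminar families first, then apply it: the support intervals of a complete exceptional sequence form such a saturated laminar family because there are $n$ of them, they are noncrossing by Lemma \ref{first lemma}, and the partial order they define is exactly the forest of Proposition \ref{prop: Hasse diagram is a rooted forest}. Hence $\#\{j : E_j \le E_i\} = b_i - a_i + 1$. The statement about leaves is then immediate: a leaf is a minimal element, so the only $E_j \le E_i$ is $E_i$ itself, giving $b_i - a_i + 1 = 1$, i.e. $E_i = S_{a_i}$.
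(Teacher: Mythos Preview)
Your inductive setup is sound, and you correctly identify that the crux is the ``exhaustion'' claim. You also correctly recognise that your first contradiction argument is circular. The problem is that your fallback---the ``small combinatorial lemma about laminar families''---is false as stated, so the proof has a genuine gap.

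Here is a counterexample. Take $n=4$ and the family of intervals $\{[1,4],[1,2],[3,4],[1,1]\}$. These are four distinct pairwise noncrossing intervals on $[4]$, the unique maximal interval $[1,4]$ partitions $[4]$, and the Hasse diagram is a perfectly good rooted forest. Yet the interval $[3,4]$ has length $2$ but contains only one member of the family (itself). So ``$n$ noncrossing intervals on $[n]$ with maximal intervals partitioning $[n]$'' does \emph{not} force each interval of length $\ell$ to contain exactly $\ell$ members. The noncrossing/laminar structure alone is too weak; this particular family of intervals does not arise from any exceptional sequence (one can check that $M_{14},M_{12},M_{34},M_{11}$ cannot be ordered into one, since $M_{34}$ must precede $M_{14}$ while $M_{14}$ must precede $M_{12}$ which must precede $M_{34}$), but nothing in your argument distinguishes it from families that do.

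What is missing is a use of completeness beyond the mere count $n$. The paper's proof supplies exactly this via linear algebra: the dimension vectors $\undim E_j$ form a basis of $\RR^n$, so the projection $\pi:\RR^n\to\RR^k$ onto the coordinates in $[a_i,b_i]$ is surjective. Every $E_j$ with support not contained in $[a_i,b_i]$ has $\pi(\undim E_j)$ equal to either $0$ or $(1,\dots,1)$ (by noncrossing), so these contribute at most a line to the image; hence the $E_j$ with support inside $[a_i,b_i]$ must already span $\RR^k$, giving at least $k$ of them, and there are at most $k$ since their dimension vectors are linearly independent in $\RR^k$. This is where the exceptional sequence hypothesis does real work that your combinatorial lemma cannot replace.
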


Given a rooted labeled tree, define the \emph{weight} of any node $v_i$ to be the number of nodes which are $\le v_i$. The statement of the proposition is that the weight of $v_i$ as a node in the Hasse diagram is equal to the length of the corresponding module $E_i$.

\begin{proof}[Proof of Proposition \ref{prop: length=weight}]
We use the fact that the dimension vectors $\undim E_i$ form a vector space basis for $\RR^n$. (See for example, \cite[Proposition 2.7]{IT13}.) Suppose that $E_i=M_{ab}$ with length $k=b-a+1$. Take the linear map $\pi:\RR^n\onto \RR^{k}$ given by projection onto coordinates $a$ through $b$. Then $\undim E_i$ maps to the vector $(1,1,\cdots,1)$. For any $E_j$ whose support is not contained in the interval $[a,b]$ we have by Lemma \ref{first lemma} that the support of $E_j$ is either disjoint from the interval $[a,b]$ or it contains that interval. In either case, $\pi(\undim E_j)$ is a scalar multiple of $\undim E_i$. Therefore, the image of $\pi$ is spanned by $\pi(\undim E_j)$ for those $E_j$ with support in $[a,b]$. So, there must be at least $k$ of these. However, there are at most $k$ linearly independent vectors with support in $[a,b]$. Therefore, the number of $E_j$ with support in $[a,b]$ is exactly $k$, the length of $E_i$. So, the length of $E_i$ is the weight of $v_i$ as claimed.
\end{proof}

Proposition \ref{prop: Hasse diagram is a rooted forest} shows that every complete exceptional sequence gives rise to a rooted labeled forest, i.e., we obtain a mapping $H$
\[
	\left\{
		\text{complete exceptional sequences for $A_n$}
	\right\}
		\xrightarrow{H}
	\left\{
		\text{rooted labeled forests on $n$}
	\right\}.
\]
We will show that this map is a bijection by constructing an inverse. Since the two sets have the same cardinality it will suffice to construct a right inverse to $H$. So, for every rooted labeled forest on $n$ we will construct the unique exceptional sequence having that forest as Hasse diagram. We will not prove uniqueness directly. Uniqueness follows from the counting argument.

\subsection{The exceptional sequence of a rooted labeled forests}

We give an explicit construction of the necessarily unique complete exceptional sequence for any rooted labeled forest. The construction will be recursive. 

For $1\le i\le n$, let $v_i$ be the node labeled $i$ in the forest $F$. Let $T_i$ be the tree having root $v_i$, i.e., $T_i$ consists of all nodes $v_j\le v_i$ in $F$. Then $v_j\le v_i$ if and only if $T_j\subset T_i$.

First, consider the connected case, i.e., $F$ is a tree. Let $v_r$ be the root of $F$, i.e., $F=T_r$. Let $v_{i_1},v_{i_2},\cdots,v_{i_k}$ be the children of the root $v_r$. Since the subscripts $i_j$ are distinct integers different from $r$ we may choose the labels so that
\[
	i_1<i_2<\cdots<i_p<r<i_{p+1}<\cdots<i_k
\]
where $0\le p\le k$. Recall that the weight $w(v_i)$ is the size of $T_i$ by definition. In particular, the weight of $v_r$ is $n$ and \[
	n=1+\sum w(v_{i_j})
\]

We assign the module $E_r:=M_{1n}$ to $v_r$. Since $M_{1n}$ is uniserial, it has a filtration with $k+1$ subquotients, one of length 1 and the others of length $w(v_{i_j})$. Denote the filtration:
\[
0=A_{p+1}\subset A_p\subset A_{p-1}\subset\cdots\subset A_2\subset A_1\subset B_k\subset \cdots\subset B_{p+1}\subset B_p=M_{1n}
\]
We may choose these so that $A_x/A_{x+1}$ has length $w(v_{i_x})$ when $x\le p$ and $B_{y-1}/B_y$ has length $w(v_{i_y})$ for $y\ge p+1$. (This makes $B_k/A_1$ simple, of length 1.) Then we let
\[
E_{i_x}=A_x/A_{x+1},\qquad E_{i_y}=B_{y-1}/B_y.
\]

\begin{prop}\label{prop: children of the root}
$(E_{i_1},\cdots,E_{i_p}, E_r, E_{i_{p+1}},\cdots,E_{i_k})$ is an exceptional sequence.
\end{prop}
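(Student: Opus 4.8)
The plan is to verify the two defining conditions of an exceptional sequence directly for the given ordering, using the explicit description of the modules $E_{i_x} = A_x/A_{x-1}$ and $E_{i_y} = B_{y-1}/B_y$ as consecutive subquotients of a filtration of the uniserial module $M_{1n}$. First I would record the supports of all the objects involved. Since $M_{1n}$ is uniserial with top $S_1$ and socle $S_n$, its submodules are exactly the $M_{1c}$ and its quotients are exactly the $M_{c,n}$; more generally a subquotient $B_{y-1}/B_y$ or $A_x/A_{x-1}$ is of the form $M_{cd}$ for a subinterval $[c,d]\subseteq[1,n]$, and the intervals of the $k+1$ subquotients tile $[1,n]$ from left to right in the order $E_{i_1},E_{i_2},\dots,E_{i_p},E_r',\dots$ wait—more precisely: the filtration is arranged so that $E_{i_1}=A_1/A_0$ sits at the top (support an initial interval $[1,\,\ast]$), then $E_{i_2}$ below it, and so on down through $E_{i_p}$, then the length-one "middle" piece, then $E_{i_k},\dots,E_{i_{p+1}}$ going down to the socle. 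So the tops $a_{i_x}$ are strictly increasing in $x$ for $x\le p$, all are $\le$ the top of the middle simple piece, and for $y\ge p+1$ the socles $b_{i_y}$ are strictly decreasing in $y$.

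Next I would check the $\Ext$ vanishing. For any pair $E_c$ before $E_d$ in the listed order with $c,d$ both among the children, or one of them $= r$: since all the $E_{i_j}$ and $E_r=M_{1n}$ have supports that are nested or equal (they all lie inside $[1,n]$ and the children's supports are pairwise disjoint intervals), Lemma \ref{first lemma}'s noncrossing picture applies, and $\Ext(M_{ab},M_{cd})\ne 0$ only when the intervals cross. Disjoint or nested intervals never give a nonzero $\Ext$ in either direction for type $A$ with straight orientation—actually one must be slightly careful: $\Ext(M_{ab},M_{cd})\ne0$ iff $c=b+1$ (the intervals are "adjacent" in the right way) or more generally the supports abut with the second immediately to the left; so I would check that no such adjacency occurs in the wrong order. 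Here the relevant fact is that for the children listed $E_{i_1},\dots,E_{i_p}$ the supports go top-down, so if $E_{i_x}$ precedes $E_{i_{x'}}$ ($x<x'$) then $\text{supp}(E_{i_x})$ lies strictly \emph{above} (to the left of) $\text{supp}(E_{i_{x'}})$, meaning $b_{i_x} < a_{i_{x'}}$, i.e. there could be an extension from $E_{i_{x'}}$ up to $E_{i_x}$ but not the reverse—so $\Ext(E_{i_{x'}}, E_{i_x})$ might be nonzero, but that is allowed since $E_{i_x}$ comes \emph{first}. The direction matters and works out. For $E_r=M_{1n}$ against any child: $E_r$ contains every child's support, so again no crossing, and since $E_r$ has support all of $[1,n]$ it cannot abut anything inside $[1,n]$; the required vanishing $\Ext(E_{i_y},E_r)=0$ for $y\le p$ (children before $E_r$)... hmm, wait, the children before $E_r$ are $E_{i_1},\dots,E_{i_p}$, so I need $\Hom(E_r,E_{i_x})=\Ext(E_r,E_{i_x})=0$ for $x\le p$ and $\Hom(E_{i_y},E_r)=\Ext(E_{i_y},E_r)=0$ for $y\ge p+1$. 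Since $E_{i_x}$ is a quotient of $M_{1n}=E_r$ when... no: the $A_x$ are submodules, so $E_{i_x}=A_x/A_{x-1}$ is a subquotient, and $\Hom(M_{1n}, M_{cd})=0$ whenever $[c,d]$ is a proper subinterval not containing $1$ as... actually $\Hom(M_{1n},M_{cd})\ne0$ iff $M_{cd}$ is a quotient of $M_{1n}$ iff $c=1$. For $x\le p$ the piece $E_{i_x}=A_x/A_{x-1}$ has support $[c,d]$ with $c\ge 2$ (it's strictly below the top piece $E_{i_1}$ for $x\ge2$, and for $x=1$, $E_{i_1}=A_1$ has support $[1,w(v_{i_1})]$—oh, then $\Hom(E_r,E_{i_1})$ could be nonzero!). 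So the top child $E_{i_1}$ needs care: $E_{i_1}=A_1/A_0 = A_1$, a submodule of $M_{1n}$, with support $[1,m]$ where $m=w(v_{i_1})$, hence it is \emph{not} a quotient, so $\Hom(M_{1n},M_{1m})=0$ after all (quotients of $M_{1n}$ are $M_{c,n}$, not $M_{1,m}$). Good—I was overcomplicating. So the Hom and Ext vanishing for $E_r$ vs. its children reduces to the uniserial structure: a child appearing before $E_r$ is a submodule-subquotient (an $A_x/A_{x-1}$), hence has support not of the form $[c,n]$, hence receives no map from $M_{1n}$ and, being "to the left", admits no extension from $M_{1n}$ either; dually for children after $E_r$.

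Finally I would assemble the $\Hom$-vanishing among the children themselves. For $x<x'\le p$: $E_{i_x}$ sits above $E_{i_{x'}}$ in the filtration with disjoint supports, $b_{i_x}<a_{i_{x'}}$, so $\Hom(E_{i_{x'}},E_{i_x})=0$ trivially (disjoint supports). For $p+1\le y<y'$: similarly disjoint supports. For $x\le p < y$: the support of $E_{i_x}$ is above that of $E_{i_y}$, also disjoint, and $E_{i_x}$ comes before $E_{i_y}$, so we need $\Hom(E_{i_y},E_{i_x})=0$ and $\Ext(E_{i_y},E_{i_x})=0$; disjointness kills the Hom, and for the Ext we need that $\text{supp}(E_{i_y})$ does not immediately abut $\text{supp}(E_{i_x})$ on its left—but there is always the middle length-one simple $B_k/A_1$ strictly between them, so they are \emph{not} adjacent and $\Ext=0$. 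That middle simple is precisely what the parenthetical "(This makes $B_k/A_1$ simple, of length 1.)" is arranging, and I expect this separation to be the one genuinely load-bearing point. So the only remaining subtlety is the interaction of each child with $E_r$ on the "crossing-but-nested" side, already handled above. In short: every pair of objects in the list has nested or disjoint support (Lemma \ref{first lemma}'s conclusion, here built in by construction), $\Hom$ in the forbidden direction vanishes because a submodule-subquotient of $M_{1n}$ is never a quotient and disjoint supports have no maps, and $\Ext$ in the forbidden direction vanishes because the relevant intervals are either nested (and $M_{1n}$, having full support, abuts nothing internally) or separated by the middle simple piece.

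The step I expect to be the main obstacle is the bookkeeping of which direction the potential nonzero $\Ext$'s point: one must confirm that in the ordering $(E_{i_1},\dots,E_{i_p},E_r,E_{i_{p+1}},\dots,E_{i_k})$ every nonzero $\Ext(M_{ab},M_{cd})$ between two of these modules has the module $M_{ab}$ occurring \emph{later} than $M_{cd}$ — equivalently, that reading the filtration of $M_{1n}$ "from the top down through the $A$'s, then from the bottom up through the $B$'s" lists quotient-type pieces in an order compatible with the exceptional-sequence convention $\Ext(E_j,E_i)=0$ for $i<j$. This is exactly why the $A$-part is listed in increasing order of label while the $B$-part is listed in decreasing order, and checking that this single combinatorial choice makes all the $\Ext$'s point the right way is the crux; everything else is the uniserial-module dictionary plus disjointness.
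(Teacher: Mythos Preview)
Your overall strategy---pairwise $\Hom$/$\Ext$ checking via supports---matches the paper's, and you correctly single out the length-one gap $B_k/A_1$ as the load-bearing piece for the cross-side case (this is the paper's Case~3). However, you have the submodule/quotient orientation of $M_{1n}$ reversed: for the orientation $1\to2\to\cdots\to n$ the socle is $S_n$, so submodules of $M_{1n}$ are the $M_{c,n}$ and quotients are the $M_{1,c}$, not the other way around as you write twice. Consequently the $E_{i_x}$ ($x\le p$), coming from the $A$-part of the filtration at the socle end, have supports \emph{not} containing $1$, while the $E_{i_y}$ ($y>p$) have supports not containing $n$; your picture of $E_{i_1}$ with support $[1,\ast]$ is backwards. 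This matters for the $E_r$-versus-child cases: the paper argues that $E_{i_x}$ is not a quotient of $E_r$ (hence $\Hom(E_r,E_{i_x})=0$) and that $n\notin\supp E_{i_y}$ (hence $\Hom(E_{i_y},E_r)=0$), and then kills both $\Ext$'s at once by noting that $M_{1n}$ is simultaneously projective and injective---cleaner than your ``cannot abut anything inside $[1,n]$'' heuristic.

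You also have the $\Ext$ direction backward in the same-side case $x<x'\le p$: you write that ``$\Ext(E_{i_{x'}}, E_{i_x})$ might be nonzero, but that is allowed since $E_{i_x}$ comes first,'' but when $E_{i_x}$ precedes $E_{i_{x'}}$ it is precisely $\Ext(E_{i_{x'}}, E_{i_x})$ (later $\to$ earlier) that must vanish for an exceptional sequence. Fortunately the mathematics is on your side: for consecutive supports with $E_{i_x}$ to the left of $E_{i_{x'}}$, the only nonzero $\Ext$ is $\Ext(E_{i_x},E_{i_{x'}})$, which is the permitted (earlier $\to$ later) direction. The paper phrases this as ``in any extension, $Y$ would be the submodule,'' forcing $\Ext(Y,X)=0$. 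So your conclusion is correct but the stated justification is not; since you get the convention right later for the cross-side pair $x\le p<y$, this appears to be a slip rather than a systematic misunderstanding.
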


\begin{proof}
It suffices to show that $(X,Y)$ is an exceptional pair for any subsequence $X,Y$ of length 2.

Case 1: $X=E_r$. Then $Y=E_{i_y}$ for $y>p$ which has no morphism to $E_r$ since its support does not contain $n$ by construction. $\Ext(Y,E_r)=0$ since $E_r$ is injective.

Case 2: $Y =E_r$, Then $X=E_{i_x}$ for $x\le p$. These cannot be quotients of $Y$. Hence there is no morphism $Y\to X$ and $\Ext(E_r,X)=0$ since $E_r$ is projective.

For the remaining pairs, there is no morphism since they have disjoint supports.

Case 3: $X=E_{i_x}$ where $x\le p$ and $Y=E_{i_y}$ where $y\ge p+1$. Then the supports of $X,Y$ are disjoint and not consecutive. Therefore there is no morphism and no extension.

In the remaining cases, $X,Y$ are on the same side of $E_r$. So, their supports are disjoint and $\Ext(Y,X)=0$ since, in any extension, $Y$ would be the submodule.
\end{proof}

We come to the recursion step. Take any subtree of $F$ with root $v_s$. Suppose the corresponding module has been constructed, say $E_s=M_{ab}$ with length $b-a+1=w(v_s)$. In particular $E_s$ will be simple when $v_s$ is a leaf. When $v_s$ is not a leaf, we repeat the construction above for the children $v_{j_z}$ of $v_s$. This will assign modules $E_{j_z}$ of length equal to the weight $w(v_{j_z})$ having supports which are closed disjoint subintervals of $[a,b]$ with union $[a,b]$ minus one point which we call the ``gap''. (In the construction above, the gap is the support of the simple module $B_k/A_1$.) Continuing in this way we construct all modules $E_t$ in the case when $F$ is connected.

Now consider the case when $F$ is a disjoint union of trees $T_1,\cdots,T_k$ with roots $v_{j_1},\cdots,v_{j_k}$ where $j_1<j_2<\cdots<j_k$. Then the weights $w(v_{j_z})$ add up to $n$. Let $E_{j_1},\cdots,E_{j_k}$ be the modules with disjoint supports of length $w(v_{j_z})$ which are consecutive. For instance $E_{j_1}=M_{1x}$ where $x=w(v_{j_1})$, $E_{j_2}=M_{x+1,y}$ where $y=x+w(v_{j_2})$, etc.

\begin{prop}\label{prop: F disconnected case}
$(E_{j_1},\cdots,E_{j_k})$ is an exceptional sequence.
\end{prop}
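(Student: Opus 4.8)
The plan is to reduce this statement to the connected case (Proposition \ref{prop: children of the root}) by observing that the disconnected forest $F = T_1 \sqcup \cdots \sqcup T_k$ with roots $v_{j_1}, \dots, v_{j_k}$ is exactly the structure we would obtain by adjoining a single virtual root $v_0$ above all the $v_{j_z}$, as in Definition \ref{def: rooted labeled forest}. Under that identification, the modules $E_{j_z}$ we constructed are precisely the subquotients of the uniserial module $M_{1n}$ in a filtration where every $B_{y-1}/B_y$ has length $w(v_{j_y})$ and there is no ``gap'' (equivalently, $p = k$ in the notation of Proposition \ref{prop: children of the root}, since all the roots $v_{j_z}$ play the role of children $v_{i_x}$ with $i_x < r$, the missing simple subquotient $B_k/A_1$ being absorbed into the absent virtual root). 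So the content is really the same verification, just without $E_r$ in the sequence.

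Concretely, I would argue directly: it suffices to check that $(X,Y)$ is an exceptional pair for every length-two subsequence $X = E_{j_x}$, $Y = E_{j_y}$ with $x < y$. By construction the supports of $E_{j_x}$ and $E_{j_y}$ are disjoint closed subintervals of $[1,n]$, so there is no nonzero morphism in either direction; in particular $\Hom(E_{j_y}, E_{j_x}) = 0$. For the Ext vanishing, note that $E_{j_x} = M_{a,b}$ and $E_{j_y} = M_{c,d}$ with $b < c$ (the interval of $E_{j_x}$ lies strictly to the left of that of $E_{j_y}$, since $x < y$ and the supports are consecutive in increasing order). In any short exact sequence $0 \to E_{j_x} \to Z \to E_{j_y} \to 0$ the submodule $E_{j_x}$ would have to contain the socle, but the supports being disjoint and non-adjacent-in-the-wrong-direction forces the extension to split — more precisely $\Ext(M_{c,d}, M_{a,b}) = 0$ whenever $b < c$, and also whenever the intervals are disjoint with $d < a$, which cannot happen here. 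Hence $(E_{j_x}, E_{j_y})$ is exceptional, and since each $E_{j_z}$ is indecomposable, $(E_{j_1}, \dots, E_{j_k})$ is an exceptional sequence.

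The only subtlety — and I expect it to be the main thing to get right rather than a genuine obstacle — is the direction of the Ext vanishing: we need $\Ext(E_{j_y}, E_{j_x}) = 0$ (the later object extended by the earlier one), and this relies on the fact that the intervals are arranged so that $E_{j_x}$ sits to the \emph{left} of $E_{j_y}$, hence $E_{j_x}$ appears as a submodule in any extension with $E_{j_y}$ on top, which is the non-extendable configuration for the straight orientation $1 \to 2 \to \cdots \to n$. This is exactly the phenomenon already used in the final paragraph of the proof of Proposition \ref{prop: children of the root} (``their supports are disjoint and $\Ext(Y,X)=0$ since, in any extension, $Y$ would be the submodule''), so I would simply invoke that same reasoning. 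Uniqueness of the exceptional sequence with $F$ as Hasse diagram is not proved here; it will follow, as the authors note, from the counting argument once surjectivity of $H$ is established for all forests.
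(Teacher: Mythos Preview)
Your direct argument in the second paragraph is correct and matches the paper's proof exactly: the paper's proof is two sentences---disjoint supports give $\Hom=0$, and for $x<y$ one has $\Ext(E_{j_y},E_{j_x})=0$ because in any nonsplit extension $E_{j_y}$ (whose support lies to the right) would have to be the submodule---which is precisely the reasoning you spell out and correctly flag as the same phenomenon used at the end of Proposition~\ref{prop: children of the root}. Your virtual-root framing in the first paragraph is a pleasant way to see why the argument parallels the connected case, but the paper does not invoke it and simply gives the direct check.
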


\begin{proof}
There are no morphisms between these objects since their supports are disjoint. For any $x<y$, $\Ext (E_{j_y},E_{j_x})=0$ since in any extension, $E_{j_y}$ will be the submodule. 
\end{proof}

\begin{rem}\label{rem 2.10: consecutive supports}
We say that a sequence of modules $X_1,\cdots,X_k$ have \emph{consecutive supports} if $X_i=M_{a_i,b_i}$ where each $a_i=b_{i-1}+1$. For example, the modules in Proposition \ref{prop: F disconnected case} have consecutive supports. Modules with consecutive support  form an exceptional sequence.
\end{rem}

We list the basic properties of the modules $E_i$ corresponding to the nodes $v_i$ under the construction above.

\begin{prop}\label{prop: properties of Ei}
Given any rooted labeled forest $F$, let $E_i$ be the module corresponding to node $v_i$ by the above recursive construction. Then
\begin{enumerate}
\item Each $E_i$ has length equal to $w(v_i)$.
\item The support of $E_i$ is contained in the support of $E_j$ if and only if $v_i\le v_j$.
\item The supports of $E_i,E_j$ are disjoint if and only if $v_i,v_j$ are not comparable.
\item If $v_{i_1},\cdots,v_{i_k}$ are the children of $v_r$ and $i_1<i_2<\cdots<i_p<r<i_{p+1}<\cdots<i_k$ then 
	\begin{enumerate}
	\item $E_{i_x}$ is a submodule of $E_r$ if and only if $x=p$, i.e., if $v_{i_x}$ is the child of $v_r$ with the largest label less than $r$. (None of the $E_{i_x}$ are submodules of $E_r$ when $p=0$.)
	\item $E_{i_y}$ is a quotient of $E_r$ if and only if $y=p+1$, i.e., if $v_{i_y}$ is the child of $v_r$ with the smallest label greater than $r$. (None of the $E_{i_y}$ are quotients of $E_r$ when $p=k$.)
	\item $E_{i_1},\cdots,E_{i_p}$ have consecutive supports.
	\item $E_{i_{p+1}},\cdots,E_{i_k}$ have consecutive supports.
	\end{enumerate}
\end{enumerate}
\end{prop}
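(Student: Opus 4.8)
The plan is to prove (1)--(4) simultaneously by induction on $n=|F|$, following the two cases of the recursive construction. The base case $n=1$ is immediate: the forest is a single node of weight $1$ and $E_1=S_1$ has length $1$, with no comparabilities to check. I use throughout that the construction and every statement in the list depend on the labelings (of the forest nodes and of the quiver vertices) only through their relative orders, so I may freely apply the inductive hypothesis to a proper subtree --- which has fewer than $n$ vertices --- after relabeling its nodes order-preservingly, and then transport the conclusion into the appropriate subinterval of $[1,n]$ via an order-preserving identification of quiver vertices.

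In the disconnected case $F=T_1\sqcup\cdots\sqcup T_k$ with $k\ge 2$, I would apply the inductive hypothesis to each $T_z$ and translate its supports so that $E_{j_1},\dots,E_{j_k}$ have consecutive supports as prescribed, which yields (1)--(4) for every pair of nodes lying in a common $T_z$. For $v_i\in T_z$ and $v_j\in T_{z'}$ with $z\ne z'$, the supports of $E_i$ and $E_j$ lie in the disjoint blocks assigned to $T_z$ and $T_{z'}$ and are therefore disjoint, while $v_i,v_j$ are incomparable; this is exactly what (2) and (3) assert in this case, and (4) concerns only a node together with its children, all of which lie in one tree.

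The connected case $F=T_r$ carries the real content. The root $v_r$ has children $v_{i_1},\dots,v_{i_k}$ with $k\ge 1$, each $T_{i_j}$ has $w(v_{i_j})\le n-1$ vertices, and $n=1+\sum_j w(v_{i_j})$. By construction $E_r=M_{1n}$ has length $n=w(v_r)$, and the uniserial filtration of $M_{1n}$ fixed just before Proposition \ref{prop: children of the root} realizes the $E_{i_j}$ as subquotients of lengths $w(v_{i_j})$ occupying pairwise disjoint subintervals of $[1,n]$ whose union is $[1,n]$ minus the one-point gap. Since this is a chain of submodules, its bottom term is a submodule of $E_r$ --- this term is $E_{i_p}$ --- and its top quotient $M_{1n}/B_{p+1}$ is $E_{i_{p+1}}$, while the linear order of the subquotients forces $E_{i_1},\dots,E_{i_p}$ to have consecutive supports and likewise $E_{i_{p+1}},\dots,E_{i_k}$, the gap separating the two blocks. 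This establishes (4) and (1) for $v_r$ and (1) for each $E_{i_j}$. For each $j$, the recursion ``repeat the construction for the children of $v_s$'' carried out inside the support of $E_{i_j}$ is precisely the tree-construction for $T_{i_j}$, transported along the order-preserving identification of $A_{w(v_{i_j})}$ with that subinterval; so the inductive hypothesis gives (1)--(4) for $T_{i_j}$ and hence, transported, for all $v_i\le v_{i_j}$ in $F$, and in particular $\supp E_i\subseteq\supp E_{i_j}\subseteq[1,n]=\supp E_r$. Collecting these facts: if $v_i\le v_j$ then $\supp E_i\subseteq\supp E_j$ (either $v_j=v_r$, or both nodes lie in a common $T_{i_j}$), and if $v_i,v_j$ are incomparable then neither is $v_r$ and they lie either in a common $T_{i_j}$, where (3) applies, or in distinct $T_{i_a},T_{i_b}$, whose supports are disjoint by construction, so $\supp E_i\cap\supp E_j=\emptyset$.

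This dichotomy yields (2) and (3) at once: $\supp E_i\subseteq\supp E_j$ makes the supports non-disjoint, hence $v_i,v_j$ comparable, and $v_j<v_i$ is impossible because it would force $\supp E_j\subsetneq\supp E_i$ (the lengths satisfy $w(v_j)<w(v_i)$ by (1)); thus $v_i\le v_j$, which is (2), and (3) is the matching contrapositive pair. I expect the main obstacle to be organizational rather than mathematical: keeping the induction honest, so that at level $n$ the only inputs not supplied by the inductive hypothesis are the root-level facts read directly off the filtration of $M_{1n}$; and checking carefully that (i) an order-preserving relabeling of quiver vertices preserves every item in the list, in particular ``consecutive supports'' and the submodule/quotient relations, (ii) inserting the one-point gap between the two blocks of children does not disturb the consecutiveness claimed within each block, and (iii) when the recursion descends into $T_{i_j}$, the module $E_{i_j}$ playing the role of ``$E_r$'' is only a subquotient of the original $M_{1n}$, which does no harm because statement (4) for a node of $T_{i_j}$ refers only to that node, its children, and their common ancestor module, all of which live inside $\supp E_{i_j}$.
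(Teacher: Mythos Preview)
Your proof is correct and follows essentially the same approach as the paper's. The paper is terser---it dismisses (1) and (4) as immediate from the construction and then handles (2) and (3) by the same dichotomy you use (chain along parent--child edges in the comparable case; pass to the least common ancestor or to distinct root-trees in the incomparable case)---without packaging the argument as a formal induction on $|F|$.
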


\begin{proof}
Property (1) is Proposition \ref{prop: length=weight}. Property (4) follow from Proposition \ref{prop: children of the root}. To prove (2) and (3) we note that, for any $i,j$ there are only two possibilities: either one is $\le$ the other or they are not comparable. In the first case, suppose $v_i<v_j$. Then there is a sequence $v_i=v_{k_0}<v_{k_1}<v_{k_2}<\cdots<v_j$ where each $v_{k_x}$ is a child of $v_{k_{x+1}}$ in that case, $\supp E_{k_x}\subset \supp E_{k_{x+1}}$ by construction. So, $\supp E_i\subset \supp E_j$. In the second case, either $v_i,v_j$ lie in disjoint trees with distinct roots $v_k,v_\ell$ in which case $E_k$ and $E_\ell$ have disjoint supports by Proposition \ref{prop: F disconnected case}. Otherwise, $v_i,v_j$ lie in the same tree. Let $v_k$ be the smallest node $\ge v_i,v_j$. Then $v_i,v_j$ are $\le$ two children of $v_k$ where the corresponding modules have disjoint supports. So, $E_i,E_j$ have disjoint supports.
\end{proof}

\begin{thm}{(Theorem \ref{thm A1})}\label{thm: exceptional sequence of a forest}\label{theorem A1}
 Given any rooted labeled forest $F$, let $E_i$ be the module corresponding to node $v_i$ by the above recursive construction. Then
\[
	(E_1,E_2,\cdots,E_n)
\]
is a complete exceptional sequence.
\end{thm}

To prove this theorem we consider the three cases when $E_i$ is a submodule of $E_j$, $E_i$ is a quotient module of $E_j$ and $E_i,E_j$ extend each other.

\begin{lem}\label{lem1: Ei submodule of Ej}
If $E_i$ is a submodule of $E_j$ and $i\neq j$ then $v_i<v_j$ and the nodes $v_{k_1},\cdots,v_{k_t}$ connecting $v_i$ to $v_j$ in $F$ have increasing labels: $i<k_1<k_2<\cdots<k_t<j$.
\end{lem}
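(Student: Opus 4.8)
The plan is to reduce to the one-step case (where $v_i$ is a child of $v_j$) and then chain it. First I would record the one-step case as the engine of the argument. Suppose $v_i$ is a child of $v_j$. By construction, the children $v_{i_1},\dots,v_{i_k}$ of $v_j$ are assigned the successive subquotients $E_{i_1},\dots,E_{i_p}$ (submodules side) and $E_{i_{p+1}},\dots,E_{i_k}$ (quotients side) of $E_j$, where $i_1<\cdots<i_p<j<i_{p+1}<\cdots<i_k$. Part (4)(a) of Proposition \ref{prop: properties of Ei} says $E_{i_x}$ is a \emph{submodule} of $E_j$ exactly when $x=p$; that is, the unique child whose module is a genuine submodule of $E_j$ is $v_{i_p}$, and $i_p<j$. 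So if $E_i\subset E_j$ with $v_i$ a child of $v_j$, then $i=i_p<j$. This gives the one-step statement $i<j$.

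Next I would run the induction along the path. Suppose $E_i$ is a submodule of $E_j$ with $i\neq j$. By Proposition \ref{prop: properties of Ei}(2), $\supp E_i\subset\supp E_j$ forces $v_i<v_j$, so there is a unique path $v_i=v_{k_0}<v_{k_1}<\cdots<v_{k_t}<v_{k_{t+1}}=v_j$ with each $v_{k_m}$ a child of $v_{k_{m+1}}$. The key point is that ``submodule'' propagates \emph{down} every step of this path: if $E_i\subseteq E_j$ then $E_i\subseteq E_{k_t}$, because $\supp E_i\subseteq\supp E_{k_t}\subseteq\supp E_j$ means $E_i$ is identified with a submodule of the uniserial module $E_j$ whose support is the bottom-most sub-interval, so it lies inside the sub-interval $\supp E_{k_t}$, and since $E_{k_t}$ is realized as the submodule $A_p$-type piece of $E_j$ supported on a bottom sub-interval, $E_i\subseteq E_{k_t}$ as submodules. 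Iterating, $E_{k_{m}}$ is a submodule of $E_{k_{m+1}}$ for every $m=0,\dots,t$. Applying the one-step case to each edge $v_{k_m}\lessdot v_{k_{m+1}}$ gives $k_m<k_{m+1}$ for all $m$, hence $i=k_0<k_1<\cdots<k_t<k_{t+1}=j$, which is exactly the claim.

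The step I expect to be the main obstacle is the propagation claim in the middle of the previous paragraph: verifying that ``$E_i$ is a submodule of $E_j$'' really does imply ``$E_i$ is a submodule of $E_{k_t}$'' rather than merely ``$\supp E_i\subseteq\supp E_{k_t}$.'' This requires using that $E_j$ is uniserial together with the precise recursive construction: among the subquotients of $E_j$ assigned to children, the piece $E_{k_t}$ containing $\supp E_i$ is by construction one of the $A_x$-quotients or $B_y$-quotients, and it is an honest submodule of $E_j$ only along the bottom chain $A_p\subset A_{p-1}\subset\cdots$. So one must check that a submodule of $E_j$ whose support is contained in $\supp E_{k_t}$ forces $v_{k_t}$ to lie on that bottom chain at each recursion level, i.e.\ that at every step we are on the ``submodule side'' ($x=p$ in part (4)(a)). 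Once that bookkeeping is in place, everything else is the routine induction sketched above.
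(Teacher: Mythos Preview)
Your overall strategy---reduce to the one-step case via Proposition \ref{prop: properties of Ei}(4)(a) and then chain along the path from $v_i$ to $v_j$---is exactly the paper's strategy. The gap you flag in your last paragraph is genuine, and your attempted justification in the middle paragraph is circular: you assert that ``$E_{k_t}$ is realized as the submodule $A_p$-type piece of $E_j$,'' but that is precisely the statement to be proved, not an input.

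The paper resolves this cleanly in one line, without any recursive bookkeeping about filtration sides. Write $E_j=M_{ab}$. Since $E_i$ is a submodule of the uniserial module $E_j$, its support is an interval of the form $[c,b]$; in particular $E_i$ contains the socle $S_b$ of $E_j$. Because $\supp E_i\subset\supp E_{k_t}\subset\supp E_j$, the interval $\supp E_{k_t}$ also contains $b$, so $E_{k_t}$ contains the socle of $E_j$ as well. Any subinterval of $[a,b]$ containing $b$ is of the form $[c',b]$, so $E_{k_t}$ is a submodule of $E_j$. Now apply your one-step case to the child $v_{k_t}$ of $v_j$ to get $k_t<j$, and repeat (the same socle argument shows $E_i$ is a submodule of $E_{k_t}$, etc.). This replaces your ``bookkeeping'' with a single observation about socles and makes the propagation step immediate.
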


\begin{proof} Since $v_i<v_{k_t}<v_j$, $\supp E_i\subset \supp E_{k_t}\subset \supp E_j$. Since $E_i$ contains the socle of $E_j$, so does $E_{k_t}$. So, $E_{k_t}$ is a submodule of $E_j$ and $v_{k_t}$ is a child of $v_j$. It follows that the label must be smaller by the construction: $k_t<j$. By the same argument $k_p<k_{p+1}$ for all $p$ and $i<k_1$.
\end{proof}

\begin{lem}\label{lem2: Ei quotient of Ej}
If $E_i$ is a quotient module of $E_j$ and $i\neq j$ then $v_i<v_j$ and the nodes $v_{k_1},\cdots,v_{k_t}$ connecting $v_i$ to $v_j$ in $F$ have decreasing labels: $i>k_1>k_2>\cdots>k_t>j$.
\end{lem}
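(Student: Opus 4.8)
The plan is to mirror the proof of Lemma \ref{lem1: Ei submodule of Ej}, interchanging the roles of submodules and quotients, of socle and top, and of parts (4a) and (4b) of Proposition \ref{prop: properties of Ei}. Recall that, since the quiver has straight orientation, $M_{ab}$ is uniserial with top the simple module at $a$; hence a quotient module of $M_{ab}$ is precisely some $M_{ac}$ with $a\le c\le b$. In particular, if $E_i$ is a quotient of $E_j$ then $\supp E_i\subseteq\supp E_j$ and $E_i$ has the same top (left endpoint of support) as $E_j$.

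First I would observe that $\supp E_i\subseteq\supp E_j$ together with $i\ne j$ forces $v_i<v_j$ by Proposition \ref{prop: properties of Ei}(2). Write the path from $v_i$ to $v_j$ in $F$ as $v_i=v_{k_0},v_{k_1},\dots,v_{k_t},v_{k_{t+1}}=v_j$, where each $v_{k_p}$ is a child of $v_{k_{p+1}}$. Proposition \ref{prop: properties of Ei}(2) gives the chain of supports
\[
	\supp E_i=\supp E_{k_0}\subseteq\supp E_{k_1}\subseteq\cdots\subseteq\supp E_{k_t}\subseteq\supp E_j .
\]
Since $E_i$ is a quotient of $E_j$, the left endpoints of $\supp E_i$ and $\supp E_j$ coincide; being caught between them, every $\supp E_{k_p}$ has that same left endpoint. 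Consequently each $E_{k_p}$ has the same top as $E_{k_{p+1}}$ while $\supp E_{k_p}\subseteq\supp E_{k_{p+1}}$, so $E_{k_p}$ is a quotient module of $E_{k_{p+1}}$ for every $p=0,1,\dots,t$.

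Now I would apply Proposition \ref{prop: properties of Ei}(4b) at each step: $v_{k_p}$ is a child of $v_{k_{p+1}}$ whose associated module is a quotient of $E_{k_{p+1}}$, hence its label is larger than that of its parent, i.e. $k_p>k_{p+1}$. Letting $p$ run from $0$ to $t$ yields $i=k_0>k_1>k_2>\cdots>k_t>k_{t+1}=j$, as claimed.

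I do not anticipate a genuine obstacle; the statement is the formal dual of Lemma \ref{lem1: Ei submodule of Ej}, and the only point requiring care is the observation that a quotient relation between $E_i$ and $E_j$ propagates down the entire connecting path — because the nested supports are all pinned to a common left endpoint — which is exactly what licenses the repeated use of part (4b).
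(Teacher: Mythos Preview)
Your proof is correct and matches the approach the paper intends: the paper simply states that this lemma is dual to Lemma~\ref{lem1: Ei submodule of Ej} with an analogous proof, and your argument is precisely that dualization---replacing socle by top, submodule by quotient, and invoking Proposition~\ref{prop: properties of Ei}(4b) in place of (4a) at each step along the path from $v_i$ to $v_j$.
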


\begin{proof}
This statement is dual to Lemma \ref{lem1: Ei submodule of Ej} and has an analogous proof.
\end{proof}

\begin{lem}\label{lem3: Ei, Ej consecutive}
$\Ext(E_i,E_j)\neq0$ if and only if $E_i,E_j$ have (disjoint) consecutive supports and, in that case, $E_i$ is a submodule of some $E_k$ (which may be $E_i$) and $E_j$ is a quotient module of some $E_\ell$ (which may be $E_j$) where $v_k,v_\ell$ are sibling or they are both roots and
\[
	i\le k<\ell \le j.
\]
\end{lem}

\begin{proof}
Let $\Ext(E_i,E_j)\neq0$. Then either the supports are consecutive or they overlap. But the second case is excluded by Proposition \ref{prop: properties of Ei} (2), (3). So, $E_i,E_j$ have consecutive supports. The converse is clear.

Suppose the supports of $E_i,E_j$ are consecutive, say $\supp E_i=[a,b]$, $\supp E_j=[b+1,c]$. Let $v_k$ be maximal in $F$ so that $v_i\le v_k$ and $v_j\not\le v_k$. Then $E_k$ contains the support of $E_i$ and is disjoint from the support of $E_j$. So, $b\in\supp E_k$ and $b+1\notin \supp E_k$ making $E_i$ a submodule of $E_k$. By Lemma \ref{lem1: Ei submodule of Ej}, $i\le k$.

Similarly, let $v_\ell$ be maximal in $F$ so that $v_j\le v_\ell$ and $v_i\not\le v_\ell$. Then $\ell\le j$ by Lemma \ref{lem2: Ei quotient of Ej}. By maximality of $v_k,v_\ell$, they must be sibling (or roots of disjoint trees). In either case, $k<\ell$ since $E_k,E_\ell$ have consecutive supports. (This follows from either Proposition \ref{prop: properties of Ei}(4) or Proposition \ref{prop: F disconnected case}.) This proves the lemma.
\end{proof}

\begin{proof}[Proof of Theorem \ref{thm: exceptional sequence of a forest}]
These lemmas imply that $(E_1,\cdots,E_n)$ is a complete exceptional sequence. Indeed, if $i<j$ we have $\Hom(E_j,E_i)=0$ since $E_j$ is not a submodule of $E_i$ by Lemma \ref{lem1: Ei submodule of Ej} and $E_i$ is not a quotient module of $E_j$ by Lemma \ref{lem2: Ei quotient of Ej}. Also, $\Ext(E_j,E_i)=0$ by Lemma \ref{lem3: Ei, Ej consecutive}.
\end{proof}

\begin{eg}\label{eg: rooted forest for A9}
It is easy to convert a rooted labeled forest into a complete exceptional sequence. Start with the following.
\begin{center}
\begin{tikzpicture}
\coordinate (A3) at (5.5,2);
\coordinate (A1) at (5.5,1);
\coordinate (A8) at (5.5,0);
\coordinate (B2) at (2,1);
\coordinate (B4) at (3,1);
\coordinate (B6) at (4,1);
\coordinate (B7) at (2.5,0);
\coordinate (B9) at (3.5,0);
\coordinate (B5) at (3.5,2);

\draw[thick] (A8)--(A1)--(A3);
\draw[thick] (B2)--(B5)--(B4)--(B7) (B9)--(B4) (B6)--(B5);
\draw[white,fill] (A1) circle[radius=3mm];
\draw (A1) circle[radius=3mm];
\draw (A1) node{1};
\draw[white,fill] (A3) circle[radius=3mm];
\draw (A3) circle[radius=3mm];
\draw (A3) node{3};
\draw[white,fill] (B2) circle[radius=3mm];
\draw (B2) circle[radius=3mm];
\draw (B2) node{2};
\draw[white,fill] (B4) circle[radius=3mm];
\draw (B4) circle[radius=3mm];
\draw (B4) node{4};
\draw[white,fill] (B5) circle[radius=3mm];
\draw (B5) circle[radius=3mm];
\draw (B5) node{5};
\draw[white,fill] (B6) circle[radius=3mm];
\draw (B6) circle[radius=3mm];
\draw (B6) node{6};
\draw[white,fill] (B7) circle[radius=3mm];
\draw (B7) circle[radius=3mm];
\draw (B7) node{7};
\draw[white,fill] (A8) circle[radius=3mm];
\draw (A8) circle[radius=3mm];
\draw (A8) node{8};
\draw[white,fill] (B9) circle[radius=3mm];
\draw (B9) circle[radius=3mm];
\draw (B9) node{9};
\end{tikzpicture}
\end{center}

By Remark \ref{rem 2.10: consecutive supports}, the modules $E_3,E_5$ corresponding to the roots $v_3,v_5$ have consecutive supports. By Proposition \ref{prop: properties of Ei} (1), these supports have lengths 3,6. So, $E_3=M_{13}$ and $E_5=M_{49}$. Since $1<3$, $E_1$ is the length 2 submodule of $E_3$. So, $E_1=M_{23}$. By Proposition \ref{prop: properties of Ei}(4), $E_6$ has support at $4$ followed by a gap, then $E_2,E_4$ have consecutive supports in the interval $[6,9]$. So, $E_6=S_4$, $E_2=S_6$ and $E_4=M_{79}$. The children of $v_4$ are leaves with larger labels, so the corresponding modules are simples of injective type: $E_7=S_7$, $E_9=S_8$. Similarly, $E_8=S_2$. The complete exceptional sequence is
\[
	E_\ast=(M_{23},S_6,M_{13}, M_{79}, M_{49}, S_4, S_7, S_2, S_8)
\]
This same example is done using chord diagrams following \cite{GY} in Figure \ref{fig: Goulden-Yong}.
\end{eg}

\subsection{Relatively projective and injective objects}\label{ss: rel proj and rel inj}

\begin{defn}\label{def: relatively projective and perp cat}
An object $E$ of a subcategory $\cC$ of $mod$-$\Lambda$ is called \emph{relatively projective}, respectively \emph{relatively injective} if it is a projective, respectively injective, object of the subcategory $\cC$. The \emph{right perpendicular category} $Z^\perp$ of any $\Lambda$-module $Z$ is defined to be the full subcategory of $mod$-$\Lambda$ of all modules $Y$ so that
\[
	\Hom_\Lambda(Z,Y)=0=\Ext^1_\Lambda(Z,Y).
\]
Similarly, the \emph{left perpendicular category} $^\perp Z$ of any $\Lambda$-module $Z$ is defined to be the full subcategory of $mod$-$\Lambda$ of all modules $X$ so that
\[
	\Hom_\Lambda(X,Z)=0=\Ext^1_\Lambda(X,Z).
\]

By abuse of terminology, we call an object $E_i$ in a complete exceptional sequence $(E_1,\cdots,E_n)$ \emph{relatively projective} if it is a projective object of the \emph{right perpendicular} category $Z^\perp$ of $Z=E_{i+1}\oplus \cdots\oplus E_n$. Similarly, we call an object $E_i$ in a complete exceptional sequence $(E_1,\cdots,E_n)$ \emph{relatively injective} if it is an injective object of the \emph{left perpendicular} category $^\perp Z$ of $Z=E_1\oplus \cdots\oplus E_{i-1}$. 
\end{defn}

\begin{prop}\label{prop: perpendicular category is abelian}
If any two of the objects in a short exact sequence $0\to A\to B\to C\to0$ lie in $Z^\perp$ (or $\,^\perp Z$) then so does the third.
\end{prop}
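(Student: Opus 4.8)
The plan is to feed the short exact sequence $0\to A\to B\to C\to 0$ into the long exact sequences coming from the functors $\Hom_\Lambda(Z,-)$ and $\Ext_\Lambda(Z,-)$, and to use that $\Lambda=\kk A_n$ is hereditary, so $\Ext^2_\Lambda=0$. Concretely, hereditariness makes the six-term sequence
\[
0\to \Hom(Z,A)\to\Hom(Z,B)\to\Hom(Z,C)\to\Ext(Z,A)\to\Ext(Z,B)\to\Ext(Z,C)\to 0
\]
exact. The key reformulation is that, by definition, an object $Y$ lies in $Z^\perp$ if and only if \emph{both} $\Hom(Z,Y)$ and $\Ext(Z,Y)$ vanish.

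With this in hand the proof is a short bookkeeping argument in three cases. If two of $A,B,C$ lie in $Z^\perp$, then four of the six terms above are zero, positioned so that exactness forces the remaining two to vanish as well. First I would treat (i) $A,B\in Z^\perp$: then $\Hom(Z,C)$ embeds into $\Ext(Z,A)=0$ and $\Ext(Z,C)$ is a quotient of $\Ext(Z,B)=0$, so $C\in Z^\perp$. Next (ii) $A,C\in Z^\perp$: then $\Hom(Z,B)$ is squeezed between $\Hom(Z,A)=0$ and $\Hom(Z,C)=0$, and likewise $\Ext(Z,B)$ between $\Ext(Z,A)=0$ and $\Ext(Z,C)=0$, so $B\in Z^\perp$. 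Finally (iii) $B,C\in Z^\perp$: then $\Hom(Z,A)$ embeds into $\Hom(Z,B)=0$, and $\Ext(Z,A)$ sits between $\Hom(Z,C)=0$ and $\Ext(Z,B)=0$, so $A\in Z^\perp$.

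For the statement about $\,^\perp Z$ I would run the identical argument with the contravariant functors $\Hom_\Lambda(-,Z)$ and $\Ext_\Lambda(-,Z)$, whose six-term exact sequence (again using $\Ext^2_\Lambda=0$) is
\[
0\to \Hom(C,Z)\to\Hom(B,Z)\to\Hom(A,Z)\to\Ext(C,Z)\to\Ext(B,Z)\to\Ext(A,Z)\to 0,
\]
and the same three-case check applies verbatim (with $A$ and $C$ interchanged in the roles of "first" and "last"). Here $Z$ denotes $E_1\oplus\cdots\oplus E_{i-1}$ rather than $E_{i+1}\oplus\cdots\oplus E_n$, but nothing else changes.

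I do not expect a genuine obstacle: the only point requiring care is that the two outer zeros in each six-term sequence depend on $\Lambda$ being hereditary, so this hypothesis (implicit because $\Lambda$ is a path algebra of a Dynkin quiver) must be invoked. Everything else is a diagram chase. One can also package the whole argument conceptually by noting that $Z^\perp$ is the common kernel of the half-exact functors $\Hom(Z,-)$ and $\Ext(Z,-)$ linked by the above connecting sequence, for which the "two-out-of-three" closure property is automatic; I would include the case-by-case version for transparency.
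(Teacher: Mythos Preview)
Your proof is correct and follows essentially the same approach as the paper: both invoke the six-term exact sequence obtained by applying $\Hom_\Lambda(Z,-)$ (resp.\ $\Hom_\Lambda(-,Z)$) to the short exact sequence, using hereditariness to terminate at $\Ext^1$. The paper states this more tersely, leaving the three-case check implicit, whereas you spell it out; your explicit mention of the hereditary hypothesis is a welcome clarification.
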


\begin{proof}
This follows from the $6$-term exact sequence:
\[
\begin{split}
	0\to &\Hom(Z,A)\to \Hom(Z,B)\to \Hom(Z,C) \\
	&\to \Ext(Z,A)\to \Ext(Z,B)\to \Ext(Z,C)\to 0.
	\end{split}
\]
An analogous argument works for $\,^\perp Z$.
\end{proof}

Since $X\in Y^\perp$ if and only if $Y\in \,^\perp X$, we get the following.

\begin{cor}\label{cor: A+B perp is contained in C perp}
For any short exact sequence $0\to A\to B\to C\to 0$, $(A\oplus B)^\perp=A^\perp\cap B^\perp$ is contained in $C^\perp$ and similarly, $\,^\perp(A\oplus B)\subseteq \,^\perp C$.\qed
\end{cor}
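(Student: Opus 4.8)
The statement is a purely formal consequence of Proposition \ref{prop: perpendicular category is abelian} together with the duality $X\in Y^\perp \iff Y\in\,^\perp X$ noted just above. The idea is to trade the right-perpendicular condition for a left-perpendicular membership, where the ``two out of three'' closure property applies directly to the given short exact sequence, and then trade back.

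\textbf{Steps.} First I would record the identity $(A\oplus B)^\perp = A^\perp\cap B^\perp$: since $\Hom(A\oplus B,Y)=\Hom(A,Y)\oplus\Hom(B,Y)$ and likewise for $\Ext$, the condition $\Hom(A\oplus B,Y)=0=\Ext(A\oplus B,Y)$ is equivalent to the conjunction of the corresponding conditions for $A$ and for $B$. Next, fix $W\in A^\perp\cap B^\perp$. By the duality $X\in Y^\perp\iff Y\in\,^\perp X$, this says precisely that $A\in\,^\perp W$ and $B\in\,^\perp W$. Now apply Proposition \ref{prop: perpendicular category is abelian} (the $\,^\perp Z$ version, with $Z=W$) to the short exact sequence $0\to A\to B\to C\to 0$: two of its three terms lie in $\,^\perp W$, hence so does the third, giving $C\in\,^\perp W$. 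Translating back via the same duality yields $W\in C^\perp$. Since $W$ was arbitrary, $(A\oplus B)^\perp=A^\perp\cap B^\perp\subseteq C^\perp$. The second assertion $\,^\perp(A\oplus B)\subseteq\,^\perp C$ follows by the symmetric argument, interchanging the roles of left and right perpendicular categories throughout and using the $Z^\perp$ version of Proposition \ref{prop: perpendicular category is abelian}.

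\textbf{Main obstacle.} There is essentially no obstacle here; the only thing to be careful about is bookkeeping — making sure the duality is applied in the correct direction and that the ``two of the three'' terms being assumed in $\,^\perp W$ are indeed $A$ and $B$ (the first two terms of the sequence), so that the conclusion lands on $C$.
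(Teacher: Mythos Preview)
Your argument is correct and is exactly the one the paper intends: the sentence ``Since $X\in Y^\perp$ if and only if $Y\in\,^\perp X$, we get the following'' together with Proposition~\ref{prop: perpendicular category is abelian} is the entire proof, and you have simply spelled it out.
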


\begin{thm}{(Theorem \ref{thm A2})}\label{theorem A2}
Let $E_\ast=(E_1,\cdots,E_n)$ be a complete exceptional sequence for a linear quiver of type $A_n$. Let $F$ be the corresponding rooted labeled forest. For each $E_i$ in $E_\ast$, let $v_i$ be the corresponding node of $F$. Then, 
\begin{enumerate}
\item $E_i$ is relatively projective and relatively injective if and only if $v_i$ is a root of $F$.
\item For $v_i$ not a root of $F$, let $v_j$ be the parent of $v_i$. Then, $E_i$ is relatively projective, resp. injective in $E_\ast$ if and only if $i<j$, resp $i>j$.
\end{enumerate}
\end{thm}

\begin{cor}\label{cor: C in text}(Corollary \ref{cor B})
Every object in a complete exceptional sequence of type $A_n$ with straight orientation is either relatively projective or relatively injective and at least one object is both.\qed
\end{cor}

To prove Theorem \ref{theorem A2}, let $v_{i_1},\cdots,v_{i_k}$ be the children of $v_j$ with
\[
	i_1<i_2<\cdots<i_p<j<i_{p+1}<\cdots<i_k.
\]
As in Proposition \ref{prop: children of the root}, we have that $E_j=M_{ab}$ and $E_{i_1},\cdots,E_{i_p}$ is a sequence of modules with consecutive support in the interval $[a+1,b]$ which is the support of $rM_{ab}$, the radical of $M_{ab}$. Also, $E_{i_p}$ is a submodule of $M_{ab}$. In more details, we have $E_{i_s}=M_{a_s,b_s}$ where $a\le b_0<b_1<b_2<\cdots<b_p=b$ and $a_s=b_{s-1}+1$ for $1\le s\le p$. In Example \ref{eg: A7 Hasse diagram}, $p=2$ and $a,b_0,b_1,b_2=1,3,4,5$ making $E_{i_1}=E_1=M_{44}$ and $E_{i_2}=E_2=M_{55}$.

\begin{lem}\label{lem: i>j then Ei is not relatively projective}
The objects $E_{i_y}$ for $y>p$ are not relatively projective in $E_\ast$.
\end{lem}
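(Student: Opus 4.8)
The plan is, for the fixed $y$ with $p<y\le k$, to produce a non-split short exact sequence lying entirely inside the right perpendicular category $Z^\perp$ of $Z=E_{i_y+1}\oplus\cdots\oplus E_n$ and ending in $E_{i_y}$; such a sequence shows that $E_{i_y}$ is not a projective object of $Z^\perp$, i.e.\ that $E_{i_y}$ is not relatively projective in $E_\ast$.

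Recall from the recursive construction that $E_j=M_{ab}$ carries a chain of submodules $E_j=B_p\supsetneq B_{p+1}\supsetneq\cdots\supsetneq B_k$, where $B_y=M_{e_y,b}$ with $a=e_p<e_{p+1}<\cdots<e_k\le b$, and $E_{i_y}=B_{y-1}/B_y=M_{e_{y-1},\,e_y-1}$. First I would note that the submodule inclusion $B_y\hookrightarrow B_{y-1}$ yields a non-split short exact sequence
\[
0\to B_y\to B_{y-1}\to E_{i_y}\to 0 ,
\]
since $B_{y-1}$ is an indecomposable interval module while $B_y,E_{i_y}\neq 0$. Because $(E_1,\dots,E_n)$ is exceptional and $i_y<m$ for every summand $E_m$ of $Z$, we have $E_{i_y}\in Z^\perp$. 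So if I can also show $B_y\in Z^\perp$, then Proposition \ref{prop: perpendicular category is abelian} forces $B_{y-1}\in Z^\perp$, the displayed sequence becomes a non-split short exact sequence inside the abelian category $Z^\perp$, and $E_{i_y}$ fails to be projective there, which is what we want.

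The substantive step is therefore to prove $\Hom(E_m,B_y)=0=\Ext(E_m,B_y)$ for every $m>i_y$. For interval modules over straight $A_n$ one has $\Hom(M_{cd},M_{e_y,b})\neq 0$ only if $e_y\le c\le b\le d$, and $\Ext(M_{cd},M_{e_y,b})\neq 0$ only if $c\le d=e_y-1$. Suppose $E_m=M_{cd}$ with $m>i_y$ and one of these holds. Since $e_y>a$ and $e_y\le b$, in the $\Hom$ case $\supp E_m$ meets $[a,b]=\supp E_j$ in a point $>a$, and in the $\Ext$ case it meets $[a,b]$ in the point $e_y-1\in[a,b)$; in both cases $\supp E_m\not\supseteq[a,b]$, so the noncrossing Lemma \ref{first lemma} forces $\supp E_m\subsetneq[a,b]$ and hence $v_m<v_j$. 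In the $\Hom$ case we moreover have $d\ge b$, so $\supp E_m=[c,b]$ and $E_m$ is a submodule of $E_j$; then $v_m$ lies on the chain of descendants of $v_j$ carrying submodules of $E_j$, and Lemma \ref{lem1: Ei submodule of Ej} gives $m<j<i_y$, a contradiction. In the $\Ext$ case $\supp E_m$ ends at $e_y-1$, the socle position of $E_{i_y}$; comparing with $\supp E_{i_y}=[e_{y-1},e_y-1]$ via Lemma \ref{first lemma}, and using that $v_j$ is the parent of $v_{i_y}$ so that nothing lies strictly between $v_{i_y}$ and $v_j$, one checks that $\supp E_m\subsetneq\supp E_{i_y}$, so $E_m$ is a proper submodule of $E_{i_y}$ and Lemma \ref{lem1: Ei submodule of Ej} again forces $m<i_y$, a contradiction. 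Hence no such $E_m$ exists, $B_y\in Z^\perp$, and the plan goes through.

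The main obstacle I expect is this last paragraph: extracting the exact shape of $\supp E_m$ from the nonvanishing of $\Hom$ or $\Ext$ into $B_y$, and then locating $v_m$ in the forest by combining the noncrossing property with the ``labels increase toward the root along a chain of nested submodules'' Lemma \ref{lem1: Ei submodule of Ej}. By contrast, writing down the short exact sequence and concluding via the abelianness of $Z^\perp$ is immediate from the construction and the results already proved.
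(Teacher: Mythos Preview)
Your overall strategy is sound and reaches the same endpoint as the paper: exhibit $B_{y-1}\in Z^\perp$ together with the proper epimorphism $B_{y-1}\twoheadrightarrow E_{i_y}$, whence $E_{i_y}$ is not projective in $Z^\perp$. One correction is needed: your stated criterion ``$\Ext(M_{cd},M_{e_y,b})\neq 0$ only if $c\le d=e_y-1$'' is too restrictive; the general condition for interval modules is $c\le e_y-1\le d<b$, which also allows the supports of $E_m$ and $B_y$ to cross. Fortunately your subsequent argument---noncrossing with $E_j$ to force $\supp E_m\subsetneq[a,b]$, then noncrossing with $E_{i_y}$ combined with the fact that $v_{i_y}$ is a \emph{child} of $v_j$---handles the crossing case too (it would produce $v_{i_y}<v_m<v_j$, which is impossible). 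So the conclusion stands once you adjust the wording and treat that extra sub-case explicitly.

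The paper arrives at the same non-split epimorphism by a shorter route that avoids any direct $\Hom/\Ext$ computation with the auxiliary module $B_y$. It simply notes that $E_j=B_p$ and each $E_{i_x}$ for $p<x<y$ already lie in $Z^\perp$ (their indices $j<i_{p+1}<\cdots<i_{y-1}$ are all below $i_y$), and then peels off the quotients: from $B_p\in Z^\perp$ and $E_{i_{p+1}}=B_p/B_{p+1}\in Z^\perp$ Proposition~\ref{prop: perpendicular category is abelian} gives $B_{p+1}\in Z^\perp$, and inductively $B_{y-1}\in Z^\perp$. This uses only the abelianness of $Z^\perp$ and facts already in hand, sidestepping your interval-module case analysis entirely. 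Your approach, by contrast, gives a self-contained verification that $B_y\in Z^\perp$ without invoking the intermediate $E_{i_x}$, at the price of working through the support combinatorics and Lemma~\ref{lem1: Ei submodule of Ej} twice.
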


\begin{proof}
Let $W$ be the direct sum of the objects $E_t$ which come after $E_{i_y}$, i.e., $t>i_y$. We will show that $E_t$ is not a projective object of $W^\perp$.

\underline{Claim}: For any $p<x\le y$, $W^\perp$ contains a module $M_{cb}$ with socle $S_b$ which maps onto $E_{i_x}$.

Since $E_{i_x}$ is a proper quotient of $M_{cb}$, $E_{i_x}$ cannot be projective in $W^\perp$ for $p<x\le y$. When $x=y$ this proves the lemma. So, it suffices to prove this claim.

The proof of the claim is by induction on $x$. For $x=p+1$, $E_{i_{p+1}}$ is a quotient of $E_j=M_{ab}$ which lies in $W^\perp$ since it is to the left of $E_{i_y}$ in $E_\ast$. We take $c=a$ in that case. Suppose the claim holds for $x<y$. Then, by induction, there is a module $M_{cb}\in W^\perp$ which maps onto $E_{i_x}$ which also lies in $W^\perp$. Therefore the kernel $M_{db}$ lies in the abelian category $W^\perp$. But $M_{db}$ maps onto $E_{i_{x+1}}$ since $E_{i_x},E_{i_{x+1}}$ have consecutive supports. So, the claim holds for $x+1$. This proves the claim and the lemma.
\end{proof}

\begin{lem}\label{lem: basic case}
$rM_{ab}$ is a projective object in the perpendicular category $M_{ab}^\perp$.
\end{lem}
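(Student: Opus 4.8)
If $a=b$ then $rM_{ab}=0$, which is trivially projective, so assume $a<b$; then $rM_{ab}=M_{a+1,b}$. The plan is to reduce the statement to one short computation of $\Ext^1$ between interval modules over linear $A_n$, combined with the ``$2$-out-of-$3$'' property of $M_{ab}^\perp$ from Proposition \ref{prop: perpendicular category is abelian}. Recall the standard description of extensions (obtainable from the projective presentation $0\to M_{b+1,n}\to M_{a,n}\to M_{ab}\to 0$): for interval modules over linear $A_n$,
\[
	\Ext^1(M_{xy},M_{cd})\neq 0\iff x<c\le y+1\ \text{and}\ d\ge y+1,
\qquad
	\Hom(M_{xy},M_{cd})\neq 0\iff c\le x\le d\le y.
\]

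First I would verify $M_{a+1,b}\in M_{ab}^\perp$: $\Hom(M_{ab},M_{a+1,b})=0$ since $a+1>a$, and $\Ext^1(M_{ab},M_{a+1,b})=0$ since the socle index $b$ of $M_{a+1,b}$ does not satisfy $b\ge b+1$. The core of the argument is then the claim that $\Ext^1_\Lambda(M_{a+1,b},Y)=0$ for every $Y\in M_{ab}^\perp$. By additivity it suffices to take $Y=M_{cd}$ indecomposable. By the formula, $\Ext^1(M_{a+1,b},M_{cd})\neq 0$ forces $a+1<c\le b+1$ and $d\ge b+1$; but then a fortiori $a<c\le b+1$ and $d\ge b+1$, which is exactly the condition $\Ext^1(M_{ab},M_{cd})\neq 0$, contradicting $M_{cd}\in M_{ab}^\perp$. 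Hence no object of $M_{ab}^\perp$ admits a non-split extension by $M_{a+1,b}$.

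To finish, let $g\colon Y\onto M_{a+1,b}$ be any epimorphism in $M_{ab}^\perp$. Applying Proposition \ref{prop: perpendicular category is abelian} to $0\to\ker g\to Y\to M_{a+1,b}\to 0$ shows $\ker g\in M_{ab}^\perp$, so the class of this sequence lies in $\Ext^1_\Lambda(M_{a+1,b},\ker g)=0$ and the sequence splits. Since every epimorphism onto $M_{a+1,b}$ in the abelian category $M_{ab}^\perp$ splits, $M_{a+1,b}=rM_{ab}$ is a projective object of $M_{ab}^\perp$. The only step requiring genuine care is pinning down the two combinatorial $\Ext$/$\Hom$ conditions precisely (equivalently, the exact range of the left endpoint $c$) and checking that membership in $M_{ab}^\perp$ is strictly stronger than ``receiving a non-split extension from $rM_{ab}$''; the rest is formal.
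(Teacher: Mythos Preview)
Your proof is correct. The approach differs from the paper's in a useful way, so a short comparison is in order.

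The paper splits into two cases. When $b=n$, it observes that $rM_{ab}=M_{a+1,n}$ is already projective in $mod\text{-}\Lambda$ and lies in $M_{ab}^\perp$. When $b<n$, it first checks $rM_{ab}\in M_{ab}^\perp$, then argues by contradiction: a non-split cover of $rM_{ab}$ inside $M_{ab}^\perp$ would have an indecomposable summand of the form $M_{a+1,c}$ with $c>b$, which therefore surjects onto $\tau M_{ab}=M_{a+1,b+1}$; by the Auslander--Reiten formula this forces $\Ext^1(M_{ab},M_{a+1,c})\neq 0$, contradicting membership in $M_{ab}^\perp$.

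Your route is more uniform: you write down the explicit combinatorial criteria for $\Hom$ and $\Ext^1$ between interval modules and observe that the $\Ext^1$-nonvanishing region for $M_{a+1,b}$ is contained in that for $M_{ab}$, so $\Ext^1_\Lambda(rM_{ab},Y)=0$ for every $Y\in M_{ab}^\perp$. Combined with the $2$-out-of-$3$ property this immediately gives projectivity, with no case distinction on $b$. What you gain is a cleaner, orientation-free-looking argument that makes the mechanism (``shrinking the top weakens the Ext condition'') completely explicit; what the paper's version gains is that it never needs to state the general Hom/Ext formula, working instead with the concrete shape of a would-be cover. Both ultimately rest on the same fact, namely $D\Hom(-,\tau M_{ab})\cong \Ext^1(M_{ab},-)$, but your packaging is slightly more efficient here. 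One small remark: your final paragraph is not really needed once you have $\Ext^1_\Lambda(rM_{ab},Y)=0$ for all $Y\in M_{ab}^\perp$, since that already is the definition of projectivity in this extension-closed subcategory.
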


\begin{proof}
When $b=n$, $rM_{ab}$ is projective and lies in $M_{ab}^\perp$.

For $b<n$, $rM_{ab}$ is a quotient of $\tau M_{ab}=M_{a+1,b+1}$. So, $\Hom(rM_{ab},\tau M_{ab})=0$ and $\Hom(M_{ab},rM_{ab})=0$. So, $rM_{ab}\in M_{ab}^\perp$. 

To show that $rM_{ab}$ is projective in $M_{ab}^\perp$, suppose not. Then there is another object  $X\in M_{ab}^\perp$ and an epimorphism $X\onto rM_{ab}$. But then $X=M_{a+1,c}$ for some $c>b$. Then we have an epimorphism $X\onto \tau M_{ab}$ contradicting the assumption that $X\in M_{ab}^\perp$. So, $rM_{ab}$ is projective in $M_{ab}^\perp$.
\end{proof}

Lemma \ref{lem: basic case} is the case $s=p$ of the following lemma:

\begin{lem}\label{lem: long projectives in W-perp}
$M_{a+1,b_s}$ is a projective object in the perpendicular category $W^\perp$ where $W=M_{ab}\oplus E_{i_{s+1}}\oplus\cdots\oplus E_{i_p}$.
\end{lem}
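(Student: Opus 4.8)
I would prove Lemma \ref{lem: long projectives in W-perp} by downward induction on $s$, starting from $s=p$, which is Lemma \ref{lem: basic case} (note that when $s=p$ the module $M_{a+1,b_p}=M_{a+1,b}=rM_{ab}$ and $W=M_{ab}$). So assume the statement holds for $s+1$, i.e. $M_{a+1,b_{s+1}}$ is projective in $(M_{ab}\oplus E_{i_{s+2}}\oplus\cdots\oplus E_{i_p})^\perp$, and we must establish it for $s$, where we have enlarged the summand $W$ by adding $E_{i_{s+1}}=M_{a_{s+1},b_{s+1}}$ with $a_{s+1}=b_s+1$.

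The key structural observation is the short exact sequence relating the three relevant modules. Since $E_{i_s},\dots,E_{i_p}$ have consecutive supports filling $[a+1,b]$ and $E_{i_{s+1}}=M_{b_s+1,\,b_{s+1}}$, there is a short exact sequence
\[
	0\to M_{a+1,b_s}\to M_{a+1,b_{s+1}}\to M_{b_s+1,\,b_{s+1}}\to 0,
\]
that is, $0\to M_{a+1,b_s}\to M_{a+1,b_{s+1}}\to E_{i_{s+1}}\to 0$. First I would check that $M_{a+1,b_s}$ actually lies in $W^\perp$: we have $\Hom(M_{ab},M_{a+1,b_s})=0$ and $\Ext(M_{ab},M_{a+1,b_s})=0$ as in Lemma \ref{lem: basic case}, and $\Hom(E_{i_t},M_{a+1,b_s})=0=\Ext(E_{i_t},M_{a+1,b_s})$ for $s+1\le t\le p$ since those modules have disjoint supports with $M_{a+1,b_s}$ and are not in the right position to extend it (the support $[a_t,b_t]$ of $E_{i_t}$ sits to the right of $[a+1,b_s]$, so an extension would have $E_{i_t}$ as submodule, forcing $b_t < a+1$, impossible; here one uses $t>s$ so $a_t = b_{t-1}+1 > b_s$). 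By the induction hypothesis $M_{a+1,b_{s+1}}\in (M_{ab}\oplus E_{i_{s+2}}\oplus\cdots\oplus E_{i_p})^\perp$; combined with the fact that it has no morphisms/extensions to or from $E_{i_{s+1}}$ in the required direction, it lies in $W^\perp$ as well. And $E_{i_{s+1}}\in W^\perp$ since $(E_1,\dots,E_n)$ is an exceptional sequence (Theorem \ref{thm: exceptional sequence of a forest}) and all the summands of $W$ come after $E_{i_{s+1}}$ in that sequence — wait, I need to be careful about the index order; more robustly, $E_{i_{s+1}}$ has consecutive support with $M_{a+1,b_s}$ sitting to its left, and disjoint/non-adjacent support with the other summands, so $\Hom$ and $\Ext$ from $W$ into $E_{i_{s+1}}$ vanish. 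Hence $W^\perp$ is an abelian (exact-sequence-closed) subcategory by Proposition \ref{prop: perpendicular category is abelian} containing all three terms.

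Now to show $M_{a+1,b_s}$ is projective in $W^\perp$: suppose $X\in W^\perp$ surjects onto $M_{a+1,b_s}$. Since $M_{a+1,b_s}$ is uniserial with top $S_{a+1}$, and $X$ is a representation of a linear $A_n$ quiver surjecting onto it, $X$ must have the form $M_{a+1,c}$ for some $c\ge b_s$ (the support of $X$ must contain $[a+1,b_s]$ and begin exactly at $a+1$ for the surjection to exist; a disjoint-plus-bigger configuration is ruled out because then $X$ would not surject onto $M_{a+1,b_s}$ with top at $a+1$). I want to conclude $c=b_s$. If $c>b_s$, consider where $c$ falls: if $b_s < c \le b_{s+1}$ then $X=M_{a+1,c}$ surjects onto a module whose support properly contains $[a+1,b_s]$, and I claim this contradicts $X\in W^\perp$ — indeed $M_{a+1,b_{s+1}}$ is projective in $W^\perp$ by induction, so any $X\in W^\perp$ with a map onto $M_{a+1,b_s}$ factoring through the inclusion $M_{a+1,b_s}\hookrightarrow M_{a+1,b_{s+1}}$... hmm, the cleanest argument: since $M_{a+1,b_{s+1}}$ is projective in $W^\perp$ (by the induction hypothesis, because $W^\perp \subseteq (M_{ab}\oplus E_{i_{s+2}}\oplus\cdots)^\perp$ when we drop the $E_{i_{s+1}}$ constraint — actually adding a constraint shrinks the category, so $W^\perp \subseteq$ the smaller-$s$ perpendicular category), its summands/quotients in $W^\perp$ are controlled; and if $X = M_{a+1,c}$ with $c > b_s$, then $X$ surjects onto $\tau M_{a,b_s}=M_{a+1,b_s+1}$, and since $b_s + 1 = a_{s+1} \le b_{s+1} \le b$, this target has a nonzero $\Hom$ or $\Ext$ interaction with some summand of $W$ — specifically, if $c \le b_{s+1}$, $M_{a+1,c}$ surjects onto $M_{b_s+1,c}$ which is a submodule of $E_{i_{s+1}}=M_{b_s+1,b_{s+1}}$, giving $\Hom(X, E_{i_{s+1}})\ne 0$ via $X \onto M_{b_s+1,c}\hookrightarrow E_{i_{s+1}}$... but that is not a constraint since $E_{i_{s+1}}$ is a summand of $W$ and we need $\Hom(X,E_j)=0$ for summands — wait, yes it is: $\Hom(X, W)=0$ is required. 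So $c > b_s$ with $c \le b_{s+1}$ contradicts $X\in W^\perp$. For $c > b_{s+1}$ one similarly finds a nonzero map or extension to $M_{ab}$ or to one of $E_{i_{s+2}},\dots,E_{i_p}$, or iterates the argument. Thus $c = b_s$ and $X = M_{a+1,b_s}$, proving projectivity.

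**Main obstacle.** The delicate point is the case analysis at the end: showing that any $X \in W^\perp$ admitting an epimorphism onto $M_{a+1,b_s}$ is forced to equal $M_{a+1,b_s}$. One must carefully track how an overshooting $X = M_{a+1,c}$ with $c>b_s$ necessarily produces a nonzero $\Hom$ or $\Ext^1$ into one of the consecutive blocks $E_{i_{s+1}},\dots,E_{i_p}$ or into $M_{ab}=E_j$, using that these have consecutive supports partitioning $[a+1,b]$ and that $E_j=M_{ab}$ has $\tau M_{ab}=M_{a+1,b+1}$; the position of $c$ relative to the breakpoints $b_{s+1}<b_{s+2}<\cdots<b_p=b<b+1$ determines which summand obstructs. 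I expect this bookkeeping, rather than any conceptual difficulty, to be where the real work lies, and it parallels exactly the contradiction step in the proof of Lemma \ref{lem: basic case}.
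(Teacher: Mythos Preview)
Your approach differs substantially from the paper's and contains a genuine error, though the overall strategy is salvageable.

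The paper does not use induction at all. Instead it observes that $M_{a,b_s}$ is obtained from the summands of $W$ by iterated quotients: $M_{a,b_{p-1}}=M_{ab}/E_{i_p}$, and $M_{a,b_t}=M_{a,b_{t+1}}/E_{i_{t+1}}$ for $t\ge s$. By Corollary~\ref{cor: A+B perp is contained in C perp} this gives $W^\perp\subseteq M_{a,b_s}^\perp$. Since $M_{a+1,b_s}=rM_{a,b_s}$, Lemma~\ref{lem: basic case} applied to $M_{a,b_s}$ shows $M_{a+1,b_s}$ is projective in $M_{a,b_s}^\perp$, hence in the smaller category $W^\perp$. This completely bypasses the case analysis you flagged as the main obstacle.

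Your argument has two concrete problems. First, you repeatedly write that $X\in W^\perp$ requires $\Hom(X,W)=0$; in this paper $W^\perp=\{X:\Hom(W,X)=0=\Ext(W,X)\}$, so the arrows go the other way. Second, your claim that ``$M_{a+1,c}$ surjects onto $M_{b_s+1,c}$, which is a submodule of $E_{i_{s+1}}$'' is wrong on both counts: for the linear orientation, quotients of $M_{a+1,c}$ are $M_{a+1,c'}$ with $c'\le c$, so $M_{b_s+1,c}$ is a \emph{submodule} of $M_{a+1,c}$; and $M_{b_s+1,c}\subseteq E_{i_{s+1}}=M_{b_s+1,b_{s+1}}$ only when $c=b_{s+1}$. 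The correct obstruction in your case $b_s<c\le b_{s+1}$ is that $\Hom(E_{i_{s+1}},X)\neq 0$ (not $\Hom(X,E_{i_{s+1}})$), since $a+1\le b_s+1\le c\le b_{s+1}$. More generally, for $b_{t-1}<c\le b_t$ one gets $\Hom(E_{i_t},X)\neq 0$, and for $c>b$ one gets $\Ext(M_{ab},X)\neq 0$ exactly as in Lemma~\ref{lem: basic case}. So your case analysis \emph{can} be repaired, but note that your induction hypothesis is never invoked in it --- the argument is really direct, and at that point the paper's one-line reduction via $W^\perp\subseteq M_{a,b_s}^\perp$ is far cleaner.
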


\begin{proof}
For $b_s=b$, $M_{a+1,b_s}=rM_{ab}$ and the previous lemma applies.

For $b_s<b$, $M_{a+1,b_s}$ is a quotient of $\tau M_{ab}=M_{a+1,b+1}$. So, $M_{a+1,b_s}\in M_{ab}^\perp$. Also, $M_{a+1,b_s}\in E_{i_t}^\perp$ for $s<t\le p$ since the support of $E_{i_t}$ lies in $(b_s,n]$. So, $M_{a+1,b_s}\in W^\perp$

The module $M_{a,b_s}$ is an iterated quotient of components of $W$ since $M_{a,b_{p-1}}=M_{ab}/E_{i_p}$ and $M_{a,b_s}=M_{a,b_{s+1}}/E_{i_{s+1}}$. Therefore, $W^\perp\subset M_{a,b_s}^\perp$ by Corollary \ref{cor: A+B perp is contained in C perp}. $M_{a+1,b_s}$ is projective in $M_{a,b_s}^\perp$ by the previous lemma and therefore it is projective in $W^\perp$.
\end{proof}

\begin{thm}\label{thm: B2a}
Let $v_i$ be a child of $v_j$ in the rooted labeled forest $F$. Then the object $E_j$ in the corresponding complete exceptional sequence $E_\ast=(E_1,\cdots,E_n)$ is relatively projective if and only if $i<j$.
\end{thm}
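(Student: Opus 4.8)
The plan is to establish the two implications separately, according to whether the child's label $i$ is smaller or larger than the parent's label $j$, reducing each to the lemmas already in hand. I keep the notation fixed above: the children of $v_j$ are
\[
	v_{i_1}<\cdots<v_{i_p}<j<v_{i_{p+1}}<\cdots<v_{i_k},
\]
with $E_j=M_{ab}$; the submodule-side children $E_{i_1},\dots,E_{i_p}$ (labels $<j$) have consecutive supports filling $[a+1,b]$ and ending at the socle $S_b$ of $E_j$ (so $E_{i_p}\subseteq E_j$), while the quotient-side children (labels $>j$) lie past the gap. Recall that an object $E_\ell$ is relatively projective exactly when it is projective in the right-perpendicular category $(\bigoplus_{t>\ell}E_t)^\perp$. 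Since the comparison of $i$ with $j$ is precisely what separates the two sides of the socle of $E_j$, the object whose relative projectivity is controlled by this comparison is the module sitting at the child vertex $v_i$; I will show this child module is relatively projective if and only if $i<j$, in agreement with Theorem \ref{thm B}(2) and the lemmas above.

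The direction $i>j$ is immediate: here $v_i=v_{i_y}$ is a quotient-side child, and Lemma \ref{lem: i>j then Ei is not relatively projective} already exhibits inside the perpendicular category of the objects following $E_{i_y}$ a module surjecting properly onto $E_{i_y}$, so $E_{i_y}$ is not projective there. For $i<j$ write $v_i=v_{i_s}$ with $s\le p$, so that $E_{i_s}=M_{b_{s-1}+1,\,b_s}$ (with $b_0=a$ and $b_p=b$), and put $Z=\bigoplus_{t>i_s}E_t$. All modules are uniserial, so an indecomposable surjecting properly onto $E_{i_s}$ is a socle-extension $M_{b_{s-1}+1,\,d}$ with $d>b_s$; it therefore suffices to show that no such module lies in $Z^\perp$, for then $E_{i_s}$ admits no proper epimorphism in the abelian category $Z^\perp$ (Proposition \ref{prop: perpendicular category is abelian}) and is projective there.

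The heart of the proof, and the step I expect to demand the most care, is this exhaustion over $d>b_s$, which I would split at the socle $b$ of $E_j$. For $b_s<d\le b$, the new vertices of $M_{b_{s-1}+1,d}$ fall into the supports of the later submodule-side siblings $E_{i_{s+1}},\dots,E_{i_p}$, whose consecutive supports cover $[b_s+1,b]$; for $d\in(b_{s'-1},b_{s'}]$ the interval Hom-criterion gives a nonzero map $E_{i_{s'}}\to M_{b_{s-1}+1,d}$, and since $i_{s'}>i_s$ this sibling belongs to $Z$, so $M_{b_{s-1}+1,d}\notin Z^\perp$. For $d>b$ the support $[b_{s-1}+1,d]$ crosses the support $[a,b]$ of $E_j$ (indeed $a<b_{s-1}+1\le b<d$), whence $\Ext(E_j,M_{b_{s-1}+1,d})\neq0$ by Lemma \ref{first lemma}, and again $M_{b_{s-1}+1,d}\notin Z^\perp$ because $j>i_s$. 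Thus every proper socle-extension of $E_{i_s}$ is expelled from $Z^\perp$, so $E_{i_s}$ is relatively projective. One may instead package the range $b_s<d\le b$ through Lemmas \ref{lem: basic case} and \ref{lem: long projectives in W-perp}, which exhibit the merged modules $M_{a+1,b_s}$ as projective in the relevant $W^\perp$; the decisive input beyond those lemmas is the crossing argument for $d>b$, together with the verification, via the support dictionary of Proposition \ref{prop: properties of Ei}, that these two ranges exhaust all $d>b_s$.
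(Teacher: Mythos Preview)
Your argument is correct and complete. Both you and the paper dispose of the case $i>j$ by invoking Lemma~\ref{lem: i>j then Ei is not relatively projective}. For $i<j$ the two proofs diverge.

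The paper works structurally: it observes that $W_0=E_j\oplus E_{i_{s+1}}\oplus\cdots\oplus E_{i_p}$ is a summand of $Z$, so $Z^\perp\subseteq W_0^\perp$; then Lemma~\ref{lem: long projectives in W-perp} gives that $M_{a+1,b_s}$ is projective in $W_0^\perp$, and since $E_{i_s}$ is a submodule of $M_{a+1,b_s}$ and $W_0^\perp$ is hereditary, $E_{i_s}$ is projective in $W_0^\perp$, hence in $Z^\perp$. This handles all potential covers of $E_{i_s}$ at once---no separate ``crossing'' step for $d>b$ is needed, contrary to your closing remark. Your approach instead classifies the indecomposable proper covers $M_{a_s,d}$ of $E_{i_s}$ and expels each from $Z^\perp$ by hand: for $b_s<d\le b$ a later submodule-side sibling $E_{i_{s'}}$ maps in nontrivially, and for $d>b$ the support crosses that of $E_j$. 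This is more elementary---it avoids invoking that $W_0^\perp$ is hereditary---at the cost of the explicit case split.

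Two small points. First, your reduction ``no indecomposable proper cover of $E_{i_s}$ lies in $Z^\perp$, hence $E_{i_s}$ is projective there'' is right, but the citation of Proposition~\ref{prop: perpendicular category is abelian} is not quite on target: what you actually use is that $Z^\perp$ is closed under direct summands (so a non-split epi $X\twoheadrightarrow E_{i_s}$ in $Z^\perp$ yields an indecomposable summand of $X$, lying in $Z^\perp$, that still surjects properly). Second, your convention $b_0=a$ forces $a_1=a+1$, which need not hold when there are quotient-side children; but your argument only uses $a<a_s\le b_s\le b$, which is true in general, so the proof is unaffected.
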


\begin{proof} If $i>j$ then, by Lemma \ref{lem: i>j then Ei is not relatively projective}, $E_i$ is not relatively projective. So, suppose $i<j$. Then, in the notation above, $i=i_s$ for some $s\le p$.

Let $W$ be the direct sum of all objects $E_k$ in the exceptional sequence with $k>i$. This includes the summands of $W_0=E_j \oplus E_{i_{s+1}}\oplus\cdots\oplus E_{i_p}$. Thus $W^\perp\subset W_0^\perp$. By Lemma \ref{lem: long projectives in W-perp}, $W_0^\perp$ contains as a projective object the module $M_{a+1,b_s}$ which has $E_{i_s}=M_{a_s,b_s}$ as a submodule. Since $W_0^\perp$ is a hereditary abelian category, this makes $E_{i_s}$ a projective object of $W_0^\perp$. So, $E_{i_s}$ is a projective object of $W^\perp\subset W_0^\perp$. 
\end{proof}

We have the following dual statement with an analogous argument.

\begin{thm}\label{thm: B2b}
Let $v_i$ be a child of $v_j$ in the rooted labeled forest $F$. Then the object $E_j$ in the corresponding complete exceptional sequence $E_\ast=(E_1,\cdots,E_n)$ is relatively injective if and only if $i>j$.\qed
\end{thm}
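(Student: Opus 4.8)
The plan is to deduce Theorem~\ref{thm: B2b} from Theorem~\ref{thm: B2a} using the standard $\kk$-linear duality $D=\Hom_\kk(-,\kk)$, which exchanges left and right perpendicular categories and therefore exchanges ``relatively injective'' with ``relatively projective''.

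First I would record how $D$ behaves here. Let $\Lambda$ be the path algebra of $1\to 2\to\cdots\to n$, so that $D$ is a contravariant equivalence $mod\text-\Lambda\to mod\text-\Lambda\op$; relabeling vertex $i$ as $n+1-i$ identifies $\Lambda\op$ with $\Lambda$. Let $\Phi$ be the resulting contravariant autoequivalence of $mod\text-\Lambda$. Then $\Phi M_{ab}=M_{n+1-b,\,n+1-a}$, $\Phi$ interchanges submodules with quotient modules, and --- because $\Lambda$ is hereditary --- $\Phi$ interchanges $\Hom(X,Y)$ with $\Hom(\Phi Y,\Phi X)$ and $\Ext(X,Y)$ with $\Ext(\Phi Y,\Phi X)$. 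Consequently $\Phi(Z^\perp)={}^\perp(\Phi Z)$, $\Phi$ sends projective objects of $Z^\perp$ to injective objects of ${}^\perp(\Phi Z)$, and $\Phi$ carries a complete exceptional sequence $E_\ast=(E_1,\dots,E_n)$ to the complete exceptional sequence $\Phi E_\ast:=(\Phi E_n,\Phi E_{n-1},\dots,\Phi E_1)$.

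Next I would identify the forest of $\Phi E_\ast$. Since $[a,b]\subseteq[c,d]$ if and only if $[n+1-b,n+1-a]\subseteq[n+1-d,n+1-c]$, the containment order on the supports of the objects of $\Phi E_\ast$ is the containment order of $E_\ast$ with each index $j$ renamed $n+1-j$; hence the rooted labeled forest $F'$ of $\Phi E_\ast$ is exactly $F$ with every label $j$ replaced by $n+1-j$. In particular $v_i$ is a child of $v_j$ in $F$ precisely when the node labeled $n+1-i$ is a child of the node labeled $n+1-j$ in $F'$. Writing $\Phi E_\ast=(E'_1,\dots,E'_n)$ with $E'_k=\Phi E_{n+1-k}$, the object $E_i$ is relatively injective in $E_\ast$ if and only if $\Phi E_i=E'_{n+1-i}$ is relatively projective in $\Phi E_\ast$; by Theorem~\ref{thm: B2a} applied to $F'$ this holds if and only if $n+1-i<n+1-j$, i.e.\ if and only if $i>j$, which is the assertion.

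I do not expect a real obstacle; the delicate point is purely bookkeeping --- keeping the sequence reversal and the label inversion $j\mapsto n+1-j$ straight at the same time --- together with the (immediate) observation that $\Phi$ respects $\Ext^1$ because $\Lambda$ is hereditary, so $D$ turns a projective presentation into an injective copresentation. Alternatively, one can avoid the global duality and simply dualize Lemmas~\ref{lem: i>j then Ei is not relatively projective}, \ref{lem: basic case}, \ref{lem: long projectives in W-perp} and Theorem~\ref{thm: B2a} line by line, systematically replacing ``radical / quotient / submodule / relatively projective'' by ``socle-quotient / submodule / quotient / relatively injective''; the combinatorial skeleton of each proof is unchanged.
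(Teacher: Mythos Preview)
Your proposal is correct and is precisely what the paper means by ``dual statement with an analogous argument'': the paper gives no further details for Theorem~\ref{thm: B2b}, and your use of the $\kk$-duality $D$ together with the self-opposite identification of the linear $A_n$ quiver is exactly the duality the authors later spell out in Section~\ref{sec: extended braid group action} (where they note $D(E_1,\dots,E_n)=(DE_n,\dots,DE_1)$ and that $DF\cong F$ with $v_i'=v_{n-i+1}$). Your alternative of dualizing Lemmas~\ref{lem: i>j then Ei is not relatively projective}--\ref{lem: long projectives in W-perp} line by line is the other natural reading of ``analogous argument'' and is equally valid.
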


By the discussion above, the only $E_j$ which have a chance to be both relatively projective and relatively injective are ones corresponding to the roots of the components of $F$. Let $v_{j_1},v_{j_2},\cdots,v_{j_k}$ be the roots of $F$ with $j_1<j_2<\cdots<j_k$. Then the modules $E_{j_1},\cdots,E_{j_k}$ have consecutive supports with union $[1,n]$.

\begin{thm}\label{thm: B2c}
The objects
$E_{j_1},\cdots,E_{j_k}$ corresponding to the roots of the forest $F$ are relatively projective and relatively injective in the complete exceptional sequence $E_\ast$.
\end{thm}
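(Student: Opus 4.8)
The plan is to prove the two halves of the theorem by dual direct computations with interval modules, using (Remark \ref{rem 2.10: consecutive supports}) that the root modules $E_{j_1},\dots,E_{j_k}$ have consecutive supports tiling $[1,n]$. Write $E_{j_t}=M_{a_t,b_t}$, so that $a_1=1$, $b_k=n$ and $a_{t+1}=b_t+1$ for $1\le t<k$. Throughout I will use the standard facts for the linearly oriented $A_n$ quiver that every indecomposable module is an interval module, that $\Hom(M_{\alpha\beta},M_{\gamma\delta})\ne 0$ exactly when $\gamma\le\alpha\le\delta\le\beta$ (so that $M_{\alpha,\delta}$ is then a common quotient of $M_{\alpha\beta}$ and submodule of $M_{\gamma\delta}$), and that $\Ext(M_{\alpha\beta},M_{\gamma\delta})\ne 0$ exactly when $\gamma=\beta+1$.

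\emph{Relative projectivity.} Fix $t$ and set $W=\bigoplus_{m>j_t}E_m$, so the relevant category is $W^\perp$. By Proposition \ref{prop: perpendicular category is abelian}, $W^\perp$ is abelian and closed under extensions in $mod\text-\Lambda$; in particular extensions of objects of $W^\perp$ are the same inside $W^\perp$ as in $mod\text-\Lambda$, so $E_{j_t}$ is projective in $W^\perp$ if and only if $\Ext(E_{j_t},Y)=0$ for every $Y\in W^\perp$. Since $W^\perp$ is also closed under direct summands, and $\Ext(E_{j_t},M_{cd})\ne0$ forces $c=b_t+1=a_{t+1}$, it suffices to show that no indecomposable $Y=M_{a_{t+1},d}$ lies in $W^\perp$. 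If $t=k$ there is nothing to check, since $b_k=n$ leaves no room for such a $Y$ (equivalently $E_{j_k}=M_{a_k,n}$ is already projective in $mod\text-\Lambda$). If $t<k$, then $a_{t+1}\le d\le n$, so $d$ lies in the support $[a_s,b_s]$ of a unique root module; necessarily $s\ge t+1$, so $j_s>j_t$ and $E_{j_s}=M_{a_s,b_s}$ is a summand of $W$. But $a_{t+1}\le a_s\le d\le b_s$ gives $\Hom(E_{j_s},Y)\ne0$, contradicting $Y\in W^\perp$. Hence each $E_{j_t}$ is relatively projective.

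\emph{Relative injectivity.} This is dual: with $Z=\bigoplus_{m<j_t}E_m$, the same reduction (using that $\,^\perp Z$ is abelian and extension-closed) shows it is enough to rule out, in $\,^\perp Z$, an indecomposable $X=M_{c,a_t-1}$ (the only shape with $\Ext(X,E_{j_t})\ne0$). This is vacuous for $t=1$, since then $E_{j_1}=M_{1,b_1}$ is injective in $mod\text-\Lambda$. For $t>1$, the left endpoint $c$ of such an $X$ lies in $\supp E_{j_s}$ for a unique $s\le t-1$, so $j_s<j_t$ and $E_{j_s}=M_{a_s,b_s}$ is a summand of $Z$; since $a_s\le c\le b_s\le a_t-1$ we get $\Hom(X,E_{j_s})\ne0$, contradicting $X\in\,^\perp Z$.

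The only genuine content is the interval-module bookkeeping: determining exactly when $\Hom$ and $\Ext$ between interval modules of linearly oriented $A_n$ vanish, and checking that projectivity (resp.\ injectivity) in the perpendicular category is detected by the ambient $\Ext$ because that category is closed under extensions. Granting these, the argument collapses to the short case analysis above, powered entirely by the tiling of $[1,n]$ by the root supports.
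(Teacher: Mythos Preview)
Your overall strategy is sound, but your stated $\Ext$ criterion is incorrect: for linearly oriented $A_n$ one has $\Ext(M_{\alpha\beta},M_{\gamma\delta})\ne 0$ precisely when $\alpha+1\le\gamma\le\beta+1\le\delta$ (apply the Auslander--Reiten formula with $\tau M_{\alpha\beta}=M_{\alpha+1,\beta+1}$), not only when $\gamma=\beta+1$. For instance $\Ext(M_{12},M_{23})\ne 0$ in $A_3$. The condition $\gamma=\beta+1$ characterizes nonvanishing $\Ext$ only among pairs with \emph{noncrossing} supports, which you cannot assume for an arbitrary $Y\in W^\perp$. Thus the ``bad'' indecomposables are all $Y=M_{c,d}$ with $a_t<c\le b_t+1$ and $d\ge b_t+1$, not just those with $c=a_{t+1}$. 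Fortunately your exclusion argument extends verbatim: since $d\ge a_{t+1}$, the right endpoint $d$ lies in $[a_s,b_s]$ for some $s\ge t+1$, and then $c\le a_{t+1}\le a_s\le d\le b_s$ still gives $\Hom(E_{j_s},Y)\ne 0$ with $j_s>j_t$, so $Y\notin W^\perp$. The injectivity half needs the analogous correction (the bad $X=M_{c,d}$ satisfy $c\le a_t-1\le d\le b_t-1$, and the left endpoint $c$ lands in the support of some $E_{j_s}$ with $s<t$).

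With that fix your proof is correct and follows a genuinely different route from the paper. The paper argues inductively: $E_{j_k}$ is a submodule of the projective $M_{1n}$, hence projective; then $M_{1n}/E_{j_k}$ is projective in $E_{j_k}^\perp$ by Lemma~\ref{lem: basic case}, so its submodule $E_{j_{k-1}}$ is projective there and a fortiori in the smaller perpendicular category, and so on down. Your argument bypasses this descending chain of perpendicular categories with a single direct combinatorial check on interval endpoints. It is shorter and self-contained, while the paper's version has the virtue of reusing the machinery (Lemmas~\ref{lem: basic case}--\ref{lem: long projectives in W-perp}) already developed for Theorem~\ref{thm: B2a}.
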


\begin{proof}
First, $E_{j_k}$ is projective since it is a submodule of the projective module $M_{1n}$. Then, as in the proof of Lemma \ref{lem: long projectives in W-perp}, the quotient $M_{1n}/E_{j_k}$ is a projective object in the category $E_{j_k}^\perp$. Since $E_{j_{k-1}}$ is a submodule of $M_{1n}/E_{j_k}$, it also becomes projective in $E_{j_k}^\perp$ and therefore projective in $W^\perp$ where $W$ is the direct sum of all $E_t$ for $t>j_{k-1}$. Repeating this process, we see that each $E_{j_t}$ is relatively projective in $E_\ast$.

The dual arguments show that each $E_{j_t}$ is relatively injective in $E_\ast$.
\end{proof}

Theorem \ref{theorem A2} is given by combining Theorems \ref{thm: B2a}, \ref{thm: B2b} and \ref{thm: B2c}.

\subsection{Chord diagrams and rooted labeled trees}\label{ss: Goulden-Yong}

First, we note that a rooted forest with $n$ labeled vertices is equivalent to a rooted tree with $n+1$ vertices and $n$ labeled edges. (Just move the edge labels to the endpoint furthest from the root, delete the root of the tree, then declare the adjacent vertices to be roots of the resulting forest.)

A bijection between exceptional sequences on quivers of type $A_n$ (with any orientation) and rooted trees with $n$ labeled edges is given as follows. 
\begin{figure}[htbp]
\begin{center}
\begin{tikzpicture}
\coordinate (A1) at (0,3);
\coordinate (A2) at (1.76,2.43);
\coordinate (A3) at (2.85,.93);
\coordinate (A4) at (2.85,-.93);
\coordinate (A5) at (1.76,-2.43);
\coordinate (A6) at (0,-3);
\coordinate (A7) at (-1.76,-2.43);
\coordinate (A8) at (-2.85,-.93);
\coordinate (A9) at (-2.85,.93);
\coordinate (A0) at (-1.76,2.43);
\coordinate (B1) at (0,3.5);
\coordinate (B2) at (2.06,2.83);
\coordinate (B3) at (3.33,1.08);
\coordinate (B4) at (3.33,-1.08);
\coordinate (B5) at (2.06,-2.83);
\coordinate (B6) at (0,-3.5);
\coordinate (B0) at (-2.06,2.83);
\coordinate (B9) at (-3.33,1.08);
\coordinate (B8) at (-3.33,-1.08);
\coordinate (B7) at (-2.06,-2.83);
\draw (0,0) circle[radius=3cm];
\foreach \x in {A1,A2,A3,A4,A5,A6,A7,A8,A9,A0}
\draw[fill] (\x) circle[radius=2pt];
\draw (B1) node{1};
\draw (B2) node{2};
\draw (B3) node{3};
\draw (B4) node{4};
\draw (B5) node{5};
\draw (B6) node{6};
\draw (B7) node{7};
\draw (B8) node{8};
\draw (B9) node{9};
\draw (B0) node{0};
\coordinate (T) at (-.2,1);
\coordinate (T5) at (0,-1);
\coordinate (T4) at (-1.7,0);
\coordinate (T9) at (-3,0);
\coordinate (T7) at (-2.5,-2);
\coordinate (T2) at (-1,-3);
\coordinate (T6) at (3,-2);
\coordinate (T3) at (1,1);
\coordinate (T1) at (2,.7);
\coordinate (T8) at (3,2);

\draw[thick,red] (A2)..controls (.5,2) and (2,0)..(A4); 
\draw[thick,red] (A4)..controls (1,-1) and (0,0)..(A0); 
\coordinate (C) at (.6,-.3);
\coordinate (C2) at (1.3,1.5);
\draw[fill,white] (C2) circle[radius=2.5mm];
\draw[thick] (C2) circle[radius=2.5mm];
\draw (C2) node{1};
\begin{scope}
\clip (0,0) circle[radius=3cm];
\draw[blue] (T) node[above] {$\ast$};
\draw[very thick,blue] (T)--(T5)--(T4)--(T9) (T4)--(T7);
\draw[very thick,blue] (T2)--(T5)--(T6) (T)--(T3)--(T1)--(T8);
\draw[fill,white] (C) circle[radius=2.5mm];
\draw[thick] (C) circle[radius=2.5mm];
\draw (C) node{5};
\end{scope}
\begin{scope}[rotate=-144]
\clip (0,0) circle[radius=3cm];
\draw[thick,red] (-5,0) circle [radius=4.04cm];
\draw[fill,white] (-1,0) circle[radius=2.5mm];
\draw[thick] (-1,0) circle[radius=2.5mm];
\draw (-1,0) node{3};
\end{scope}
\begin{scope}
\clip (0,0) circle[radius=3cm];
\draw[thick,red] (-3.93,0) circle [radius=1.44cm];
\draw[fill,white] (-2.5,0) circle[radius=2.5mm];
\draw[thick] (-2.5,0) circle[radius=2.5mm];
\draw (-2.5,0) node{9};
\end{scope}
\begin{scope}[rotate=36]
\clip (0,0) circle[radius=3cm];
\draw[thick,red] (-3.93,0) circle [radius=1.44cm];
\draw[fill,white] (-2.5,0) circle[radius=2.5mm];
\draw[thick] (-2.5,0) circle[radius=2.5mm];
\draw (-2.5,0) node{7};
\end{scope}
\begin{scope}[rotate=72]
\clip (0,0) circle[radius=3cm];
\draw[thick,red] (-3.93,0) circle [radius=1.44cm];
\draw[fill,white] (-2.5,0) circle[radius=2.5mm];
\draw[thick] (-2.5,0) circle[radius=2.5mm];
\draw (-2.5,0) node{2};
\end{scope}
\begin{scope}[rotate=144]
\clip (0,0) circle[radius=3cm];
\draw[thick,red] (-3.93,0) circle [radius=1.44cm];
\draw[fill,white] (-2.5,0) circle[radius=2.5mm];
\draw[thick] (-2.5,0) circle[radius=2.5mm];
\draw (-2.5,0) node{6};
\end{scope}
\begin{scope}[rotate=-144]
\clip (0,0) circle[radius=3cm];
\draw[thick,red] (-3.93,0) circle [radius=1.44cm];
\draw[fill,white] (-2.5,0) circle[radius=2.5mm];
\draw[thick] (-2.5,0) circle[radius=2.5mm];
\draw (-2.5,0) node{8};
\end{scope}
\begin{scope}
\clip (0,0) circle[radius=3cm];
\draw[thick,red] (-5,0) circle [radius=4.04cm];
\draw[fill,white] (-1,0) circle[radius=2.5mm];
\draw[thick] (-1,0) circle[radius=2.5mm];
\draw (-1,0) node{4};
\end{scope}
\begin{scope}[xshift=8cm] 
\coordinate (T) at (-.2,1);
\coordinate (T5) at (0,-1);
\coordinate (T4) at (-1.7,0);
\coordinate (T9) at (-3,0);
\coordinate (T7) at (-2.5,-2);
\coordinate (T2) at (-1,-3);
\coordinate (T6) at (3,-2);
\coordinate (T3) at (1,1);
\coordinate (T1) at (2,.7);
\coordinate (T8) at (3,2);
\begin{scope}
\draw (T) node[above] {$\ast$};
\draw[thick,dashed] (T3)--(T)--(T5);
\draw[very thick,blue] (T5)--(T4)--(T9) (T4)--(T7);
\draw[very thick,blue] (T2)--(T5)--(T6) (T3)--(T1)--(T8);
\end{scope}
\draw[fill,white] (T1) circle[radius=2.5mm];
\draw[thick] (T1) circle[radius=2.5mm];
\draw (T1) node{1};
\draw[fill,white] (T2) circle[radius=2.5mm];
\draw[thick] (T2) circle[radius=2.5mm];
\draw (T2) node{2};
\draw[fill,white] (T3) circle[radius=2.5mm];
\draw[thick] (T3) circle[radius=2.5mm];
\draw (T3) node{3};
\draw[fill,white] (T4) circle[radius=2.5mm];
\draw[thick] (T4) circle[radius=2.5mm];
\draw (T4) node{4};
\draw[fill,white] (T5) circle[radius=2.5mm];
\draw[thick] (T5) circle[radius=2.5mm];
\draw (T5) node{5};
\draw[fill,white] (T6) circle[radius=2.5mm];
\draw[thick] (T6) circle[radius=2.5mm];
\draw (T6) node{6};
\draw[fill,white] (T7) circle[radius=2.5mm];
\draw[thick] (T7) circle[radius=2.5mm];
\draw (T7) node{7};
\draw[fill,white] (T8) circle[radius=2.5mm];
\draw[thick] (T8) circle[radius=2.5mm];
\draw (T8) node{8};
\draw[fill,white] (T9) circle[radius=2.5mm];
\draw[thick] (T9) circle[radius=2.5mm];
\draw (T9) node{9};
\end{scope}
\end{tikzpicture}
\caption{The exceptional sequence $(M_{23},S_6,M_{13}, M_{79}, M_{49}, S_4, S_7, S_2, S_8)$ from Example \ref{eg: rooted forest for A9} is drawn as the sequence of chords $(24,67,14,70,40,45,78,23,89)$ in red. The corresponding tree with root $(\ast)$ in the region containing the arc $0$ to $1$ is indicated in blue. Deleting the root gives the rooted labeled forest in Example \ref{eg: rooted forest for A9}.}
\label{fig: Goulden-Yong}
\end{center}
\end{figure}
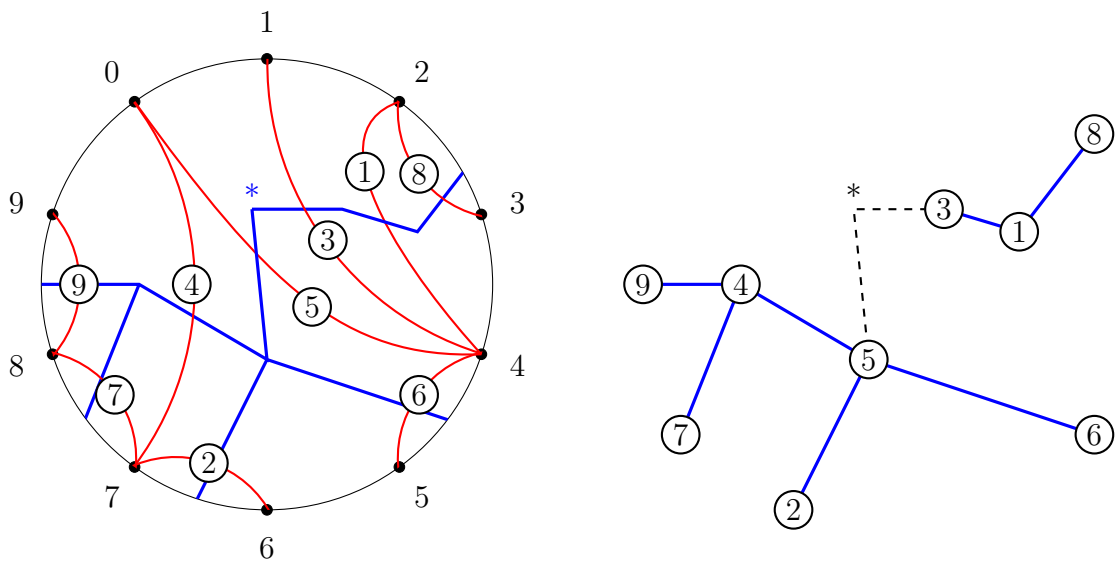

Goulden and Yong \cite{GY} (and earlier \cite{M}) showed that rooted trees with $n$ labeled edges are in bijection with factorizations of the $n+1$ cycle $(012\cdots n)$ as a product of $n$ transpositions $\tau_i=(a_i,b_i)$. The bijection is given by drawing chords (labeled with $i$) between points $a_i$ and $b_i$ in a circle as indicated in Figure \ref{fig: Goulden-Yong}. The dual graph of these chords is the tree. The root is the region adjacent to the arc from $0$ to $1$. Brady and Watt \cite{BW} showed that such factorizations of the $n+1$ cycle are in bijection with exceptional sequences where $(\tau_1,\cdots,\tau_n)=((a_1,b_1),\cdots,(a_n,b_n))$ corresponds to the exceptional sequence $(M_{a_1,b_1-1},\cdots,M_{a_n,b_{n}-1})$ where the indices are taken modulo $n+1$.

It is easy to see why the bijection between exceptional sequences and rooted labeled forests (or trees), as illustrated in Figure \ref{fig: Goulden-Yong} is the same as the bijection given in Theorem \ref{theorem A1}. Any chords in the construction, say $14$, separates the circle in two parts. One part contains the integers in the interval $[1,4]$. The other part contains the arc $01$ and therefore the root of the tree. So, any other chord $(a,b)$ will be further away from the root if and only if $[a,b]\subsetneq [1,4]$, or, equivalently, $[a,b)\subsetneq [1,4)$. Therefore, the rooted forest is the Hasse diagram of the set of corresponding intervals $[a_i,b_i)$ which are the supports of the objects in the exceptional sequence.

The correspondence between exceptional sequences and factorizations of the Coxeter element of the Weyl group (the $n+1$ cycle in the symmetric group for $A_n$) has been generalized to any quiver in \cite{Ig-Sh}. The correspondence with chord diagrams is also known for any orientation of $A_n$ \cite{GIMO}. See also \cite{IT} for the relation to noncrossing partitions.
%

\subsection{Generating function for exceptional sequences}

The following theorem and its proof come from \cite{GesselSeo}. (See also \cite{Sokal}.) This is a special case of a result from \cite{ER}.

\begin{thm}\label{thm: counting forests}
For each $n\ge1$, let $P_n(a,b,c)$ be the three variable polynomial for rooted labeled forest given by $P_n(a,b,c)=\sum a^pb^qc^r$ where the sum is over all rooted labeled forests, $r$ is the number of roots of the forest, $p$ is the number of nodes $v_i$ whose parent $v_j$ has label $j>i$, $q$ is the number of nodes $v_i$ with parent $v_j$ so that $j<i$. Then
\[
	P_n(a,b,c)=c(a+(n-1)b+c)(2a+(n-2)b+c)\cdots((n-1)a+b+c).
\]
\end{thm}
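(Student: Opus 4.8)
The plan is to prove the three-variable generating function identity
\[
	P_n(a,b,c)=c\prod_{i=1}^{n-1}\bigl(ia+(n-i)b+c\bigr)
\]
by induction on $n$, building each forest on $n$ vertices out of a forest on $n-1$ vertices by inserting the vertex $v_n$. The key observation is that vertex $n$ has the largest label, so it must be a leaf (it has no children), and every edge incident to $v_n$ connects it to a vertex with a smaller label; thus if $v_n$ is not a root, its parent $v_j$ has $j<n$, and $v_n$ contributes a factor of $b$ (it is a "descending" node, counted by $q$). If $v_n$ is a root it contributes a factor of $c$. The delicate point is that attaching $v_n$ as a child of an existing vertex $v_j$ can \emph{change the status} of $v_j$: if $v_j$ was previously a leaf, it is no longer one, but leaves only matter for the recursion through their parent edges, not directly—so in fact attaching $v_n$ below $v_j$ does \emph{not} alter the $p/q/c$ type of any of the original $n-1$ vertices, since those types depend only on the comparison of a node's label with its parent's label, and we are not changing any parent relationships among $\{1,\dots,n-1\}$.

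First I would set up the recursion precisely. Let $F'$ be a rooted labeled forest on the vertex set $\{1,\dots,n-1\}$ with type-monomial $a^{p'}b^{q'}c^{r'}$. To form a forest $F$ on $\{1,\dots,n\}$ from $F'$ we must decide the role of $v_n$: either (i) $v_n$ is the root of a new component (one choice), contributing an extra factor $c$; or (ii) $v_n$ is attached as a child of some vertex $v_j$ with $j\in\{1,\dots,n-1\}$ (that is $n-1$ choices), contributing an extra factor $b$ since $j<n$. In either case the monomial of $F$ is the monomial of $F'$ times $c$ or times $b$ respectively, and every forest on $\{1,\dots,n\}$ arises exactly once this way (delete $v_n$, which is always a leaf). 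Hence
\[
	P_n(a,b,c)=\bigl((n-1)b+c\bigr)\,P_{n-1}(a,b,c).
\]
Wait—this recursion gives $P_n=\bigl((n-1)b+c\bigr)\cdots(b+c)\cdot c$, which is $P_n(a,b,b)$ type behavior and is \emph{not} the claimed formula. So the naive "insert the largest vertex" recursion is too crude: it never produces the variable $a$. The resolution is that one must instead insert the vertex $1$ (the \emph{smallest} label), or equivalently run the recursion removing a vertex whose label is chosen more cleverly, so that both ascending and descending configurations appear.

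The correct approach, which is the one in \cite{GesselSeo}, is to encode a rooted labeled forest as a function and count via a Prüfer-type / functional-digraph argument. I would identify a rooted labeled forest on $\{1,\dots,n\}$ with the tree on $\{0,1,\dots,n\}$ obtained by adjoining a root $0$ (Definition \ref{def: rooted labeled forest}), orient all edges toward $0$, and record for each $i\in\{1,\dots,n\}$ its parent $\phi(i)\in\{0,1,\dots,n\}$. A node $v_i$ is: a root iff $\phi(i)=0$ (weight $c$); ascending ($p$-type) iff $\phi(i)>i$; descending ($q$-type) iff $0<\phi(i)<i$. The generating function $P_n(a,b,c)$ is then $\sum_\phi \prod_i w(i)$ where $w(i)=c$ if $\phi(i)=0$, $w(i)=a$ if $\phi(i)>i$, $w(i)=b$ if $0<\phi(i)<i$, and the sum is over all such $\phi$ whose functional digraph is a forest rooted at $0$ (i.e. is acyclic). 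The heart of the proof is a bijective/generating-function argument — a deletion of a \emph{leaf of largest label among the leaves}, or Gessel–Seo's insertion procedure — showing that the constraint "acyclic" exactly cuts the naive product $\prod_{i=1}^n\bigl((i-1)?\bigr)$ down to the stated telescoping product; concretely Gessel and Seo process vertices $n,n-1,\dots,1$ in order and show that when vertex $i$ is added there are exactly $(n-i)$ available slots of ascending type (weight $a$), $(i-1)$ of descending type (weight $b$), and one root slot (weight $c$), giving the factor $\bigl((n-i)a+(i-1)b+c\bigr)$, and reindexing $i\mapsto n+1-i$ yields $\prod_{i=1}^{n}\bigl((i-1)a+(n-i)b+c\bigr)$; the $i=1$ factor is $(n-1)b+c$ — hmm, that still is not symmetric with the claimed $c\cdot\prod_{i=1}^{n-1}(ia+(n-i)b+c)$. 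I would therefore carefully match indices against the statement of Theorem \ref{thm: counting forests}: the claimed product has $n-1$ nontrivial factors $ia+(n-i)b+c$ for $i=1,\dots,n-1$ together with a leading $c$, so the recursion I want is $P_n = \bigl((n-1)a+b+c\bigr)P_{n-1}$ obtained by inserting the vertex of \emph{smallest} label into a forest on $\{2,\dots,n\}$ — and the new vertex $1$ is ascending ($a$) if it has a parent (any of $n-1$ choices of parent, all with larger label), a root ($c$) otherwise, while the descending factor $b$ comes from the \emph{one} vertex among $\{2,\dots,n\}$ that may now be forced to have vertex $1$ inserted on an edge below it — no: rather, inserting $1$ as the parent-target changes no existing edge. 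The clean way: run the induction removing vertex $1$, noting $v_1$ is a leaf, and observe that relabeling $\{2,\dots,n\}$ down to $\{1,\dots,n-1\}$ turns a parent $j>1$ of an original vertex into $j-1$, preserving ascending/descending type, so that
\[
	P_n(a,b,c)=\bigl((n-1)a+b+c\bigr)\,P_{n-1}(a,b,c),\qquad P_1(a,b,c)=c,
\]
because $v_1$ contributes $c$ (as a root, one way) or $a$ (ascending, with any of the $n-1$ possible parents), \emph{and} one extra vertex of $F$—namely the parent of the component containing $1$, when we instead view things from the smallest-label side—acquires a $b$.

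\emph{The main obstacle}, and what I would spend the most care on, is getting this last bookkeeping exactly right: pinning down which vertex acquires the factor $b$ (hence the $+b$ in $(n-1)a+b+c$) when $v_1$ is deleted, and verifying that deleting the smallest-labelled vertex leaves a \emph{bijectively well-defined} smaller forest with the claimed monomial transformation. Concretely: in any rooted labeled forest on $\{1,\dots,n\}$, vertex $v_1$ is a leaf; if $v_1$ is a non-root child of $v_j$ then $j>1$ so $v_1$ is ascending (factor $a$), and $v_j$'s own type (determined by comparing $j$ with $j$'s parent) is unaffected; if $v_1$ is a root (factor $c$). In both cases $\{2,\dots,n\}$ inherits a rooted labeled forest whose monomial in $(a,b,c)$ is unchanged under the order-preserving relabeling to $\{1,\dots,n-1\}$. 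Conversely, from a forest $F'$ on $\{1,\dots,n-1\}$, relabel up to $\{2,\dots,n\}$ and attach $v_1$ either as a new root ($c$, one way) or below any of the $n-1$ vertices ($a$ each). This gives $P_n(a,b,c)=\bigl((n-1)a+c\bigr)P_{n-1}(a,b,c)$, which is \emph{still} missing the $+b$ — so in fact one must instead delete vertex $n$ for the $b$'s and vertex $1$ for the $a$'s simultaneously, or, as Gessel–Seo do, use a two-sided insertion on the chain and a clever weight-preserving bijection. I would follow \cite{GesselSeo} and reproduce their argument: represent the forest via its parent function, use the "cycle lemma" / Joyal-type bijection between endofunctions-with-one-extra-structure and forests, and track the weights $a,b,c$ through that bijection, where the product $\prod_{i=1}^{n-1}(ia+(n-i)b+c)$ emerges as the count of ways to insert vertices $2,3,\dots,n$ one at a time into a growing structure rooted at $1$, the $i$-th insertion (of vertex $i+1$, say) offering $i$ slots of one type and $n-1-i$ of the other plus a root slot. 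The obstacle is purely combinatorial and notational; once the insertion order and slot-counts are fixed correctly against the exact form of the target product, the induction closes immediately with base case $P_1(a,b,c)=c$.
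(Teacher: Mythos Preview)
Your proposal contains a fundamental error that derails every attempted recursion: you repeatedly assert that $v_n$ (and later $v_1$) must be a leaf. This is false. A rooted labeled forest carries no order constraint between a vertex and its children; the tree with root $v_n$ and children $v_1,\dots,v_{n-1}$ is a perfectly good rooted labeled forest, as is the tree with root $v_1$ and children $v_2,\dots,v_n$. So ``delete the leaf $v_n$'' and ``delete the leaf $v_1$'' are not well-defined operations, and the recursions $P_n=((n-1)b+c)P_{n-1}$ and $P_n=((n-1)a+c)P_{n-1}$ you derive from them are both invalid from the outset (not merely ``missing a term''). Your closing paragraph correctly senses that something is wrong but misdiagnoses it as a bookkeeping issue, and then defers to \cite{GesselSeo} without giving an argument.

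The paper's proof does remove $v_1$, but handles the fact that $v_1$ may have children by a variable substitution rather than by pretending $v_1$ is a leaf. When $v_1$ is removed, its children (all of which had weight $b$, since their parent has the smallest label) become roots of the residual forest on $\{2,\dots,n\}$. Rather than re-weight them to $c$, one keeps their weight $b$ and observes that the residual forest is then an arbitrary rooted labeled forest on $n-1$ vertices whose roots may carry weight $b$ \emph{or} $c$; its generating function is $P_{n-1}(a,b,b+c)$. If $v_1$ was a root, the removed weight is $c$, giving a contribution $c\,P_{n-1}(a,b,b+c)$. If $v_1$ was not a root, the removed weight is $a$, one must choose its parent among $n-1$ vertices, and exactly one of the roots of the residual forest (the root of the tree containing that parent) must carry weight $c$ rather than $b$; this last constraint contributes a factor $c/(b+c)$, giving $a(n-1)\dfrac{c}{b+c}P_{n-1}(a,b,b+c)$. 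Summing,
\[
P_n(a,b,c)=\frac{c\bigl((n-1)a+b+c\bigr)}{b+c}\,P_{n-1}(a,b,b+c),
\]
and since by induction $P_{n-1}(a,b,b+c)=(b+c)\prod_{j=1}^{n-2}\bigl(ja+(n-j)b+c\bigr)$, the $(b+c)$ cancels and the claimed product follows. The ``missing $+b$'' you were hunting for comes precisely from this substitution $c\mapsto b+c$, not from any single vertex acquiring a $b$.
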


\begin{proof} We associate a weight to each node of a forest. The weight will be $a,b,c$ depending on which type of node it is. Then the monomial $a^pb^qc^r$ for a forest is the product of the weights of its nodes.

The proof is by induction on $n$. For $n=1$, $P_1(a,b,c)=c$ since there is only one forest with one node which has weight $c$.

For $n\ge2$, we write $P_n(a,b,c)$ as the sum of two generating function
\[
	P_n(a,b,c)=P_n(a,b,c)_1+P_n(a,b,c)_2
\]
the first is for forest in which $v_1$ is a root. We remove the root $v_1$, decrease the labels of the other nodes by 1 and keep the weights of the children of $v_1$ to be $b$. This gives a rooted labeled forest with $n-1$ nodes in which the roots have weight either $b$ or $c$. Since a node with weight $c$ was removed we have:
\[
	P_n(a,b,c)_1=cP_{n-1}(a,b,b+c).
\]
For the remaining forests, we remove the node $v_1$ and make the children of $v_1$ roots, but keeping the weight $b$. We obtain a forest with roots of weight either $b$ or $c$. To reassemble the original forest, we need to choose one node to be the parent of $v_1$, we need to make the root of the tree in which this node lives to have weight $c$. The other $b$ weighted roots become the children of $v_1$. So,
\[
	P_n(a,b,c)_2=a(n-1)\frac{c}{b+c}P_{n-1}(a,b,b+c).
\]
A straightforward calculation proves the result.
\end{proof}

Let $P_{A_n}(a,b,c)=\sum \lambda_{pqr}a^pb^qc^r$ where $\lambda_{pqr}$ is the number of complete exceptional sequences for linear $A_n$ having 
\begin{enumerate}
\item $p$ objects $E_i$ which are relatively projective but not relatively injective,
\item $q$ objects which are relatively injective but not relatively projective
\item $r$ objects which are both relatively projective and relatively injective.
\end{enumerate}

\begin{thm}\label{thm: main theorem}(Theorem \ref{cor C})
\[
	P_{A_n}(a,b,c)=P_n(a,b,c)=c\prod_{j=1}^{n-1}(ja+(n-j)b+c).
\]
\end{thm}

\begin{proof}
This follows from Theorem \ref{thm: counting forests} and the properties of the bijection between complete exceptional sequences and rooted labeled forests given by Theorem \ref{theorem A2}.
\end{proof}

For example, when $n=3$ this is:
\[
	P_{A_3}(a,b,c)=P_3(a,b,c)=c(a+2b+c)(2a+b+c)=2a^2c+2b^2c+5abc+3ac^2+3bc^2+c^3.
\]

\begin{rem}\label{rem: result is only for straight orientation}
Theorem \ref{thm: main theorem} implies that there is only one exceptional sequence in which all objects are both relatively projective and relatively injective. (The $c^3$ term in $P_{A_3}$ above.) This is never true for other orientations of $A_n$ since such an exceptional sequence is given by a sequence of simple modules in which the support of each object is a source in the complement of the supports of the ones on its left and there are at least two such sequences for any nonlinear $A_n$. For example, for $A_3: 1\to 2\leftarrow 3$, there are two exceptional sequences with all terms relatively projective and relatively injective. This gives the term $2c^3$ in its three variable generating function:
\[
	P_{1\to 2\leftarrow 3}(a,b,c)=2a^2b+2b^2c+4abc+4ac^2+2bc^2+2c^3
\]
So, Theorem \ref{thm: main theorem} hold only for $A_n$ with straight orientation.
\end{rem}

\section{Braid group action on rooted labeled forests}\label{sec 3}\label{braid actionnnnnn}

It is well-known that the braid group acts transitively on the set of complete exceptional sequences for any Dynkin quiver, in particular, for type $A_n$ \cite{Crawley-Boevey}, \cite{RingelExcSeq}. In this section we construct an natural action of the braid group on rooted labeled forests and we show, in Theorem \ref{thm: action of sigma-i corresponds}, that this action corresponds to the action on complete exceptional sequences under the bijection given in the previous section. We also show that this action of the braid group is different from the one given in \cite{GG} since we are using a different bijection between the two sets.

The \emph{braid group} $B_n$ on $n$ strands is given by generators and relations as follows. The generators are $\sigma_i$ for $1\le i<n$ with two relations:
\begin{enumerate}
\item $\sigma_i,\sigma_j$ commute if $|j-i|\ge2$
\item $\sigma_i\sigma_{i+1}\sigma_i=\sigma_{i+1}\sigma_i\sigma_{i+1}$.
\end{enumerate}

Given a rooted labeled forest $F$, two node $v_i,v_j$ are said to be \emph{close} if one of the following holds.
\begin{enumerate}
\item $v_i,v_j$ are roots of distinct components of $F$.
\item $v_i,v_j$ are sibling, i.e., they have the same parent.
\item One of $v_i,v_j$ is the parent of the other.
\end{enumerate}
All remaining cases are not close.
 
To simplify notation, we add a root $v_0$, which we call the \emph{master root}, at the top of the forest giving a rooted tree $F_+$. Then Case 1 is a special case of Case 2: Roots in $F$ are sibling in $F_+$.

\subsection{Braid group action}\label{def: braid group action on forests}
The braid group $B_n$ acts on the set of rooted labeled forests on $n$ as follows. We give the action of $\sigma_i$ for $1\le i<n$. The partial ordering on the set $[n]=\{1,2,\cdots,n\}$ with Hasse diagram $F$ and the one with Hasse diagram $\sigma_i(F)$ are the same for nodes other than $v_i,v_{i+1}$. We keep the same notation for these other nodes in $\sigma_i(F)$. But, we denote by $v_i',v_{i+1}'$ the new nodes of $\sigma_i(F)$ with labels $i,i+1$. These are given as follows. See Figure \ref{fig: braid example}.

\underline{Case 0}: If nodes $v_i,v_j$ are not close then $\sigma_i(F)$ is given by switching the labels $i,i+1$, i.e., $\sigma_i(F)= F$ with nodes relabeled: $v_i'=v_{i+1},v_{i+1}'=v_i$.

\underline{Case 1}: Suppose $v_i$ is the parent of $v_{i+1}$, $v_k$ is the parent of $v_i$ in $F_+$, $X$ is the set of other children of $v_i$ and $Y$ is the set of children of $v_{i+1}$. Then $\sigma_i(F)_+$ is given by removing node $v_{i+1}$, relabeling $v_i$ to $v_{i+1}'$, then adding a new node $v_i'$ and making it a child of  $v_{i+1}'$ and a parent of all elements of $X$. Thus $Y\cup\{v_i'\}$ is the set of children of the new node $v_{i+1}'$ and $v_k$ is the parent of $v_{i+1}'$ in $\sigma_i(F)_+$. 

\underline{Case 2}: Suppose $v_i$ is a child of $v_{i+1}$, $v_k$ is the parent of $v_{i+1}$ in $F_+$, $Y$ is the set of other children of $v_{i+1}$ and $X$ is the set of children  of $v_i$. Then $\sigma_i(F)_+$ is given by removing node $v_{i+1}$, relabeling $v_i$ to $v_{i+1}'$, then adding a new node $v_i'$ which will be the parent of the elements of $Y$. The nodes $v_i',v_{i+1}'$ will be sibling in $\sigma_i(F)_+$, with parent $v_k$. 

\underline{Case 3}: Suppose $v_i,v_{i+1}$ are sibling with parent $v_k$ in $F_+$ and $Y,X$ are the sets of children of $v_i,v_{i+1}$, resp.  Then $\sigma_i(F)_+$ is given by removing node $v_{i+1}$, relabeling $v_i$ to $v_{i+1}'$, then adding a new node $v_i'$ with parent $v_k$ and children $v_{i+1}'$ and the elements of $X$.

Note that $\sigma_i$ takes examples from Cases 1,2,3 to examples from Cases 2,3,1 respectively. Furthermore, $\sigma_i^3$ is the identity map on these three cases. See Figure \ref{fig: braid example}

\begin{figure}[h]
\[
\xymatrixrowsep{12pt}\xymatrixcolsep{10pt}
\xymatrix{
v_k\ar@{-}[d] &&&& v_k\ar@{-}[d] &  & && v_k\ar@{-}[d]\ar@{-}[dd]&  &  \\ 
v_i\ar@{-}[dd]\ar@{-}[dr]  & Z\ar@{-}[ul] &&& v_{i+1}\ar@{-}[ddr] &Z\ar@{-}[ul]&&&  &  Z\ar@{-}[ul] &\\ 
 & v_{i+1}\ar@{-}[d] &   \ar@{=>}[r]^{\sigma_i}  & & v_i\ar@{-}[d]\ar@{-}[u] & &  \ar@{=>}[r]^{\sigma_i}  && v_{i+1}\ar@{-}[d] &  v_i\ar@{-}[uul]\ar@{-}[d] & \ar@{=>}[r]^{\sigma_i} & & (\text{Case 1}) \\ 
	X&Y&&&X  &Y  & && X & Y\\
	\text{Case 1} & &&&\text{Case 2} & &&&\text{Case 3} &&& 
	}
\]
\caption{Cases 1,2,3 of the braid move $\sigma_i$ are indicated. $v_k$ denotes the smallest node in $F_+$ which is above both $v_i$ and $v_{i+1}$ and $Z$ denotes the set of other children of $v_k$. $\sigma_i^3$ is the identity in all three cases.}\label{fig: braid example}
\end{figure}
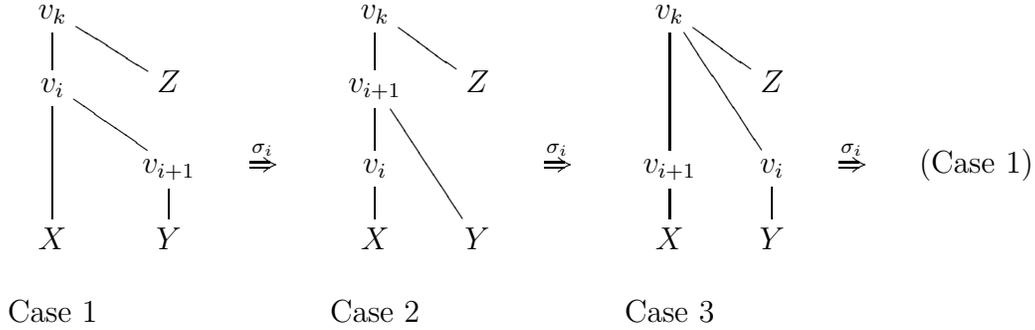

\begin{thm}\label{thm: action of sigma-i corresponds}
The action of $\sigma_i$ on the set of rooted labeled forests described above corresponds to the action of $\sigma_i$ on exceptional sequences under the bijection given in the previous section.
\end{thm}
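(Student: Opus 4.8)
The plan is to verify the correspondence case by case, matching the four combinatorial cases (Cases 0,1,2,3) of the braid move $\sigma_i$ on forests against the representation-theoretic action of $\sigma_i$ on the exceptional sequence $(E_1,\dots,E_n)$. Recall that $\sigma_i$ acts on an exceptional sequence by replacing the pair $(E_i,E_{i+1})$ with $(E_{i+1}',E_i)$ where $E_{i+1}'$ is the unique indecomposable making the result exceptional; its dimension vector is $\undim E_{i+1} - c\,\undim E_i$ for the appropriate nonnegative integer $c$ (namely $c=1$ when $\Hom(E_i,E_{i+1})\neq 0$ or $\Ext(E_i,E_{i+1})\neq 0$, and $c=0$ otherwise). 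The key observation is that two nodes $v_i,v_{i+1}$ are \emph{not close} in $F$ exactly when the supports of $E_i$ and $E_{i+1}$ are either disjoint-and-nonconsecutive or one properly contains the other with a node strictly between them in the Hasse diagram; in all such situations $\Hom$ and $\Ext$ vanish in both directions between $E_i$ and $E_{i+1}$, so $\sigma_i$ merely swaps the two labels. This handles Case 0 and shows it matches the "switch the labels" prescription.

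The heart of the argument is Cases 1, 2, 3, which correspond respectively to: $E_{i+1}$ a submodule or quotient of $E_i$ with $v_{i+1}$ a child of $v_i$ (Lemmas \ref{lem1: Ei submodule of Ej}, \ref{lem2: Ei quotient of Ej}); the symmetric situation with $v_i$ a child of $v_{i+1}$; and $E_i,E_{i+1}$ having consecutive supports with $v_i,v_{i+1}$ siblings (Lemma \ref{lem3: Ei, Ej consecutive}). In each case I would: (a) from the stated forest operation $\sigma_i(F)_+$, read off using Proposition \ref{prop: properties of Ei} what the supports of $E_i',E_{i+1}'$ must be — the weights $w(v_i'),w(v_{i+1}')$ are forced by the new tree structure, and since in all three cases the pair $\{v_i',v_{i+1}'\}$ together dominate the same subtree as $\{v_i,v_{i+1}\}$, the union of their supports is unchanged, only the split point moves; (b) independently compute $\sigma_i(E_i,E_{i+1}) = (E_{i+1}',E_i)$ on the representation side, using that $\undim E_{i+1}' = \undim E_{i+1} \pm \undim E_i$ (the sign determined by whether $E_{i+1}$ sits above or below $E_i$), which as an interval module means $\supp E_{i+1}'$ is obtained from $\supp E_{i+1}$ by gluing on or cutting off $\supp E_i$ at the shared endpoint; (c) check these two prescriptions agree, and that the Hasse diagram of the new exceptional sequence — its containment/disjointness pattern among all $n$ supports, which by Proposition \ref{prop: properties of Ei} again is governed by the combinatorics of $X,Y,Z$ — is exactly $\sigma_i(F)$. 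The cyclic relation $\sigma_i^3 = \mathrm{id}$ on these three cases (noted after the definition) provides a useful consistency check, and it suffices to verify one of the three transitions directly since the other two follow by applying the already-established braid action once more.

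The main obstacle I anticipate is bookkeeping in Case 1 (equivalently Case 2): when $v_{i+1}$ is a child of $v_i$ and has its own children $Y$, the new node $v_i'$ inherits the \emph{other} children $X$ of $v_i$, and one must check that the module $E_i'$ produced by the recursive construction of Section \ref{sec 2} for the forest $\sigma_i(F)$ really does equal the module $E_i$ computed by the representation-theoretic braid move — in particular that the interval $\supp E_i'$, which the forest construction places as a subinterval of $\supp E_{i+1}'$ determined by the relative labels of $X$ versus $i$, lands in exactly the position dictated by $\undim E_{i+1} - \undim E_i$ (or the reverse). This requires care about which end (top/socle) is shared and about the ordering constraints $i_1<\cdots<i_p<i<i_{p+1}<\cdots$ of Proposition \ref{prop: properties of Ei}(4); I would organize it by first pinning down the shared endpoint of $\supp E_i$ and $\supp E_{i+1}$ from the sub/quotient relationship, then tracking that endpoint through the forest move. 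Once Case 1 is done carefully, Cases 2 and 3 follow either by the same method or formally from $\sigma_i^3=\mathrm{id}$ together with the fact that the braid action on exceptional sequences also satisfies no relation beyond the braid relations, so matching on a generating situation suffices.
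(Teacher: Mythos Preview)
Your approach is essentially the same as the paper's: a direct case-by-case verification matching the four forest cases (not close, parent, child, sibling) to the four representation-theoretic situations (orthogonal, quotient, submodule, consecutive supports), identifying the new module $E_i'$ explicitly in each case and checking that the resulting Hasse diagram is $\sigma_i(F)$. One small correction: in the sibling case the new module is the extension $M_{ac}$ with $\undim E_i' = \undim E_i + \undim E_{i+1}$, so your ``nonnegative $c$'' formula is wrong there (your later ``$\pm$'' is right); also, the paper simply does all three nontrivial cases directly rather than invoking the $\sigma_i^3=\mathrm{id}$ shortcut, which would require separately checking that relation on the exceptional-sequence side.
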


\begin{cor}
The action of $\sigma_i$ on the set of rooted labeled forests satisfies the braid relations and therefore gives an action of the braid group.\qed
\end{cor}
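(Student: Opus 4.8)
The plan is to exploit uniqueness of braid mutation, so as to avoid verifying the braid relations directly. Recall (this is the content of Lemma~\ref{lem: permutation on tops}) that the braid generator $\sigma_i$ acts on a complete exceptional sequence $(E_1,\dots,E_n)$ by fixing every $E_j$ with $j\notin\{i,i+1\}$, putting $E_i$ into position $i+1$, and placing in position $i$ the \emph{unique} indecomposable $E_{i+1}'$ for which $(E_1,\dots,E_{i-1},E_{i+1}',E_i,E_{i+2},\dots,E_n)$ is again a complete exceptional sequence. Writing $F=H(E_\ast)$, it therefore suffices to prove that the complete exceptional sequence $(E_1^\#,\dots,E_n^\#):=H^{-1}(\sigma_i F)$ --- which is a complete exceptional sequence by Theorem~\ref{thm B1} --- satisfies $E_j^\#=E_j$ for all $j\neq i,i+1$ and $E_{i+1}^\#=E_i$. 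Granting this, $(E_1^\#,\dots,E_n^\#)$ is an exceptional sequence of the shape $(E_1,\dots,E_{i-1},\star,E_i,E_{i+2},\dots,E_n)$, hence equals $\sigma_i E_\ast$ by the uniqueness just recalled; that is, $H(\sigma_i E_\ast)=\sigma_i F=\sigma_i H(E_\ast)$, which is the theorem.

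To establish this identification I would treat the four cases of Section~\ref{def: braid group action on forests} separately, in each case comparing the recursive construction of Section~\ref{sec 2} (culminating in Theorem~\ref{thm B1} and summarised in Proposition~\ref{prop: properties of Ei}) applied to $F$ and to $\sigma_i F$. The observation that drives everything is that $i$ and $i+1$ are \emph{consecutive} integers, so no node other than $v_i,v_{i+1}$ carries a label strictly between them. Hence, at every node and for every child not among $v_i,v_{i+1}$, both the splitting of the children into ``submodule side'' and ``quotient side'' (which depends only on comparison of their labels with the parent's label) and the left-to-right order within each side (which depends only on their labels) are unchanged by the surgery; moreover the total weight of the block of coordinates occupied by the relocated pair is preserved. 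Feeding these two facts into the recursion shows that in $\sigma_i F$ every $v_j$ with $j\neq i,i+1$ retains the support of $E_j$, that each ``gap'' simple reappears in the same place, and that the node of $\sigma_i F$ labeled $i+1$ sits in the structural slot carrying the interval $\supp E_i$ (old $v_i$'s slot in Cases~0, 1, 2; the freshly inserted child $v_{i+1}'$ inside old $v_i$'s slot in Case~3) with weight $w(v_i)$, so $E_{i+1}^\#=E_i$. In Case~0 this is immediate, only the labels $i,i+1$ being swapped; in Cases~1--3 it reduces to checking that the interval $\supp E_i$ of length $w(v_i)$ simply gains or gives up the adjacent block $\supp E_{i+1}$, with $E_i^\#$ absorbing the complementary interval --- precisely the relation $\dim E_i^\#\equiv\dim E_{i+1}\pmod{\dim E_i}$ characterizing $E_{i+1}'$. (As a sanity check, the cyclic behaviour $1\to 2\to 3\to 1$ with $\sigma_i^3=\mathrm{id}$ on these three cases matches the three successive mutations of a pair $(M_{ab},M_{ad})$.)

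The main obstacle is the bookkeeping in Cases~1--3: besides the conclusion $E_{i+1}^\#=E_i$ and the invariance of the other $E_j$, one must check \emph{en route} that the recursion applied to $\sigma_i F$ reassembles the supports consistently --- that the gap of the enlarged or subdivided node lands where the unchanged children force it, and that the subtrees hanging below the old $v_i$ and $v_{i+1}$ get reattached to modules with exactly the right supports. Everything is forced once the ``no label between $i$ and $i+1$'' observation is in place, but it needs care precisely because the construction of Section~\ref{sec 2} is genuinely sensitive to labels; the point of the proof is that it is insensitive to the \emph{difference} between the two consecutive labels on all nodes except the two actually being moved.
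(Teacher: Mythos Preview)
Your overall strategy is exactly the paper's: the corollary is not proved by checking the braid relations on forests directly, but by establishing that $H$ intertwines the $\sigma_i$-action on forests with the known braid action on complete exceptional sequences (this is the content of Theorem~\ref{thm: action of sigma-i corresponds}), so the braid relations are inherited. The paper marks the corollary with \qed for precisely this reason.

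Where you differ from the paper is only in the direction of the verification of Theorem~\ref{thm: action of sigma-i corresponds}. The paper starts from $E_\ast$, computes $\sigma_i E_\ast$ explicitly (identifying $E_i'$ as a quotient, kernel, or extension in Cases b, c, d), and then reads off the Hasse diagram of support containments to see it is $\sigma_i F$. You instead start from $\sigma_i F$, run the recursive construction $H^{-1}$, and argue via the ``no label strictly between $i$ and $i+1$'' observation that all modules except the $(i,i+1)$ pair are unchanged and $E_{i+1}^\#=E_i$, then invoke uniqueness of the mutated object. Both routes are sound; the paper's is marginally lighter because reading off the Hasse diagram of an explicit sequence avoids tracking how the recursion reassigns supports and gaps below the moved nodes, which is the bookkeeping you correctly flag as the main obstacle in your approach. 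One small correction: the uniqueness you need is not the content of Lemma~\ref{lem: permutation on tops} (which is about tops), but the standard fact, used throughout the paper, that each object in a complete exceptional sequence is determined by its position and the others.
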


The following property of $\sigma_i$, which follows easily from its description, will be needed in the next section.

\begin{prop}\label{root prop}
The vertex $v_{i+1}$ is a root of the forest $F$ if and only if $v_i'$ is a root of $\sigma_iF$.\qed
\end{prop}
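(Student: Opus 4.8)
The plan is simply to run through the four cases in the definition of $\sigma_i$ from Section \ref{def: braid group action on forests}, checking the stated equivalence directly in each one. Throughout, recall that a node is a root of $F$ exactly when its parent in $F_+$ is the master root $v_0$, so the claim becomes: the parent of $v_{i+1}$ in $F_+$ is $v_0$ if and only if the parent of $v_i'$ in $\sigma_i(F)_+$ is $v_0$.

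In Case 0 the nodes $v_i,v_{i+1}$ are not close, and $\sigma_i(F)$ is $F$ with the labels $i,i+1$ interchanged, so $v_i'$ is literally the old node $v_{i+1}$ in an unchanged forest; the equivalence is immediate. In Case 1, $v_i$ is the parent of $v_{i+1}$, so $v_{i+1}$ is not a root of $F$; and $\sigma_i$ makes $v_i'$ a child of $v_{i+1}'$, so $v_i'$ is not a root of $\sigma_i(F)$ either, and both sides of the equivalence are false. In Cases 2 and 3, let $v_k$ denote the parent of $v_{i+1}$ in $F_+$ (which in Case 3 is also the parent of $v_i$); then $v_{i+1}$ is a root of $F$ exactly when $v_k=v_0$. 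On the other hand, the recipe for $\sigma_i$ assigns to $v_i'$ the parent $v_k$ in $\sigma_i(F)_+$ --- as a sibling of $v_{i+1}'$ in Case 2, and directly in Case 3 --- so $v_i'$ is a root of $\sigma_i(F)$ exactly when $v_k=v_0$. This settles all four cases.

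The only point requiring care --- and hence the main, though very mild, obstacle --- is confirming that the case division is exhaustive for the configurations in question. In particular, when $v_{i+1}$ is a root of $F$ but $v_i$ is not, one must check that this falls under Case 0 (when $v_{i+1}$ is not the parent of $v_i$) or Case 2 (when it is), and never under Case 1 or Case 3; once this is observed, each of the four checks above is a one-line consequence of the defining description of $\sigma_i$, and the dual checks for roots in $\sigma_i(F)$ going to roots in $F$ under $\sigma_i^{-1}$ follow by the same bookkeeping (or from $\sigma_i^3=\mathrm{id}$ on Cases 1,2,3 together with Case 0 being an involution).
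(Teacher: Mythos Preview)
Your case-by-case verification is correct and is exactly what the paper intends: the proposition is stated with a \qed\ and the preceding sentence says it ``follows easily from its description,'' i.e., from the four-case definition of $\sigma_i$. You have simply written out those easy checks in full.
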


\subsection{Proof of Theorem \ref{thm: action of sigma-i corresponds}}

\begin{lem}\label{lem: Ei and Ei+1} Let $F$ be a rooted labeled forest with associated exceptional sequence $E_\ast=(E_1,\cdots,E_n)$. Then for any $i<n$ we have the following.
\begin{enumerate}
\item $E_i$ is a submodule of $E_{i+1}$ if and only if $v_i$ is a child of $v_{i+1}$ in $F$.
\item $E_{i+1}$ is a quotient of $E_i$ if and only if $v_{i+1}$ is a child of $v_i$.
\item $E_i,E_{i+1}$ have consecutive supports if and only if $v_i,v_{i+1}$ are sibling in $F_+$.
\item $E_i,E_{i+1}$ are $\Hom$ and $\Ext$ orthogonal (in both directions) if and only if none of the above hold, i.e., if and only if $v_i,v_{i+1}$ are not close.
\end{enumerate}
\end{lem}

\begin{proof}
If $E_i\subset E_{i+1}$ then, by Lemma \ref{lem1: Ei submodule of Ej}, $v_i$ must be a child of $v_{i+1}$ since there are no $k$ with $i<k<i+1$. If $E_{i+1}$ is a quotient of $E_i$ then, by Lemma \ref{lem2: Ei quotient of Ej}, $v_{i+1}$ is a child of $v_i$ since there is no $k$ between $i$ and $i+1$. The converses are given by Proposition \ref{prop: properties of Ei} (a) and (b) which follow from Proposition \ref{prop: children of the root}. This proves (1) and (2). 

If $E_i,E_{i+1}$ have consecutive supports, Lemma \ref{lem3: Ei, Ej consecutive} implies $v_i,v_{i+1}$ must be sibling since we must have $i=k<\ell=j=i+1$. The converse is given by Proposition \ref{prop: properties of Ei} (c) and (d) and by Proposition \ref{prop: F disconnected case} when $v_i,v_{i+1}$ are both roots. This proves (3). All other cases are in (4).
\end{proof}

\begin{proof}[Proof of Theorem \ref{thm: action of sigma-i corresponds}]
The braid move $\sigma_i$ acts on $E_\ast$ in all cases by moving $E_i$ to position $i+1$ and inserting a uniquely determined new object $E_{i}'$ in position $i$ to give a new exceptional sequence
\[
	\sigma_i(E_\ast)=(E_1,\cdots,E_{i-1},E_{i}',E_i,E_{i+2},\cdots,E_n).
\]

\underline{Case a}: If $E_i,E_{i+1}$ are $\Hom$ and $\Ext$ orthogonal then they commute and $E_{i}'=E_{i+1}$.

\underline{Case b}: Suppose that $E_i$ is a submodule of $E_{i+1}$. Then we know that $E_i'$ is the quotient module $E_{i+1}/E_i$. By Lemma \ref{lem: Ei and Ei+1}(1), $v_i$ is a child of $v_{i+1}$. Let $Y$ denote the other children of $v_{i+1}$. These have disjoint supports whose union is the support of $E_{i+1}/E_i$ minus one point. Therefore, $Y$ is the set of children of $v_i'$ the node in $\sigma_i(F)$ corresponding to $E_i'$. $E_i$ has not changed except for its label which is now $i+1$. So, it has the same set of children. This is Case 2 in subsection \ref{def: braid group action on forests} and $\sigma_i(F)$ is the Hasse diagram of $\sigma_i(E_\ast)$.

\underline{Case c}: Suppose that $E_{i+1}$ is a quotient of $E_i$, say $E_{i+1}=M_{ab}$ and $E_i=M_{ac}$. Then $E_i'=M_{b+1,c}$ is the kernel of the epimorphism $E_i\onto E_{i+1}$. By Lemma \ref{lem: Ei and Ei+1}(2), $v_{i+1}$ is a child of $v_i$. If $X$ is the set of the other children of $v_i$ in $F$ then the corresponding modules must have support in $[a,c]$ but disjoint from $[a,b]$. So, their supports are in $\supp E_i'=[b+1,c]$. So, $X$ is the set of children of the new $v_i'$ in $\sigma_i(F)$. Since $v_{i+1}$ in $F$ has been removed, its children $Y$ become children of $v_i$, the new $v_{i+1}'\in \sigma_i(F)$.

\underline{Case d}: Suppose that $v_i,v_{i+1}$ are sibling in $F_+$ with sets of children $Y,X$, resp. Then $E_i,E_{i+1}$ must have disjoint consecutive supports by Lemma \ref{lem: Ei and Ei+1}(3), say $\supp E_i=[a,b]$, $\supp E_{i+1}=[b+1,c]$. Since $E_i$ extends $E_{i+1}$, the new exceptional pair $(E_i',E_i)$ must be $(M_{ac},M_{ab})$. The children of the new $v_i'$ with module $M_{ac}$ are the new $v_{i+1}'$ with module $M_{ab}$ and $X$. The children of $v_{i+1}'$ with module $M_{ab}$ are $Y$.

This concludes the proof of Theorem \ref{thm: action of sigma-i corresponds}. See Figure \ref{fig: Braid example 2} for an example.
\end{proof}

Figure \ref{fig: Braid example 2} illustrates the sequence of five rooted labeled forests corresponding to the following five complete exceptional sequences for $A_{10}$:
 \[
 \begin{array}{rl}
&(M_{99},M_{1,10},M_{46},M_{16},M_{12},\ M_{79},M_{77},M_{11},M_{44},M_{55})\\
\xrightarrow{\sigma_3}&(M_{99},M_{1,10},M_{13},M_{46},M_{12},\ M_{79},M_{77},M_{11},M_{44},M_{55})\\
\xrightarrow{\sigma_2}&(M_{99},M_{4,10},M_{1,10},M_{46},M_{12},\ M_{79},M_{77},M_{11},M_{44},M_{55})\\
\xrightarrow{\sigma_5}&(M_{99},M_{4,10},M_{1,10},M_{46},M_{79},\ M_{12},M_{77},M_{11},M_{44},M_{55})\\
\xrightarrow{\sigma_4}&(M_{99},M_{4,10},M_{1,10},M_{49},M_{46},\ M_{12},M_{77},M_{11},M_{44},M_{55})
\end{array}
\]

\begin{figure}[ht]
\begin{center}
\begin{tikzpicture}[scale=.9]
\begin{scope}[xshift=0cm]
\draw[fill] (0,0) circle[radius=.7mm];
\draw[fill] (1,0) circle[radius=.7mm];
\draw[fill] (2,0) circle[radius=.7mm];
\draw[fill] (3,0) circle[radius=.7mm];
\draw[fill] (4,0) circle[radius=.7mm];
\draw[fill] (.5,1) circle[radius=.7mm];
\draw[fill] (2,1) circle[radius=.7mm];
\draw[fill] (3.5,1) circle[radius=.7mm];
\draw[fill] (3,2) circle[radius=.7mm];
\draw[fill] (2,3) circle[radius=.7mm];
\draw[thick] (0,0)--(0.5,1)--(1,0) (0.5,1)--(2,3)--(3,2)--(2,1)--(2,0) (3,2)--(3.5,1)--(3,0) (4,0)--(3.5,1);
\draw[white,fill] (0,0) circle[radius=3mm];
\draw (0,0) circle[radius=3mm];
\draw (0,0) node{1};
\draw[white,fill] (.5,1) circle[radius=3mm];
\draw (.5,1) circle[radius=3mm];
\draw (.5,1) node{6};
\draw[white,fill] (3.5,1) circle[radius=3mm];
\draw (3.5,1) circle[radius=3mm];
\draw (3.5,1) node{3};
\draw[white,fill] (3,2) circle[radius=3mm];
\draw (3,2) circle[radius=3mm];
\draw (3,2) node{4};
\draw[white,fill] (1,0) circle[radius=3mm];
\draw (1,0) circle[radius=3mm];
\draw (1,0) node{7};
\draw[white,fill] (2,0) circle[radius=3mm];
\draw (2,0) circle[radius=3mm];
\draw (2,0) node{8};
\draw[white,fill] (2,1) circle[radius=3mm];
\draw (2,1) circle[radius=3mm];
\draw (2,1) node{5};
\draw[white,fill] (2,3) circle[radius=3mm];
\draw (2,3) circle[radius=3mm];
\draw (2,3) node{2};
\draw[white,fill] (3,0) circle[radius=3mm];
\draw (3,0) circle[radius=3mm];
\draw (3,0) node{9};
\draw[white,fill] (4,0) circle[radius=3mm];
\draw (4,0) circle[radius=3mm];
\draw (4,0) node{10};
\end{scope}
\begin{scope}[xshift=4.5cm, yshift=1.5cm]
\draw[thick,->] (.1,0)--(.9,0);
\draw (.5,.3)node{$\sigma_3$};
\end{scope}
\begin{scope}[xshift=6cm]
\draw[fill] (0,0) circle[radius=.7mm];
\draw[fill] (1,0) circle[radius=.7mm];
\draw[fill] (2,0) circle[radius=.7mm];
\draw[fill] (3,0) circle[radius=.7mm];
\draw[fill] (4,0) circle[radius=.7mm];
\draw[fill] (.5,1) circle[radius=.7mm];
\draw[fill] (2,1) circle[radius=.7mm];
\draw[fill] (3.5,1) circle[radius=.7mm];
\draw[fill] (2,3) circle[radius=.7mm];
\draw[thick] (0,0)--(0.5,1)--(1,0) (0.5,1)--(2,3)--(2,1)--(2,0) (2,3)--(3.5,1)--(3,0) (4,0)--(3.5,1);
\draw[white,fill] (0,0) circle[radius=3mm];
\draw (0,0) circle[radius=3mm];
\draw (0,0) node{1};
\draw[white,fill] (.5,1) circle[radius=3mm];
\draw (.5,1) circle[radius=3mm];
\draw (.5,1) node{6};
\draw[white,fill] (3.5,1) circle[radius=3mm];
\draw (3.5,1) circle[radius=3mm];
\draw (3.5,1) node{4};
\draw[white,fill] (2,2) circle[radius=3mm];
\draw (2,2) circle[radius=3mm];
\draw (2,2) node{3};
\draw[white,fill] (1,0) circle[radius=3mm];
\draw (1,0) circle[radius=3mm];
\draw (1,0) node{7};
\draw[white,fill] (2,0) circle[radius=3mm];
\draw (2,0) circle[radius=3mm];
\draw (2,0) node{8};
\draw[white,fill] (2,1) circle[radius=3mm];
\draw (2,1) circle[radius=3mm];
\draw (2,1) node{5};
\draw[white,fill] (2,3) circle[radius=3mm];
\draw (2,3) circle[radius=3mm];
\draw (2,3) node{2};
\draw[white,fill] (3,0) circle[radius=3mm];
\draw (3,0) circle[radius=3mm];
\draw (3,0) node{9};
\draw[white,fill] (4,0) circle[radius=3mm];
\draw (4,0) circle[radius=3mm];
\draw (4,0) node{10};
\end{scope}
\begin{scope}[xshift=10.5cm, yshift=1.5cm]
\draw[thick,->] (.1,0)--(.9,0);
\draw (.5,.3)node{$\sigma_2$};
\end{scope}
\begin{scope}[xshift=12cm]
\draw[thick] (0,0)--(0.5,1)--(1,0) 
(0.5,1)--(2,3) 
(1.25,2)--(2.5,1)--(2,0) 
(2,3)--(3.5,1) 
(4,0)--(3.5,1)
(3,0)--(2.5,1);
\draw[white,fill] (0,0) circle[radius=3mm];
\draw (0,0) circle[radius=3mm];
\draw (0,0) node{1};
\draw[white,fill] (.5,1) circle[radius=3mm];
\draw (.5,1) circle[radius=3mm];
\draw (.5,1) node{6};
\draw[white,fill] (3.5,1) circle[radius=3mm];
\draw (3.5,1) circle[radius=3mm];
\draw (3.5,1) node{5};
\draw[white,fill] (1.25,2) circle[radius=3mm];
\draw (1.25,2) circle[radius=3mm];
\draw (1.25,2) node{2};
\draw[white,fill] (1,0) circle[radius=3mm];
\draw (1,0) circle[radius=3mm];
\draw (1,0) node{7};
\draw[white,fill] (2,0) circle[radius=3mm];
\draw (2,0) circle[radius=3mm];
\draw (2,0) node{9};
\draw[white,fill] (2.5,1) circle[radius=3mm];
\draw (2.5,1) circle[radius=3mm];
\draw (2.5,1) node{4};
\draw[white,fill] (2,3) circle[radius=3mm];
\draw (2,3) circle[radius=3mm];
\draw (2,3) node{3};
\draw[white,fill] (3,0) circle[radius=3mm];
\draw (3,0) circle[radius=3mm];
\draw (3,0) node{10};
\draw[white,fill] (4,0) circle[radius=3mm];
\draw (4,0) circle[radius=3mm];
\draw (4,0) node{8};
\end{scope}
\begin{scope}[xshift=.5cm, yshift=-4cm]
\draw[thick,->] (.1,0)--(.9,0);
\draw (.5,.3)node{$\sigma_5$};
\end{scope}
\begin{scope}[xshift=2.5cm,yshift=-5.5cm]
\draw[thick] (0,0)--(0.5,1)--(1,0) 
(0.5,1)--(2,3) 
(1.25,2)--(2.5,1)--(2,0) 
(2,3)--(3.5,1) 
(4,0)--(3.5,1)
(3,0)--(2.5,1);
\draw[white,fill] (0,0) circle[radius=3mm];
\draw (0,0) circle[radius=3mm];
\draw (0,0) node{1};
\draw[white,fill] (.5,1) circle[radius=3mm];
\draw (.5,1) circle[radius=3mm];
\draw (.5,1) node{5};
\draw[white,fill] (3.5,1) circle[radius=3mm];
\draw (3.5,1) circle[radius=3mm];
\draw (3.5,1) node{6};
\draw[white,fill] (1.25,2) circle[radius=3mm];
\draw (1.25,2) circle[radius=3mm];
\draw (1.25,2) node{2};
\draw[white,fill] (1,0) circle[radius=3mm];
\draw (1,0) circle[radius=3mm];
\draw (1,0) node{7};
\draw[white,fill] (2,0) circle[radius=3mm];
\draw (2,0) circle[radius=3mm];
\draw (2,0) node{9};
\draw[white,fill] (2.5,1) circle[radius=3mm];
\draw (2.5,1) circle[radius=3mm];
\draw (2.5,1) node{4};
\draw[white,fill] (2,3) circle[radius=3mm];
\draw (2,3) circle[radius=3mm];
\draw (2,3) node{3};
\draw[white,fill] (3,0) circle[radius=3mm];
\draw (3,0) circle[radius=3mm];
\draw (3,0) node{10};
\draw[white,fill] (4,0) circle[radius=3mm];
\draw (4,0) circle[radius=3mm];
\draw (4,0) node{8};
\end{scope}
\begin{scope}[xshift=7.5cm, yshift=-4cm]
\draw[thick,->] (.1,0)--(.9,0);
\draw (.5,.3)node{$\sigma_4$};
\end{scope}
\begin{scope}[xshift=9.5cm,yshift=-5.6cm]
\draw[thick] (0,0)--(1.25,2)--(1,0) 
(1.25,2)--(2.75,4) 
(1.25,2)--(2.5,1)--(2,0) 
(2.75,4)--(4,1)--(4,0)
(3,0)--(2.5,1);
\draw[white,fill] (0,0) circle[radius=3mm];
\draw (0,0) circle[radius=3mm];
\draw (0,0) node{1};
\draw[white,fill] (4,1) circle[radius=3mm];
\draw (4,1) circle[radius=3mm];
\draw (4,1) node{6};
\draw[white,fill] (1.25,2) circle[radius=3mm];
\draw (1.25,2) circle[radius=3mm];
\draw (1.25,2) node{4};
\draw[white,fill] (1,0) circle[radius=3mm];
\draw (1,0) circle[radius=3mm];
\draw (1,0) node{7};
\draw[white,fill] (2,0) circle[radius=3mm];
\draw (2,0) circle[radius=3mm];
\draw (2,0) node{9};
\draw[white,fill] (2.5,1) circle[radius=3mm];
\draw (2.5,1) circle[radius=3mm];
\draw (2.5,1) node{5};
\draw[white,fill] (2,3) circle[radius=3mm];
\draw (2,3) circle[radius=3mm];
\draw (2,3) node{2};
\draw[white,fill] (2.75,4) circle[radius=3mm];
\draw (2.75,4) circle[radius=3mm];
\draw (2.75,4) node{3};
\draw[white,fill] (3,0) circle[radius=3mm];
\draw (3,0) circle[radius=3mm];
\draw (3,0) node{10};
\draw[white,fill] (4,0) circle[radius=3mm];
\draw (4,0) circle[radius=3mm];
\draw (4,0) node{8};
\end{scope}
\end{tikzpicture}
\caption{Braid moves $\sigma_5,\sigma_3,\sigma_2,\sigma_4$ illustrated above are examples of Cases a,b,c,d in the proof of Theorem \ref{thm: action of sigma-i corresponds}.}
\label{fig: Braid example 2}
\end{center}
\end{figure}
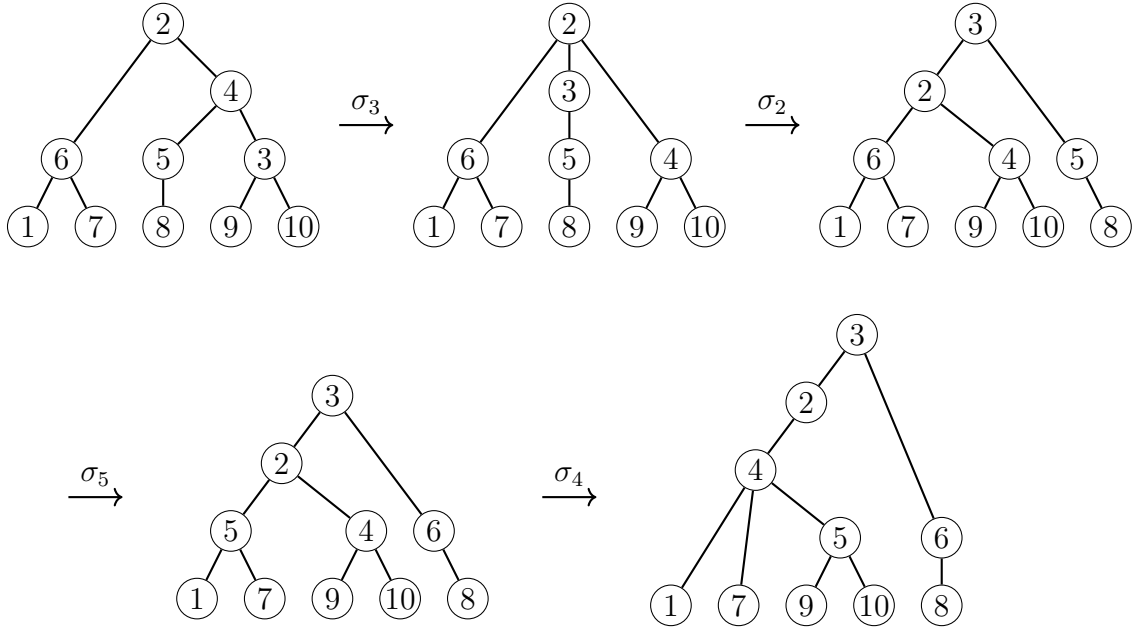

\subsection{Examples: $A_2$ and $A_3$}

For $n=2$, there are 3 rooted labeled forests and $\sigma_1$ permutes them in a 3-cycle:
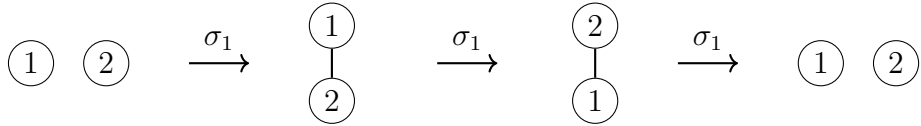
\begin{figure}[ht]
\begin{center}
\begin{tikzpicture}
\begin{scope}
\draw[white,fill] (0,0) circle[radius=3mm];
\draw (0,0) circle[radius=3mm];
\draw[white,fill] (1,0) circle[radius=3mm];
\draw (1,0) circle[radius=3mm];
\draw (0,0) node{1};
\draw (1,0) node{2};
\end{scope}
\draw[thick,->] (2.1,0)--(2.9,0);
\draw (2.5,.3)node{$\sigma_1$};
\begin{scope}[xshift=3.3cm]
\draw[thick,->] (2.1,0)--(2.9,0);
\draw (2.5,.3)node{$\sigma_1$};
\end{scope}
\begin{scope}[xshift=6.5cm]
\draw[thick,->] (2.1,0)--(2.9,0);
\draw (2.5,.3)node{$\sigma_1$};
\end{scope}
\begin{scope}[xshift=3.5cm]
\draw[thick] (0.5,.5)--(0.5,-.5);
\draw[white,fill] (0.5,.5) circle[radius=3mm];
\draw (0.5,.5) circle[radius=3mm];
\draw[white,fill] (0.5,-.5) circle[radius=3mm];
\draw (0.5,-.5) circle[radius=3mm];
\draw (0.5,.5) node{1};
\draw (.5,-.5) node{2};
\end{scope}
\begin{scope}[xshift=7cm]
\draw[thick] (0.5,.5)--(0.5,-.5);
\draw[white,fill] (0.5,.5) circle[radius=3mm];
\draw (0.5,.5) circle[radius=3mm];
\draw[white,fill] (0.5,-.5) circle[radius=3mm];
\draw (0.5,-.5) circle[radius=3mm];
\draw (0.5,.5) node{2};
\draw (.5,-.5) node{1};
\end{scope}
\begin{scope}[xshift=10.5cm]
\draw[white,fill] (0,0) circle[radius=3mm];
\draw (0,0) circle[radius=3mm];
\draw[white,fill] (1,0) circle[radius=3mm];
\draw (1,0) circle[radius=3mm];
\draw (0,0) node{1};
\draw (1,0) node{2};
\end{scope}
\end{tikzpicture}
\caption{The three rooted labeled forests on $n=2$ are cyclically permuted by $\sigma_1$.}
\label{fig: A2 example}
\end{center}
\end{figure}

For $n=3$ there are 16 rooted labeled forests. Figure \ref{fig: A3 example 1} illustrates 8 of them. Figure \ref{fig: A3 example AR} illustrates how the rooted labeled forest can be visualized when the exceptional sequence is embedded in the Auslander-Reiten quiver of the path algebra.
\begin{figure}[ht]
\begin{center}
\begin{tikzpicture}
\begin{scope}[yshift=2.5cm] 
\begin{scope}[xshift=-3.8cm]
\draw[thick,->] (2.1,0)--(2.9,0);
\draw (2.5,.3)node{$\sigma_1$};
\end{scope}
\begin{scope}[xshift=-4cm]
\draw[white,fill] (1.5,0) circle[radius=3mm];
\draw (1.5,0) circle[radius=3mm];
\draw[thick] (0.5,.5)--(0.5,-.5);
\draw[white,fill] (0.5,.5) circle[radius=3mm];
\draw (0.5,.5) circle[radius=3mm];
\draw[white,fill] (0.5,-.5) circle[radius=3mm];
\draw (0.5,-.5) circle[radius=3mm];
\draw (0.5,.5) node{2};
\draw (.5,-.5) node{1};
\draw (1.5,0) node{3};
\end{scope}
\begin{scope} 
\draw[white,fill] (0,0) circle[radius=3mm];
\draw (0,0) circle[radius=3mm];
\draw[white,fill] (1,0) circle[radius=3mm];
\draw (1,0) circle[radius=3mm];
\draw[white,fill] (2,0) circle[radius=3mm];
\draw (2,0) circle[radius=3mm];
\draw (0,0) node{1};
\draw (1,0) node{2};
\draw (2,0) node{3};
\end{scope}
\draw[thick,->] (3.1,0)--(3.9,0);
\draw (3.5,.3)node{$\sigma_2$};
\begin{scope}[xshift=5.3cm]
\draw[thick,->] (2.1,0)--(2.9,0);
\draw (2.5,.3)node{$\sigma_1$};
\end{scope}
\begin{scope}[xshift=4.5cm]
\draw[white,fill] (1.5,0) circle[radius=3mm];
\draw (1.5,0) circle[radius=3mm];
\draw[thick] (0.5,.5)--(0.5,-.5);
\draw[white,fill] (0.5,.5) circle[radius=3mm];
\draw (0.5,.5) circle[radius=3mm];
\draw[white,fill] (0.5,-.5) circle[radius=3mm];
\draw (0.5,-.5) circle[radius=3mm];
\draw (0.5,.5) node{2};
\draw (.5,-.5) node{3};
\draw (1.5,0) node{1};
\end{scope}
\begin{scope}[xshift=9cm]
\draw[thick] (0.5,-.5)--(1,.5)--(1.5,-.5);
\draw[white,fill] (1,.5) circle[radius=3mm];
\draw (1,.5) circle[radius=3mm];
\draw[white,fill] (1.5,-.5) circle[radius=3mm];
\draw (1.5,-.5) circle[radius=3mm];
\draw[white,fill] (0.5,-.5) circle[radius=3mm];
\draw (0.5,-.5) circle[radius=3mm];
\draw (1,.5) node{1};
\draw (.5,-.5) node{2};
\draw (1.5,-.5) node{3};
\end{scope}
\end{scope} 
\begin{scope}[xshift=-2.5cm] 
\begin{scope} 
\draw[thick] (0,1)--(0,-1);
\draw[white,fill] (0,0) circle[radius=3mm];
\draw (0,0) circle[radius=3mm];
\draw[white,fill] (0,1) circle[radius=3mm];
\draw (0,1) circle[radius=3mm];
\draw[white,fill] (0,-1) circle[radius=3mm];
\draw (0,-1) circle[radius=3mm];
\draw (0,0) node{2};
\draw (0,1) node{1};
\draw (0,-1) node{3};
\end{scope}
\begin{scope}[xshift=-2cm]
\draw[thick,->] (.1,0)--(.9,0);
\draw (.5,.3)node{$\sigma_2$};
\end{scope}
\begin{scope}[xshift=-2cm]
\draw[thick,->] (3.1,0)--(3.9,0);
\draw (3.5,.3)node{$\sigma_1$};
\end{scope}
\begin{scope}[xshift=2.5cm]
\draw[thick] (0.5,-.5)--(1,.5)--(1.5,-.5);
\draw[white,fill] (1,.5) circle[radius=3mm];
\draw (1,.5) circle[radius=3mm];
\draw[white,fill] (1.5,-.5) circle[radius=3mm];
\draw (1.5,-.5) circle[radius=3mm];
\draw[white,fill] (0.5,-.5) circle[radius=3mm];
\draw (0.5,-.5) circle[radius=3mm];
\draw (1,.5) node{2};
\draw (.5,-.5) node{1};
\draw (1.5,-.5) node{3};
\end{scope}
\begin{scope}[xshift=2cm]
\draw[thick,->] (3.1,0)--(3.9,0);
\draw (3.5,.3)node{$\sigma_2$};
\end{scope}
\begin{scope}[xshift=7cm]
\draw[thick] (0,1)--(0,-1);
\draw[white,fill] (0,0) circle[radius=3mm];
\draw (0,0) circle[radius=3mm];
\draw[white,fill] (0,1) circle[radius=3mm];
\draw (0,1) circle[radius=3mm];
\draw[white,fill] (0,-1) circle[radius=3mm];
\draw (0,-1) circle[radius=3mm];
\draw (0,1) node{3};
\draw (0,0) node{2};
\draw (0,-1) node{1};
\end{scope}
\begin{scope}[xshift=5cm]
\draw[thick,->] (3.1,0)--(3.9,0);
\draw (3.5,.3)node{$\sigma_1$};
\end{scope}
\begin{scope}[xshift=9.5cm]
\draw[thick] (0.5,-.5)--(1,.5)--(1.5,-.5);
\draw[white,fill] (1,.5) circle[radius=3mm];
\draw (1,.5) circle[radius=3mm];
\draw[white,fill] (1.5,-.5) circle[radius=3mm];
\draw (1.5,-.5) circle[radius=3mm];
\draw[white,fill] (0.5,-.5) circle[radius=3mm];
\draw (0.5,-.5) circle[radius=3mm];
\draw (1,.5) node{3};
\draw (.5,-.5) node{1};
\draw (1.5,-.5) node{2};
\end{scope}
\begin{scope}[xshift=11.5cm]
\draw[thick,->] (.1,0)--(.9,0);
\draw (.5,.3)node{$\sigma_2$};
\end{scope}
\begin{scope}[xshift=13cm]
\draw[white,fill] (1.5,0) circle[radius=3mm];
\draw (1.5,0) circle[radius=3mm];
\draw[thick] (0.5,.5)--(0.5,-.5);
\draw[white,fill] (0.5,.5) circle[radius=3mm];
\draw (0.5,.5) circle[radius=3mm];
\draw[white,fill] (0.5,-.5) circle[radius=3mm];
\draw (0.5,-.5) circle[radius=3mm];
\draw (0.5,.5) node{2};
\draw (.5,-.5) node{1};
\draw (1.5,0) node{3};
\end{scope}
\end{scope} 
\end{tikzpicture}
\caption{The action of $\sigma_1$ and $\sigma_2$ is illustrated in the case $n=3$.}
\label{fig: A3 example 1}
\end{center}
\end{figure}
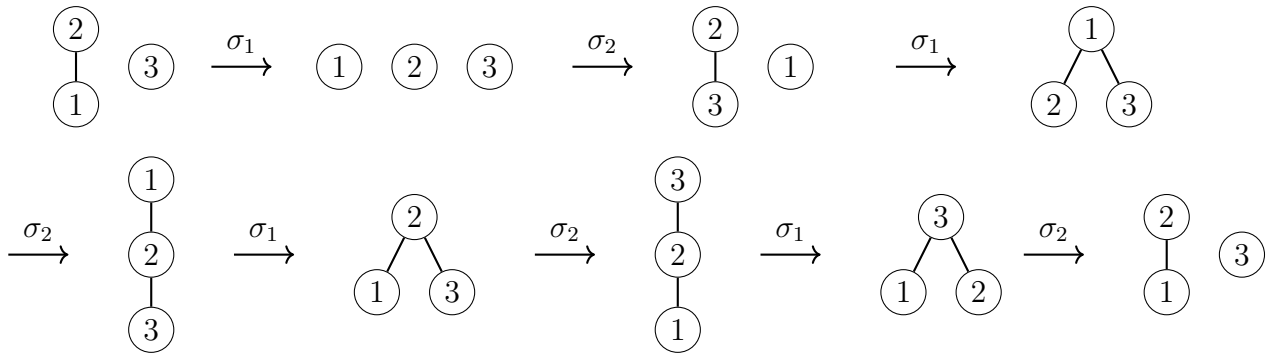

\begin{figure}[ht]
\begin{center}
\begin{tikzpicture}
\begin{scope}[xshift=0cm]
\draw[fill] (0,0) circle[radius=.7mm];
\draw[fill] (1.2,0) circle[radius=.7mm];
\draw[fill] (-1.2,0) circle[radius=.7mm];
\draw[fill] (-.6,.9) circle[radius=.7mm];
\draw[fill] (.6,.9) circle[radius=.7mm];
\draw[fill] (0,1.8) circle[radius=.7mm];
\draw[thick] (-1.2,0)--(0,1.8);
\draw[white,fill] (-1.2,0) circle[radius=3mm];
\draw (-1.2,0) circle[radius=3mm];
\draw[white,fill] (-.6,.9) circle[radius=3mm];
\draw (-.6,.9) circle[radius=3mm];
\draw[white,fill] (0,1.8) circle[radius=3mm];
\draw (0,1.8) circle[radius=3mm];
\draw (-1.2,0) node{1};
\draw (-.6,.9) node{2};
\draw (0,1.8) node{3};
\end{scope}
\begin{scope}[xshift=2cm, yshift=1cm]
\draw[thick,->] (.1,0)--(.9,0);
\draw (.5,.3)node{$\sigma_1$};
\end{scope}
\begin{scope}[xshift=5cm]
\draw[fill] (0,0) circle[radius=.7mm];
\draw[fill] (1.2,0) circle[radius=.7mm];
\draw[fill] (-1.2,0) circle[radius=.7mm];
\draw[fill] (-.6,.9) circle[radius=.7mm];
\draw[fill] (.6,.9) circle[radius=.7mm];
\draw[fill] (0,1.8) circle[radius=.7mm];
\draw[thick] (-1.2,0)--(0,1.8)--(0,0);
\draw[white,fill] (-1.2,0) circle[radius=3mm];
\draw (-1.2,0) circle[radius=3mm];
\draw[white,fill] (0,0) circle[radius=3mm];
\draw (0,0) circle[radius=3mm];
\draw[white,fill] (0,1.8) circle[radius=3mm];
\draw (0,1.8) circle[radius=3mm];
\draw (-1.2,0) node{2};
\draw (0,0) node{1};
\draw (0,1.8) node{3};
\end{scope}
\begin{scope}[xshift=7cm, yshift=1cm]
\draw[thick,->] (.1,0)--(.9,0);
\draw (.5,.3)node{$\sigma_2$};
\end{scope}

\begin{scope}[xshift=10cm]
\draw[fill] (0,0) circle[radius=.7mm];
\draw[fill] (1.2,0) circle[radius=.7mm];
\draw[fill] (-1.2,0) circle[radius=.7mm];
\draw[fill] (-.6,.9) circle[radius=.7mm];
\draw[fill] (.6,.9) circle[radius=.7mm];
\draw[fill] (0,1.8) circle[radius=.7mm];
\draw[thick] (0,0)--(.6,.9);
\draw[white,fill] (-1.2,0) circle[radius=3mm];
\draw (-1.2,0) circle[radius=3mm];
\draw[white,fill] (0,0) circle[radius=3mm];
\draw (0,0) circle[radius=3mm];
\draw[white,fill] (.6,.9) circle[radius=3mm];
\draw (.6,.9) circle[radius=3mm];
\draw (-1.2,0) node{3};
\draw (.6,.9) node{2};
\draw (0,0) node{1};
\end{scope}
\end{tikzpicture}
\caption{The last three forests in Figure \ref{fig: A3 example 1} drawn in the Auslander-Reiten sequence of $A_3$ following Remark \ref{rem: Hasse diagram in AR quiver}.}
\label{fig: A3 example AR}
\end{center}
\end{figure}
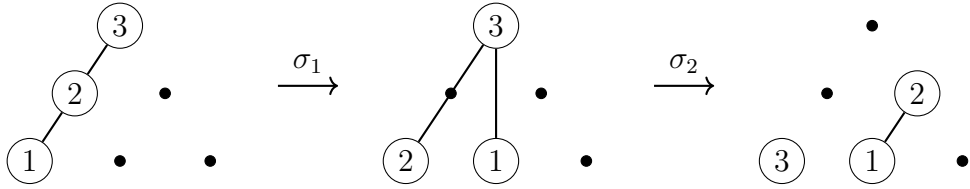

\section{Parking functions}\label{sec: parking functions}\label{parkingfunctions}

There is another action of the braid group on rooted labeled forests coming from the action on ``parking functions'' given in \cite{GG}. This action is difficult to describe, but we give an example to demonstrate that it is not the same as our action which is given in Figure \ref{fig: braid example}. First, we review the definitions and known results about parking functions.

A \emph{parking function} on $n$ is a function $f:[n]\to[n]$, where $[n]=\{1,2,\cdots,n\}$ so that $f^{-1}[k]$ has at least $k$ elements for all positive $k\le n$. This is equivalent to the condition that there are at most $p$ elements $i\in[n]$ with $f(i)\ge n-p+1$.

The name comes from an interpretation using $n$ cars parking in $n$ spaces numbered $1$ through $n$. Assume that Car $i$ has preferred parking space $a_i$. If that space is not available, Car $i$ will park in the next available space. If $f(i)=a_i$ is a parking function, each car $i$ will find a space $b_i\ge a_i$ and $g(i)=b_i$ will give a bijection $g:[n]\to [n]$.

As an example, suppose $n=4$ and consider the parking function $(1,1,2,2)$. Assume the cars come in reverse order. Then Car 4 will park in its preferred spot $b_4=2$, Car 3 will go to the next spot $b_3=3$, Car 2 will park in its favorite spot $b_2=1$. Finally, Car 1 will park in space $b_1=4$. The result is an exceptional sequence:
\[
 (M_{14}, M_{11}, M_{23}, M_{22}).
\]

\subsection{Parking functions and exceptional sequences} We review the well-known bijection between parking functions and exceptional sequences for linear $A_n$.

\begin{lem}\label{prop: nondecreasing parking function works}
Let $(a_1,a_2,\cdots,a_n)$ be a nondecreasing parking function on $n$. Let $b_1,\cdots,b_n$ be the permutation of $n$ given by the locations of the cars with preferred positions $a_i$ assuming they park in reverse order as outlined above. Then $M_{a_1,b_1},\cdots,M_{a_n,b_n}$ is an exceptional sequence for linear $A_n$.
\end{lem}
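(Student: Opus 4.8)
The plan is to verify the two defining vanishing conditions $\Hom(E_j,E_i)=0$ and $\Ext^1(E_j,E_i)=0$ for all $i<j$, where $E_k=M_{a_k,b_k}$, using the explicit combinatorics of the parking procedure. First I would record the elementary description of $\Hom$ and $\Ext^1$ between interval modules for the linear $A_n$ quiver $1\to 2\to\cdots\to n$: for intervals $[a,b]$ and $[c,d]$, one has $\Hom(M_{ab},M_{cd})\neq 0$ exactly when $c\le a\le d\le b$ (a nonzero map factors through the common "bottom" portion), and $\Ext^1(M_{ab},M_{cd})\neq 0$ exactly when the two intervals overlap in the pattern $a< c\le b+1\le d+1$, i.e. $[a,b]$ sits "to the left of and abutting or overlapping" $[c,d]$ with $a<c$ and $c-1\le b< d$. (I would state these precisely once and cite the standard fact; getting the inequalities exactly right is the one genuinely fiddly bookkeeping point.)

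Next I would extract the key structural property of the cars' final positions. Since $(a_1,\dots,a_n)$ is nondecreasing and the cars park in reverse order $n,n-1,\dots,1$, when Car $i$ parks, the spots already occupied are exactly those taken by cars $n,n-1,\dots,i+1$, and Car $i$ lands in the first free spot at or after $a_i$. I would prove by downward induction on $i$ the invariant that at the moment Car $i$ parks, the set of occupied spots among $\{a_i,a_i+1,\dots,n\}$ is a union of "blocks" each consisting of consecutively occupied spots, and that $b_i$ is determined as $a_i$ plus the number of already-parked cars $j>i$ with $b_j$ in the initial run $a_i,a_i+1,\dots$ This gives the crucial monotonicity facts: (i) $a_i\le b_i$ for all $i$; (ii) for $i<j$, since $a_i\le a_j$ and Car $i$ is the later car to park but has the smaller (or equal) preference, the interval $[a_i,b_i]$ relates to $[a_j,b_j]$ in one of a small number of ways — either $[a_j,b_j]\subseteq [a_i,b_i]$, or $b_i< a_j$, or the intervals are "nested on the left," $a_i\le a_j$ and $b_i< b_j$ — and I would show the overlapping-but-not-nested configuration that would create a nonzero $\Ext^1(E_j,E_i)$ cannot occur.

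Then I would run the verification case by case against the $\Hom$/$\Ext^1$ criteria from step one. For $\Hom(E_j,E_i)=M_{a_j,b_j}\to M_{a_i,b_i}$ with $i<j$: a nonzero map needs $a_i\le a_j\le b_i\le b_j$; I show the parking invariant forces either $a_j> b_i$ (disjoint, so $\Hom=0$) or, when $a_j\le b_i$, forces $b_j\le b_i$ so that $[a_j,b_j]\subseteq[a_i,b_i]$, and in that nested case $\Hom(M_{a_j,b_j},M_{a_i,b_i})=0$ because the smaller interval sits strictly inside without sharing the bottom vertex in the required way — here I must be careful that "nested" gives $\Hom=0$ in this direction, which is exactly the asymmetry of $\Hom$ for interval modules. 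Similarly $\Ext^1(E_j,E_i)=0$ follows because the only dangerous configuration ($[a_j,b_j]$ left of and abutting $[a_i,b_i]$) would require $a_j< a_i$, contradicting $a_i\le a_j$.

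The main obstacle I anticipate is the inductive proof of the parking invariant and, bundled with it, pinning down exactly which relative positions of $[a_i,b_i]$ and $[a_j,b_j]$ can arise for $i<j$; once that structural lemma is in hand the $\Hom$ and $\Ext^1$ vanishing are immediate from the interval-module formulas. A secondary subtlety is simply getting the boundary inequalities in the $\Ext^1$ criterion exactly right (the "$+1$" in $b+1\le d+1$), since an off-by-one there would break the argument; I would double-check this on the worked example $(1,1,2,2)\leadsto (M_{14},M_{11},M_{23},M_{22})$ given just above the lemma.
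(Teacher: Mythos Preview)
Your approach is essentially the same as the paper's: dispose of $\Ext^1(E_j,E_i)$ immediately from $a_i\le a_j$, and handle $\Hom(E_j,E_i)$ by showing that for $i<j$ one always has $b_i\notin[a_j,b_j]$, hence either $b_i<a_j$ (disjoint) or $b_i>b_j$ (proper subquotient), so no nonzero map. The paper does exactly this case split.

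Where you diverge is in the machinery. You propose a downward-induction ``parking invariant'' about blocks of occupied spots, but the invariant you actually state (occupied spots form a union of consecutive blocks; $b_i$ equals $a_i$ plus the size of the initial block) is essentially tautological and does not by itself deliver the fact you need. The paper's argument is a one-line observation with no induction: when Car $j$ parks at $b_j$, it is because $a_j,\dots,b_j-1$ were already occupied and Car $j$ now occupies $b_j$; therefore the entire interval $[a_j,b_j]$ is occupied from that moment on, so when Car $i$ (with $i<j$) parks later, it cannot land in $[a_j,b_j]$. That single sentence replaces your inductive invariant and immediately gives the dichotomy $b_i<a_j$ or $b_i>b_j$. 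Your list of three possible relative positions is also slightly muddled: the third configuration ($a_i\le a_j$ and $b_i<b_j$ with $a_j\le b_i$) is precisely the one that would make $\Hom(E_j,E_i)\neq 0$, so it should be ruled out, not listed as a possibility; the paper's observation does rule it out directly. Finally, note that the strictness $b_j<b_i$ in the nested case is automatic since the $b_k$ form a permutation, which is worth saying explicitly.
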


\begin{proof}
For $i<j$ we have $\Ext(M_{a_j,b_j},M_{a_i,b_i})=0$ since $a_i\le a_j$. It remains to show
\[
	\Hom(M_{a_j,b_j},M_{a_i,b_i})=0
\]
for $i<j$. When $a_i=a_j$, we have $b_i>b_j$ since Car $j$ has parked in space $b_j$ after finding spaces $a_j$ to $b_j-1$ occupied. Therefore, Car $i$ will find that spaces $a_i$ to $b_j$ are taken and must park in $b_i>b_j$. Then $M_{a_j,b_j}$ is a quotient of $M_{a_i,b_i}$ and $\Hom(M_{a_j,b_j},M_{a_i,b_i})=0$.

When $a_i<a_j$, there is no morphism $M_{a_j,b_j}\to M_{a_i,b_i}$ since $b_i$ is not in the closed interval $[a_j,b_j]$ since those spaces are occupied by the time Car $i$ parks. Thus either 
\begin{enumerate}
\item $b_i<a_j$, making the supports of the two modules disjoint or
\item $b_i>b_j$ in which case $M_{a_j,b_j}$ is a subquotient of $M_{a_i,b_i}$ and there is no morphism.
\end{enumerate}
Therefore $M_{a_1,b_1},\cdots,M_{a_n,b_n}$ is an exceptional sequence.
\end{proof}

\begin{lem}\label{lem: permutation on tops}
Let $(E_1,\cdots,E_n)$ be an exceptional sequence with sequence of tops $(a_1,\cdots,a_n)$. Then, for any $i$, there is another exceptional sequence with $a_i, a_{i+1}$
switched in the sequence of tops.
\end{lem}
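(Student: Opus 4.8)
The plan is to use the braid group action on exceptional sequences. Recall that for any exceptional sequence $(E_1,\dots,E_n)$ and any index $i$, the mutation operation $\sigma_i$ replaces the pair $(E_i,E_{i+1})$ by a new pair $(E_{i+1}, E_i')$ where $E_i'$ is obtained from $E_i$ and $E_{i+1}$ by a universal extension or cokernel, and $\sigma_i^{-1}$ replaces $(E_i,E_{i+1})$ by $(E_{i+1}'', E_i)$. Both operations yield exceptional sequences. The key observation is that after applying $\sigma_i$, the module in position $i$ is $E_{i+1}$, whose top is $a_{i+1}$, and the module in position $i+1$ is some $E_i'$; while after applying $\sigma_i^{-1}$, the module in position $i+1$ is $E_i$ with top $a_i$. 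So the question reduces to controlling the top of the module $E_i'$ (resp. $E_{i+1}''$) created by the mutation and checking that, in the appropriate case, it equals $a_i$ (resp. $a_{i+1}$).

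\medskip

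\textbf{First} I would dispose of the trivial case: if $\Hom(E_{i+1},E_i)=0=\Ext(E_{i+1},E_i)$ as well, then $(E_1,\dots,E_{i+1},E_i,\dots,E_n)$ is already an exceptional sequence and we are done, with the tops literally transposed. \textbf{Otherwise}, exactly one of $\Hom(E_{i+1},E_i)$ or $\Ext(E_{i+1},E_i)$ is nonzero (for type $A_n$ both cannot be simultaneously nonzero between indecomposables, and at least one is nonzero by assumption). I would then analyze the two mutation moves in terms of intervals. Writing $E_i=M_{a_i,b_i}$ and $E_{i+1}=M_{a_{i+1},b_{i+1}}$, the case $\Hom(E_{i+1},E_i)\ne0$ means $E_{i+1}$ is a quotient of $E_i$ along a common "bottom", so $a_i<a_{i+1}$ and $b_i=b_{i+1}$; here the left mutation $\sigma_i$ gives $(\dots, E_{i+1}, M_{a_i,a_{i+1}-1},\dots)$, whose sequence of tops is $(\dots,a_{i+1},a_i,\dots)$ — exactly the transposition wanted. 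The case $\Ext(E_{i+1},E_i)\ne0$ means there is a non-split extension forming $M_{a_i,b_{i+1}}$ with the intervals overlapping at their adjacent ends, i.e. $a_i\le b_{i+1}<b_i$ or the analogous configuration; here the \emph{right} mutation $\sigma_i^{-1}$ produces a pair whose tops are again $(a_{i+1},a_i)$. In each case I pick whichever of $\sigma_i^{\pm1}$ delivers the transposition; the content is the explicit interval combinatorics of the two kinds of mutation for linear $A_n$.

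\medskip

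\textbf{The main obstacle} is bookkeeping the mutation formulas carefully enough to see that the newly created module always has top in $\{a_i,a_{i+1}\}$, and in particular that one can always choose the direction of mutation so that the resulting top sequence is exactly $(\dots,a_{i+1},a_i,\dots)$ rather than some third value. Concretely one must verify that when $\Ext(E_{i+1},E_i)\ne 0$ the right-hand mutation's new module $E_{i+1}''$ (a submodule of $E_{i+1}$ or an extension, depending on conventions) has top $a_{i+1}$, and dually in the $\Hom$ case; these follow from the fact that in a short exact sequence of interval modules over linear $A_n$ the tops of the three terms are constrained to be the two distinct left-endpoints. Once the interval arithmetic is pinned down the statement is immediate. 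Alternatively — and perhaps more cleanly — one may invoke Theorem~\ref{thm A} together with the existence of the braid group action on complete exceptional sequences: since the braid generator $\sigma_i$ acts on the set of complete exceptional sequences and its effect on tops is a transposition of $a_i,a_{i+1}$ in the cases analyzed above, transitivity considerations already force the claim, but the direct interval computation is the most self-contained route and is the one I would write out.
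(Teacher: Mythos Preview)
Your approach is essentially the paper's: apply a braid mutation to the pair $(E_i,E_{i+1})$ and verify that the resulting top sequence is the transposition. The paper's execution is shorter, however. Rather than splitting into $\Hom$/$\Ext$ cases and computing intervals, it argues in one stroke: the new object $E_{i+1}'$ in $(\ldots,E_{i+1}',E_i,\ldots)$ satisfies $\undim E_{i+1}'\equiv\undim E_{i+1}\pmod{\undim E_i}$, and when $a_{i+1}$ lies strictly to the left of $a_i$ this congruence forces the leftmost nonzero coordinate of $\undim E_{i+1}'$ to sit at position $a_{i+1}$, so $\mathrm{top}(E_{i+1}')=a_{i+1}$ automatically. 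The other inequality is handled by the dual mutation. This dimension-vector trick replaces all of your interval bookkeeping.

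Two points in your write-up need correction. First, you are using the opposite convention from the paper: here $(E_1,\ldots,E_n)$ exceptional means $\Hom(E_j,E_i)=0=\Ext(E_j,E_i)$ for $i<j$, so your ``trivial case'' hypothesis $\Hom(E_{i+1},E_i)=0=\Ext(E_{i+1},E_i)$ is automatic and the subsequent dichotomy is vacuous as written; you should be analysing $\Hom(E_i,E_{i+1})$ and $\Ext(E_i,E_{i+1})$ instead. Once flipped, your interval descriptions also need rechecking --- for instance, in the $\Hom$ case with $b_i=b_{i+1}$ and $a_i<a_{i+1}$ the module $E_{i+1}$ is a \emph{submodule} of $E_i$, not a quotient, and the mutated pair you write down is exceptional only in your convention, not the paper's. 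Second, the proposed ``alternative'' via Theorem~\ref{thm A} is circular: this lemma is exactly what the paper uses to prove Theorem~\ref{thm A}.
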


\begin{proof}
If $a_i=a_{i+1}$ there is nothing to do. 
Otherwise, suppose $a_i<a_{i+1}$.
Then there is a unique object $E_{i+1}'$ so that $(E_1,\cdots,E_{i+1}',E_i,E_{i+2},\cdots,E_n)$ is an exceptional sequence. But, the dimension vector of $E_{i+1}'$ is congruent to $\undim E_{i+1}$ modulo $\undim E_i$. So, the top of $E_{i+1}'$ is equal to that of $E_{i+1}$, i.e,, the tops $a_i,a_{i+1}$ have switched.
The case $a_i>a_{i+1}$ is similar. 
\end{proof}

\begin{thm}\cite{GG}\label{thm A with proof}
The construction above gives a unique exceptional sequence having any given parking function as sequence of tops.
\end{thm}

\begin{proof}
By Lemma \ref{prop: nondecreasing parking function works}, the construction works for any nondecreasing parking function. By Lemma \ref{lem: permutation on tops}, we can permute the parking function. Since the two sets have the same cardinality $(n+1)^{n-1}$, this construction gives a bijection.
\end{proof}

\subsection{Parking functions and rooted labeled forests}\label{ss: comparison of braid group actions}

There is another well-known bijection between parking functions and rooted labeled forests using Pr\"ufer codes which we will review briefly here. We will use this bijection to compare the braid group action on parking functions given in \cite{GG} (using Theorem \ref{thm A with proof}) to the one we give in the previous section using rooted labeled forests.

We take as an example, the parking function $(1,1,1,1)$ on $A_4$ and the action of $\sigma_1$ then $\sigma_2$ then $\sigma_3$ using the top function (Theorem \ref{thm A with proof}):
\[
	(1,1,1,1)\xrightarrow{\sigma_1} (4,1,1,1)
	\xrightarrow{\sigma_2} (4,3,1,1)
	\xrightarrow{\sigma_3} (4,3,2,1).
\]
The \emph{Pr\"ufer code} corresponding to a parking function $(a_1,a_2,\cdots,a_n)$ is given by
\[
	(a_2-a_1,a_3-a_2,\cdots,a_n-a_{n-1})
\]
where these numbers are taken modulo $n+1$ which is $4+1=5$ in our case. Thus, the Pr\"ufer codes corresponding to our sequence of parking functions are:
\[
	(0,0,0)\xrightarrow{\sigma_1} (2,0,0)
	\xrightarrow{\sigma_2} (4,3,0)
	\xrightarrow{\sigma_3} (4,4,4).
\]
For example, the parking function $(4,3,1,1)$ has Pr\"ufer code $(-1,-2,0)=(4,3,0)$. 

The bijection between Pr\"ufer codes and rooted labeled forests, given below, gives the corresponding sequence of forests: %
\begin{equation}\label{eq: braid action on forests from GG action}
\begin{tikzpicture}[scale=.7]
\begin{scope}
\coordinate (A1) at (0,0);
\coordinate (A2) at (1,0);
\coordinate (A3) at (2,0);
\coordinate (A4) at (3,0);
\foreach \x/\y in {A1/1,A2/2,A3/3,A4/4}
\draw (\x) circle[radius=3mm] node{\y};
\end{scope}
\begin{scope}[xshift=4cm]
\draw (.5,0.15)node{$\xrightarrow{\sigma_1}$};
\coordinate (B1) at (2,0);
\coordinate (B2) at (3,0.5);
\coordinate (B3) at (4,0);
\coordinate (B4) at (3,-.5);
\draw[thick] (B2)--(B4);
\foreach \x in {B2,B4}
\draw[fill, white] (\x) circle[radius=3mm];
\foreach \x/\y in {B1/1,B2/2,B3/3,B4/4}
\draw (\x) circle[radius=3mm] node{\y};
\end{scope}
\begin{scope}[xshift=9cm]
\draw (.5,0.15)node{$\xrightarrow{\sigma_2}$};
\coordinate (C1) at (2,0);
\coordinate (C2) at (3,-1);
\coordinate (C3) at (3,1);
\coordinate (C4) at (3,0);
\draw[thick] (C2)--(C3);
\foreach \x in {C2,C3,C4}
\draw[fill, white] (\x) circle[radius=3mm];
\foreach \x/\y in {C1/1,C2/2,C3/3,C4/4}
\draw (\x) circle[radius=3mm] node{\y};
\end{scope}
\begin{scope}[xshift=13cm]
\draw (.5,0.15)node{$\xrightarrow{\sigma_3}$};
\coordinate (D1) at (2,-.5);
\coordinate (D2) at (3,-.5);
\coordinate (D3) at (4,-.5);
\coordinate (D4) at (3,.5);
\draw[thick] (D2)--(D4)--(D1) (D3)--(D4);
\foreach \x in {D1,D2,D3,D4}
\draw[fill, white] (\x) circle[radius=3mm];
\foreach \x/\y in {D1/1,D2/2,D3/3,D4/4}
\draw (\x) circle[radius=3mm] node{\y};
\end{scope}
\end{tikzpicture}
\end{equation}
The Pr\"ufer code corresponding to a rooted labeled forest is given by $(p_1,\cdots,p_{n-1})$ where $p_1$ is the label of the unique node adjacent to the leaf of the forest with the largest label ($p_1=0$ if this maximal leaf is a root). When that leaf is removed from the forest, the label of the unique node adjacent to the leaf with the largest label is $p_2$ and so on. For example, in the third forest, the largest root is 2. Above 2 is $p_1=4$. After removing 2, the largest root is 4 and above 4 is $p_2=3$. What remains are two roots. So, $p_3=0$. The resulting Pr\"ufer code is $(4,3,0)$.

We now examine the same sequence of complete exceptional sequences and corresponding sequence of rooted labeled forests using our new direct bijection between these two structures. The parking function $(1,1,1,1)$ corresponds to the sequence of injective modules $(M_{14},M_{13},M_{12},M_{11})$. The sequence of braid moves $\sigma_1,\sigma_2,\sigma_3$ from above with corresponding rooted labeled forests, Pr\"ufer codes and parking functions are given by:
\[
\begin{array}{cccc}
\text{Exceptional sequence}  &  \text{Forest} & \text{Pr\"ufer code} & \text{Parking function} \\
\hline
  (M_{14},M_{13},M_{12},M_{11}) & %
\begin{tikzpicture}[scale=.55]
\draw[thick] (4,0)--(1,3);
\draw[white,fill] (4,0) circle[radius=4mm];
\draw (4,0) circle[radius=4mm];
\draw[white,fill] (3,1) circle[radius=4mm];
\draw (3,1) circle[radius=4mm];
\draw[white,fill] (2,2) circle[radius=4mm];
\draw (2,2) circle[radius=4mm];
\draw[white,fill] (1,3) circle[radius=4mm];
\draw (1,3) circle[radius=4mm];
\draw (4,0) node{4};
\draw (3,1) node{3};
\draw (2,2) node{2};
\draw (1,3) node{1};
\end{tikzpicture}
& (3,2,1) & (1,4,1,2)\\
\xrightarrow{\sigma_1} (M_{44},M_{14},M_{12},M_{11})  &  
\begin{tikzpicture}[scale=.55]
%
\draw[thick] (1,1)--(2,2)--(4,0);
\draw[white,fill] (1,1) circle[radius=4mm];
\draw (1,1) circle[radius=4mm];
\draw[white,fill] (2,2) circle[radius=4mm];
\draw (2,2) circle[radius=4mm];
\draw[white,fill] (3,1) circle[radius=4mm];
\draw (3,1) circle[radius=4mm];
\draw[white,fill] (4,0) circle[radius=4mm];
\draw (4,0) circle[radius=4mm];
\draw (4,0) node{4};
\draw (3,1) node{3};
\draw (2,2) node{2};
\draw (1,1) node{1};
\end{tikzpicture}
 & (3,2,2)  & (1,4,1,3) \\
\xrightarrow{\sigma_2}(M_{44},M_{34},M_{14},M_{11})  &  
\begin{tikzpicture}[scale=.55]
%
\draw[thick] (1,0)--(3,2)--(4,1);
\draw[white,fill] (1,0) circle[radius=4mm];
\draw (1,0) circle[radius=4mm];
\draw[white,fill] (2,1) circle[radius=4mm];
\draw (2,1) circle[radius=4mm];
\draw[white,fill] (3,2) circle[radius=4mm];
\draw (3,2) circle[radius=4mm];
\draw[white,fill] (4,1) circle[radius=4mm];
\draw (4,1) circle[radius=4mm];
\draw (1,0) node{1};
\draw (2,1) node{2};
\draw (3,2) node{3};
\draw (4,1) node{4};
\end{tikzpicture}
 & (3,2,3)  & (3,1,3,1) \\
\xrightarrow{\sigma_3}(M_{44},M_{34},M_{24},M_{14})  & 
\begin{tikzpicture}[scale=.55]
\draw[thick] (1,0)--(4,3);
\draw[white,fill] (1,0) circle[radius=4mm];
\draw (1,0) circle[radius=4mm];
\draw[white,fill] (2,1) circle[radius=4mm];
\draw (2,1) circle[radius=4mm];
\draw[white,fill] (3,2) circle[radius=4mm];
\draw (3,2) circle[radius=4mm];
\draw[white,fill] (4,3) circle[radius=4mm];
\draw (4,3) circle[radius=4mm];
\draw (1,0) node{1};
\draw (2,1) node{2};
\draw (3,2) node{3};
\draw (4,3) node{4};
\end{tikzpicture}
  & (2,3,4)  & (2,4,2,1)
\end{array}
\]

This example uses Figure \ref{fig: braid example} to perform the braid group action on our exceptional sequence. The action is easy to perform on the rooted labeled forests. We saw that the braid group action on parking functions from \cite{GG} was equivalent to a different and more complicated action of the braid group on these forests as shown in \eqref{eq: braid action on forests from GG action}.

\section{Action of the Garside element}\label{sec 4}\label{garfieldelement}

We recall the definition of the fundamental braid $\delta$ and recall the well-known action of $\delta$ on complete exceptional sequences over any hereditary algebra. The corresponding action of $\delta$ on forests has an easy description. We also describe the action of the Garside element $\Delta$ on rooted labeled forests. $\Delta$ is an important element of the braid group for many reasons \cite{Garside}. For example, $\Delta^2=\delta^n$ generates the center of the braid group \cite{Chow} and the action of $\Delta$ on complete exceptional sequences over any hereditary algebra converts support tilting objects to $c$-vectors. See section \ref{ss: clusters and exc seq} below for a full explanation. (This is a comment made without proof in \cite{BRT}.) In section \ref{ss: extended braid group} we use $\Delta$ to describe an action of the extended braid group $\widetilde B_n\rtimes \ZZ_2$ on the set of rooted labeled forests. 

To do these detailed computations, we need more notation about exceptional sequences. First, the \emph{Euler pairing} for $\Lambda$-modules $X,Y$, also called the \emph{Euler characteristic} of $\Lambda$ \cite{ASS} is the biadditive integer pairing on $\mathbb Z^n$ characterized by the property that
\[
	\left<
		\undim X,\undim Y
	\right>_\Lambda:= \dim \Hom_\Lambda(X,Y)-\dim \Ext_\Lambda(X,Y).
\]
We will use the shorthand notation:
\[
	\chi_\Lambda(X,Y):=\left<
		\undim X,\undim Y
	\right>_\Lambda.
\]

\subsection{The fundamental braid $\delta$}\label{ss: fundamental braid}

This is the braid group element
\[
	\delta=\delta_n=\sigma_1\sigma_2\cdots \sigma_{n-1}.
\]
This pushed the last ($n$th) strand underneath the first $n-1$ strands to put it on the far left. (See Figure \ref{Fig: delta}.) The action of $\delta$ on an exceptional sequence is
\[
	\delta(E_1,E_2,\cdots,E_n)=( E_n',E_1,E_2,\cdots,E_{n-1}).
\]
\begin{rem}\label{rem: action of delta on exc seq}
Each object in a complete exceptional sequence is uniquely determined up to isomorphism by its position and the other objects. In this case we must have
\begin{enumerate}
\item $E_n'=\tau E_n$, the Auslander-Reiten translation of $E_n$, if $E_n$ is not projective since, in that case,
\[
	\chi_\Lambda(E_n,X)=-\chi_\Lambda(X,\tau E_n).
\]
\item When $E_n=P_i$ is projective with simple top $S_i$ then
\[
	\chi_\Lambda(P_i,X)=\dim X_i=\chi_\Lambda(X,I_i)
\]
where $I_i$ is the injective envelope of $S_i$. So, $E_n'=I_i$ in this case.
\end{enumerate}
(2) can also be seen combinatorially using Proposition \ref{root prop} which implies that $v_n$ is a root of $F$ if and only if $v_1'$ is a root of $F'=\delta F$. But this is equivalent to $E_n'$, the first object of $\delta E_\ast$ being injective. We use the shorthand notation
\[
	\tau_\Lambda^\ast X:=\begin{cases} \tau X & \text{if $X$ is not projective}\\
    I_i & \text{if $X=P_i$}
    \end{cases}
\]
\end{rem}

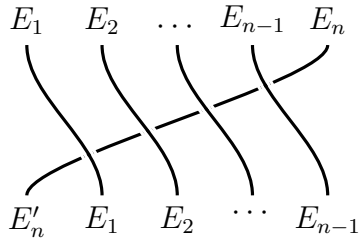
\begin{figure}[htbp]
\begin{center}
\begin{tikzpicture}
\draw[very thick] (4,2) .. controls (4,1.5) and (0,.5)..(0,0);
\begin{scope}[xshift=.5mm]
\draw[white,very thick] (1,0) .. controls (1,.7) and (0,1.3)..(0,2);
\draw[white,very thick] (2,0) .. controls (2,.7) and (1,1.3)..(1,2);
\draw[white,very thick] (3,0) .. controls (3,.7) and (2,1.3)..(2,2);
\draw[white,very thick] (4,0) .. controls (4,.7) and (3,1.3)..(3,2);
\end{scope}
\begin{scope}[xshift=1mm]
\draw[white,very thick] (1,0) .. controls (1,.7) and (0,1.3)..(0,2);
\draw[white,very thick] (2,0) .. controls (2,.7) and (1,1.3)..(1,2);
\draw[white,very thick] (3,0) .. controls (3,.7) and (2,1.3)..(2,2);
\draw[white,very thick] (4,0) .. controls (4,.7) and (3,1.3)..(3,2);
\end{scope}
\begin{scope}[xshift=-.5mm]
\draw[white,very thick] (1,0) .. controls (1,.7) and (0,1.3)..(0,2);
\draw[white,very thick] (2,0) .. controls (2,.7) and (1,1.3)..(1,2);
\draw[white,very thick] (3,0) .. controls (3,.7) and (2,1.3)..(2,2);
\draw[white,very thick] (4,0) .. controls (4,.7) and (3,1.3)..(3,2);
\end{scope}
\begin{scope}[xshift=-1mm]
\draw[white,very thick] (1,0) .. controls (1,.7) and (0,1.3)..(0,2);
\draw[white,very thick] (2,0) .. controls (2,.7) and (1,1.3)..(1,2);
\draw[white,very thick] (3,0) .. controls (3,.7) and (2,1.3)..(2,2);
\draw[white,very thick] (4,0) .. controls (4,.7) and (3,1.3)..(3,2);
\end{scope}
\begin{scope}
\draw[very thick] (1,0) .. controls (1,.7) and (0,1.3)..(0,2);
\draw[very thick] (2,0) .. controls (2,.7) and (1,1.3)..(1,2);
\draw[very thick] (3,0) .. controls (3,.7) and (2,1.3)..(2,2);
\draw[very thick] (4,0) .. controls (4,.7) and (3,1.3)..(3,2);
\draw (0,2) node[above]{$E_1$};
\draw (1,2) node[above]{$E_2$};
\draw (2,2) node[above]{$\cdots$};
\draw (3,2) node[above]{$E_{n-1}$};
\draw (4,2) node[above]{$E_{n}$};
\draw (0,0) node[below]{$E_n'$};
\draw (1,0) node[below]{$E_1$};
\draw (2,0) node[below]{$E_2$};
\draw (3,0) node[below]{$\cdots$};
\draw (4,0) node[below]{$E_{n-1}$};
\end{scope}
\end{tikzpicture}
\caption{The braid $\delta$ pulls the last strand to the first position underneath the other strands. On exceptional sequences, it moves the last object $E_n$ to the first position and changes it to $E_n'=\tau_\Lambda^\ast E_n$.}
\label{Fig: delta}
\end{center}
\end{figure}

Since the formula for $\delta$ depends on whether $E_n$ is projective, we need a combinatorial characterization of vertices corresponding to the projective (and injective) objects. 

\begin{lem}\label{lem: projective vertices}
The projective objects $P_1\subset P_2\subset\cdots\subset P_p$ of a complete exceptional sequence correspond to vertices $v_{j_1},v_{j_2},\cdots,v_{j_p}$ of the forest $F$ given as follows.
\begin{enumerate}
\item $v_{j_p}$ is the last root of $F$, i.e., the other roots have smaller labels.
\item $v_{j_i}$ is the last relatively projective child of $v_{j_{i+1}}$, i.e., $j_i$ is maximal among labels of children of $v_{j_{i+1}}$ so that $j_i<j_{i+1}$.
\end{enumerate}
\end{lem}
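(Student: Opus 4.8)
The plan is to translate the statement into the language of supports. For the straight orientation the indecomposable projective $\Lambda$-modules are exactly the interval modules $M_{a,n}$ (the projective cover of $S_a$), and these form a single chain $M_{n,n}\subset M_{n-1,n}\subset\cdots\subset M_{1,n}$; consequently the projective objects occurring in $E_\ast$ form a chain $P_1\subset P_2\subset\cdots\subset P_p$, an object $M_{a,b}$ of $E_\ast$ is projective precisely when $b=n$, and $P_i=M_{c_i,n}$ with $c_1>c_2>\cdots>c_p$. Besides the results already established (Lemma \ref{first lemma}, Lemma \ref{lem1: Ei submodule of Ej}, Proposition \ref{prop: properties of Ei}, Theorem \ref{thm: B2a}, Theorem \ref{thm: B2c}), the only extra ingredient needed is the trivial remark that in a rooted tree no node lies strictly between a node and its parent in the associated partial order. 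Writing $u_1,\dots,u_p$ for the nodes of $P_1,\dots,P_p$, the proof reduces to three claims: (A) $u_p$ is the last root of $F$; (B) for $i<p$, $u_i$ is the last relatively projective child of $u_{i+1}$; (C) $u_1$ has no relatively projective child. These three together say exactly that the recursion in the statement produces $u_p,u_{p-1},\dots,u_1$ and then halts, i.e.\ $v_{j_i}=u_i$.

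For (A): since $P_p=M_{c,n}$ is the largest projective in $E_\ast$, any $E_i$ whose support contains $[c,n]$ must, by noncrossing (Lemma \ref{first lemma}), have support $[c'',n]$ for some $c''\le c$, hence be a projective module containing $P_p$, hence equal $P_p$. So $u_p$ is a maximal node, i.e.\ a root. By Theorem \ref{thm: B2c} the root modules have consecutive supports covering $[1,n]$, so exactly one of them has support ending at $n$ — the one at the root of largest label — and since $P_p$ is a projective root module, $u_p$ is that last root.

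For (B): write $P_{i+1}=M_{a,n}$. Because $P_i\subset P_{i+1}$ is a proper submodule, Lemma \ref{lem1: Ei submodule of Ej} shows the path in $F$ from $u_i$ up to $u_{i+1}$ has strictly increasing labels, so the child of $u_{i+1}$ on that path has label smaller than that of $u_{i+1}$; thus $u_{i+1}$ has relatively projective children (Theorem \ref{thm: B2a}). Let $v_{c_q}$ be the last of them. By Proposition \ref{prop: properties of Ei}(4), $E_{c_q}$ is a submodule of $M_{a,n}$, hence of the form $M_{c',n}$, hence projective. It is moreover the largest projective of $E_\ast$ strictly inside $P_{i+1}$: a projective $E_t=M_{c'',n}$ with $E_{c_q}\subsetneq E_t\subsetneq P_{i+1}$ would satisfy $v_{c_q}<v_t<u_{i+1}$ in the partial order, contradicting that $u_{i+1}$ is the parent of $v_{c_q}$. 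Hence $E_{c_q}=P_i$, so $u_i=v_{c_q}$.

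For (C): if $u_1$ had a relatively projective child it would have a last one $v_{c_q}$, and by Proposition \ref{prop: properties of Ei}(4) $E_{c_q}$ would be a projective module $M_{c',n}$ in $E_\ast$ strictly contained in $P_1$, contradicting minimality of $P_1$. I expect the only genuinely delicate step to be the uniqueness part of (B) — that no projective object of $E_\ast$ can be inserted between $P_{i+1}$ and its last relatively projective child — but once noncrossing has forced all the modules in question to have support ending at $n$, this is just the observation about nodes and parents, so no real difficulty remains.
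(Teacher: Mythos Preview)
The paper states this lemma without proof, evidently regarding it as a direct consequence of the explicit construction in Section~\ref{sec 2}. Your argument is correct and fills in what the paper leaves implicit; the decomposition into (A), (B), (C) is the natural one, with (C) guaranteeing that the recursion in the statement halts exactly when the projectives run out. One small citation slip: in (A) you cite Theorem~\ref{thm: B2c} for the fact that the root modules have consecutive supports covering $[1,n]$, but that theorem is about relative projectivity and injectivity; the consecutive-support fact is the construction preceding Proposition~\ref{prop: F disconnected case} (see also Remark~\ref{rem 2.10: consecutive supports}). This does not affect the argument.
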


\begin{lem}\label{lem: injective vertices}
The injective objects $I_1\onto I_2\onto\cdots\onto I_q$ of a complete exceptional sequence correspond to vertices $v_{k_1},v_{k_2},\cdots,v_{k_q}$ of the forest $F$ given as follows.
\begin{enumerate}
\item $v_{k_1}$ is the first root of $F$, i.e., the other roots have larger labels.
\item $v_{k_{i+1}}$ is the first relatively injective child of $v_{k_{i}}$, i.e., $k_{i+1}$ is minimal among labels of children of $v_{k_{i}}$ with $k_{i+1}>k_{i}$.
\end{enumerate}
\end{lem}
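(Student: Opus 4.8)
\medskip

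The plan is to read off from the recursive construction of Section~\ref{sec 2} which objects of $E_\ast$ are genuinely injective, and then to strip them off one at a time from the top. The basic observation is that an indecomposable $E_i=M_{a_i,b_i}$ is an injective $\Lambda$-module exactly when $a_i=1$, i.e.\ when its support contains the vertex $1$, and then $E_i=M_{1,b_i}=I_{b_i}$. By Lemma~\ref{first lemma} any two objects of $E_\ast$ whose supports both contain $1$ have nested supports, so the injective objects of $E_\ast$ form a chain in the support-containment order; by Proposition~\ref{prop: properties of Ei}(2) this corresponds to a chain of nodes $v_{k_1}>v_{k_2}>\cdots>v_{k_q}$ of $F$, with $I_1$ the injective of largest support and $I_q$ that of smallest support. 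This is exactly the indexing $I_1\onto I_2\onto\cdots\onto I_q$ of the statement.

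First I would locate $I_1$. If $M_{1,b}$ lies in $E_\ast$ but is not $\le$-maximal in $F$, then some $E_j>M_{1,b}$ has support containing $[1,b]$, hence $E_j=M_{1,b'}$ with $b'>b$ is again injective; so the largest injective $I_1$ must be a $\le$-maximal object, i.e.\ a root of $F$. In the recursive construction the roots $v_{j_1},\dots,v_{j_k}$ with $j_1<\cdots<j_k$ receive modules of consecutive support read from the left, so $E_{j_1}=M_{1,w(v_{j_1})}$ contains $1$ while the supports of $E_{j_2},\dots,E_{j_k}$ do not. Hence $v_{j_1}$ is the unique root whose object is injective, so $I_1=E_{j_1}$ and $v_{k_1}=v_{j_1}$, the first root of $F$; moreover the support of any other injective object of $E_\ast$ lies inside that of a root, necessarily inside that of $v_{j_1}$, so every injective object lies in the subtree $T_{j_1}$.

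The engine of the proof is the following recursive step, applied starting from $s=k_1=j_1$. Suppose $E_s=M_{1,b}$ is injective and every injective object $\le E_s$ lies in $T_s$; list the children of $v_s$ as $v_{i_1},\dots,v_{i_m}$ with $i_1<\cdots<i_t<s<i_{t+1}<\cdots<i_m$. By Proposition~\ref{prop: properties of Ei}(4) the modules $E_{i_1},\dots,E_{i_t}$ have consecutive supports and $E_{i_t}$ is a proper submodule of $M_{1,b}$, so their supports lie to the right of the gap and none of them contains $1$; whereas $E_{i_{t+1}},\dots,E_{i_m}$ have consecutive supports with $E_{i_{t+1}}$ a quotient of $M_{1,b}$, so $E_{i_{t+1}}=M_{1,d}$ with $d<b$ is injective while $E_{i_{t+2}},\dots,E_{i_m}$ have supports inside $[d+1,b]$. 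Since every descendant of a node $v_j$ has support contained in $\supp E_j$ (Proposition~\ref{prop: properties of Ei}(2)), the only child of $v_s$ whose subtree can contain an injective object is $v_{i_{t+1}}$; as $E_{i_{t+1}}$ is $\le$-maximal in $T_{i_{t+1}}$ it is the largest injective strictly below $E_s$, and all injectives below $E_s$ lie in $T_{i_{t+1}}$. Thus the next injective below $E_s$ in the chain is $E_{i_{t+1}}$, which is precisely the first relatively injective child of $v_s$ --- ``first'' because $i_{t+1}$ is the least label $>s$ among children of $v_s$, and ``relatively injective'' by Theorem~\ref{thm: B2b} --- and the hypothesis of the step holds again with $s$ replaced by $i_{t+1}$. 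Iterating produces $v_{k_1}>v_{k_2}>\cdots$ with $v_{k_{i+1}}$ the first relatively injective child of $v_{k_i}$; the iteration stops exactly when $v_{k_q}$ has no child with a larger label, and then $E_{k_q}=M_{1,e}$ is the $\le$-minimal injective. Since at each step all injectives below the current one were shown to lie in the subtree of the next one, this chain enumerates every injective object of $E_\ast$ in decreasing order of support, which is the list $I_1\onto I_2\onto\cdots\onto I_q$.

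The step I expect to carry the weight is the identification, inside the recursion, of the first relatively injective child of an injective node as again injective and as the one whose subtree captures every deeper injective object; this rests entirely on Proposition~\ref{prop: properties of Ei}(4), especially part~(4b), which pins down $E_{i_{t+1}}$ as a quotient of $M_{1,b}$ and hence of the form $M_{1,d}$. Once that is in place the induction is routine. As a cross-check one may observe that Lemma~\ref{lem: injective vertices} is the image of Lemma~\ref{lem: projective vertices} under the $k$-duality $D=\Hom_k(-,k)$ composed with the relabeling $i\mapsto n+1-i$ of the vertices of $A_n$: this interchanges projectives with injectives and submodules with quotients, reverses exceptional sequences, and relabels the associated forest by $i\mapsto n+1-i$, so that ``last root'' becomes ``first root'' and ``last relatively projective child'' becomes ``first relatively injective child''. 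I would nonetheless present the direct argument above as the proof and keep the duality only as a remark.
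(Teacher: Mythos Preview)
Your proof is correct. The paper does not supply an explicit proof of this lemma (nor of its dual, Lemma~\ref{lem: projective vertices}); both are stated without argument as direct consequences of the recursive construction in Section~\ref{sec 2}, namely the assignment of consecutive supports to the roots (just before Proposition~\ref{prop: F disconnected case}) together with Proposition~\ref{prop: properties of Ei}(4). Your argument makes that implicit reasoning explicit and uses precisely those ingredients, so it is essentially the proof the paper has in mind. The closing duality remark is also in the spirit of the paper, which invokes the same operation $D$ in Section~\ref{sec: extended braid group action}.
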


We call the vertices $v_j$ satisfying Lemma \ref{lem: projective vertices} \emph{projective vertices} and those satisfying Lemma \ref{lem: injective vertices} the \emph{injective vertices} of the rooted labeled forest $F$. These lemmas imply the following.

\begin{lem}\label{lem: when v1, vn are roots}
In a complete exceptional sequence for linear $A_n$, $E_n$ is a projective module if and only if $v_n$ is a root of $F$. Similarly, $E_1$ is injective if and only if $v_1$ is a root.
\end{lem}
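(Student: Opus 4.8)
The plan is to read this statement off directly from Lemma~\ref{lem: projective vertices} and Lemma~\ref{lem: injective vertices}, which already pin down exactly which vertices of $F$ carry the projective (resp.\ injective) objects of $E_\ast$. I will give the argument for $E_n$ and projectives; the statement for $E_1$ and injectives is the mirror image, with Lemma~\ref{lem: injective vertices} used in place of Lemma~\ref{lem: projective vertices}.

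First I would recall from Lemma~\ref{lem: projective vertices} that the projective objects of $E_\ast$ correspond precisely to the projective vertices $v_{j_1},\dots,v_{j_p}$ of $F$; part (1) of that lemma says $v_{j_p}$ is the root of $F$ with the largest label, and part (2), since each $v_{j_i}$ is a child of $v_{j_{i+1}}$ with strictly smaller label, forces $j_1<j_2<\cdots<j_p$. (In particular $p\ge 1$: the forest $F$ on $n\ge 1$ vertices has at least one component, hence at least one root, hence at least one projective vertex.)

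The only remaining input is that $n$ is the largest element of $[n]$. Hence $E_n$ is a projective module if and only if $n\in\{j_1,\dots,j_p\}$, and by maximality of $n$ this holds if and only if $n=j_p$, i.e.\ if and only if $v_n$ is the last root of $F$. On the other hand, $v_n$ is a root of $F$ if and only if it is the root of largest label, i.e.\ again if and only if $v_n$ is the last root of $F$. Chaining the two equivalences gives $E_n$ projective $\iff v_n$ a root. The dual statement, $E_1$ injective $\iff v_1$ a root, comes out the same way from Lemma~\ref{lem: injective vertices}: the injective vertices $v_{k_1},\dots,v_{k_q}$ satisfy $k_1<\cdots<k_q$ with $v_{k_1}$ the root of smallest label, so $E_1$ injective $\iff 1=k_1 \iff v_1$ is the first root $\iff v_1$ is a root, using minimality of $1$. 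I do not expect any genuine obstacle here: all the work sits in the two structural lemmas, and the step that remains is just the observation that at the extreme labels $n$ and $1$ the notions ``root'' and ``last root'' (resp.\ ``first root'') coincide for the vertex $v_n$ (resp.\ $v_1$).
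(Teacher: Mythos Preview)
Your proof is correct and is exactly the approach the paper takes: the paper states only ``These lemmas imply the following'' immediately after Lemmas~\ref{lem: projective vertices} and~\ref{lem: injective vertices}, and your argument spells out precisely how. There is nothing to add.
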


\begin{thm}\label{thm: action of delta on F}
Let $F, \delta F$ be the rooted labeled forests of $E_\ast=(E_1,\cdots,E_n)$ and $\delta E_\ast=(\tau^\ast E_n,E_1,\cdots,E_{n-1})$ with labeled vertices $v_i$ and $v_i'$ respectively.
\begin{enumerate}
\item If $v_n$ is not a root of $F$, then $\delta F$ is equal to $F$ with the labels cyclically permuted so that $v_1'=v_n$ and $v_i'=v_{i-1}$ for $1<i\le n$.
\item If $v_n$ is a root of $F$, then $\delta F$ is obtained by cyclically permuting the labels of $F$, then exchanging $v_1'=v_n$ with the master root $v_0$.
\end{enumerate} 
In both cases $\delta F_+\cong F_+$ as unlabeled trees.
\end{thm}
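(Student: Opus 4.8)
The plan is to read off $\delta F$ directly from the formula $\delta E_\ast=(\tau^\ast E_n,E_1,\dots,E_{n-1})$ of Remark \ref{rem: action of delta on exc seq}, using the fact (Proposition \ref{prop: Hasse diagram is a rooted forest}, Theorem \ref{thm B1}) that the forest attached to an exceptional sequence is precisely the Hasse diagram of its objects ordered by inclusion of supports. Since $E_1,\dots,E_{n-1}$ are unchanged and only their labels shift up by one, the subposet they span is literally the same in $E_\ast$ and in $\delta E_\ast$; thus the entire problem is to compare how $\supp\tau^\ast E_n$ sits among $\supp E_1,\dots,\supp E_{n-1}$ with how $\supp E_n$ did. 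By Lemma \ref{lem: when v1, vn are roots} the two cases of the theorem are exactly ``$E_n$ not projective'' and ``$E_n=P_a$ projective'', and for the linear quiver $\tau M_{ab}=M_{a+1,b+1}$ when $b<n$, whereas $\tau^\ast P_a=\tau^\ast M_{an}=I_a=M_{1a}$.

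In Case 1 ($v_n$ not a root), $E_n=M_{ab}$ with $b<n$ and $\tau^\ast E_n=M_{a+1,b+1}$. I would prove the claim: for every $i<n$, the interval $\supp E_i$ is properly contained in, properly contains, or is disjoint from $[a+1,b+1]$ exactly when it is, respectively, properly contained in, properly contains, or is disjoint from $[a,b]$. Given this, the support-inclusion poset of $\delta E_\ast$ is isomorphic to that of $E_\ast$ via $\supp E_n\leftrightarrow[a+1,b+1]$; since $v_n$ is not a root, $[a,b]$---hence also $[a+1,b+1]$---is not maximal, so the roots and the master root all correspond, and we get $\delta F_+\cong F_+$ with $v_1'=v_n$, $v_i'=v_{i-1}$ and the master root fixed. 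To prove the claim I would combine the noncrossing property (Lemma \ref{first lemma}) with three facts forced by $E_n$ being the last object of an exceptional sequence: $E_n$ is not a proper submodule of any $E_i$ (otherwise $n<i$ by Lemma \ref{lem1: Ei submodule of Ej}); no $E_i$ with $i<n$ has support $[a,f]\subsetneq[a,b]$ (such an $E_i$ would be a proper quotient of $E_n$, forcing $i>n$ by Lemma \ref{lem2: Ei quotient of Ej}, i.e.\ $\Hom(E_n,E_i)\neq0$); and $E_i\neq S_{b+1}$ for $i<n$ (otherwise $\Ext(E_n,E_i)\neq0$). A short endpoint calculation then shows that the only way the relation to $[a,b]$ could differ from the relation to $[a+1,b+1]$ is for $\supp E_i$ to be one of these forbidden intervals or to cross one of $[a,b]$, $[a+1,b+1]$.

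In Case 2 ($v_n$ a root), $E_n=P_a=M_{an}$ and $\tau^\ast E_n=M_{1a}$, so $\supp\tau^\ast E_n=[1,a]$. Because $E_n$ is a root, the roots of $F$ have consecutive supports partitioning $[1,n]$, $v_n$ is the root of largest label with support $[a,n]$, and the remaining roots $v_{j_1}<\dots<v_{j_{m-1}}$ have supports partitioning $[1,a-1]$. Since $E_n$ is projective, Lemma \ref{lem2: Ei quotient of Ej} forbids any $E_i$ with $i<n$ from being a proper quotient of $E_n$, so every $E_i$ with $v_i<v_n$ has support inside $[a+1,n]$, disjoint from $[1,a]$; meanwhile every $E_i$ lying in one of $T_{j_1},\dots,T_{j_{m-1}}$ has support inside $[1,a-1]\subsetneq[1,a]$, and no $\supp E_i$ with $i<n$ contains $[1,a]$. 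Hence in $\delta F$ the vertex $v_1'$ (carrying $\tau^\ast E_n$) is a root whose children are exactly $v_{j_1},\dots,v_{j_{m-1}}$, while the children of $v_n$ in $F$ become further roots of $\delta F$. I would then check that $v_0\mapsto v_1'$, $v_n\mapsto v_0'$ (the master root of $\delta F_+$) and $v_i\mapsto v_{i+1}'$ for $1\le i\le n-1$ is an isomorphism of trees $F_+\xrightarrow{\sim}\delta F_+$, under which the labels undergo the cyclic shift $j\mapsto j+1\pmod{n+1}$; this is exactly ``cyclically permute the labels of $F$, then exchange $v_1'=v_n$ with the master root $v_0$''. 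The degenerate subcase $m=1$ ($E_n=M_{1n}$ projective-injective, $v_1'$ an isolated root) is subsumed.

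The main obstacle is the claim in Case 1. In Case 2 the support of $\tau^\ast E_n$ is disjoint from everything below $v_n$ and the tree merely reorganizes, but in Case 1 the support $[a,b]$ is only shifted by a single step, and one must rule out the handful of interval configurations in which such a shift would alter an inclusion or create a crossing---each excluded precisely because $E_n$ occupies the last slot of an exceptional sequence. An alternative I would mention but not pursue is to deduce both cases from the rotation of the Brady--Watt chord diagrams of Section \ref{ss: Goulden-Yong} under $\delta$.
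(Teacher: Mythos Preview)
Your approach is correct and is essentially a fleshed-out version of the paper's own argument. For Case~1 the paper simply observes that $\tau E_n$ has the same length as $E_n$ and then asserts ``so, the support containment relations cannot change''; your direct interval comparison is exactly what is needed to justify that sentence, and your use of Lemmas~\ref{lem1: Ei submodule of Ej} and~\ref{lem2: Ei quotient of Ej} to rule out the boundary configurations is the right mechanism. For Case~2 your analysis and the paper's are nearly identical: both note that every $E_i$ with $i<n$ has support either inside $\supp E_n=[a,n]$ or inside $\supp\tau^\ast E_n=[1,a]$, and then read off how the master root and $v_n$ swap roles.

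One small imprecision in your Case~1 outline: the third forbidden condition should be ``no $\supp E_i$ \emph{begins} at $b+1$'' rather than only ``$E_i\neq S_{b+1}$''. Both are excluded by $\Ext(E_n,E_i)\neq0$, but the stronger form is needed when $E_n$ is simple. Indeed if $E_n=S_b$ (so $a=b$) and $E_i=M_{b+1,d}$ with $d>b+1$, then $\supp E_i$ is disjoint from $[a,b]=\{b\}$ yet properly contains $[a+1,b+1]=\{b+1\}$, and it neither crosses either interval nor appears on your list. The fix is immediate (replace $S_{b+1}$ by ``any interval of the form $[b+1,d]$''), and with it your endpoint calculation goes through in all cases.
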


\begin{proof}
If $v_n$ is not a root of $F$ then, by Lemma \ref{lem: when v1, vn are roots}, $E_n$ is not projective. So, $E_n'=\tau E_n$ which has the same length as $E_n$. Since the other objects are the same as before, all objects have the same length as before. So, the support containment relations cannot change. So, $\delta F$ must be the same forest as $F$ with labels cyclically permuted.

If $v_n$ is a root of $F$, then $E_n$ is projective. Every other object in the sequence has support contained either in the support of $E_n$ or the support of the corresponding injective object $E_n'=\tau_\Lambda^\ast E_n$. Therefore, the new root $v_1'$ of $\delta F$ covers exactly those vertices of $F$ which were not descendants of $v_n$. This is equivalent to saying that $v_n$ has been exchanged with the master root and relabeled $v_1'$.
\end{proof}

\begin{figure}[ht]
\begin{center}
\begin{tikzpicture}
\begin{scope}[xshift=-2.5cm] 
\begin{scope}[xshift=2.5cm]
\draw[thick] (0.5,-.5)--(1,.5)--(1.5,-.5);
\draw[white,fill] (1,.5) circle[radius=3mm];
\draw (1,.5) circle[radius=3mm];
\draw[white,fill] (1.5,-.5) circle[radius=3mm];
\draw (1.5,-.5) circle[radius=3mm];
\draw[white,fill] (0.5,-.5) circle[radius=3mm];
\draw (0.5,-.5) circle[radius=3mm];
\draw (1,.5) node{2};
\draw (.5,-.5) node{1};
\draw (1.5,-.5) node{3};
\end{scope}
\begin{scope}[xshift=2cm]
\draw[thick,->] (3.1,0)--(3.9,0);
\draw (3.5,.3)node{$\delta$};
\end{scope}
\begin{scope}[xshift=6cm]
\draw[thick,->] (3.1,0)--(3.9,0);
\draw (3.5,.3)node{$\delta$};
\end{scope}
\begin{scope}[xshift=6.5cm]
\draw[thick] (0.5,-.5)--(1,.5)--(1.5,-.5);
\draw[white,fill] (1,.5) circle[radius=3mm];
\draw (1,.5) circle[radius=3mm];
\draw[white,fill] (1.5,-.5) circle[radius=3mm];
\draw (1.5,-.5) circle[radius=3mm];
\draw[white,fill] (0.5,-.5) circle[radius=3mm];
\draw (0.5,-.5) circle[radius=3mm];
\draw (1,.5) node{3};
\draw (.5,-.5) node{2};
\draw (1.5,-.5) node{1};
\end{scope}
\begin{scope}[xshift=11cm] 
\draw[white,fill] (0,0) circle[radius=3mm];
\draw (0,0) circle[radius=3mm];
\draw[white,fill] (1,0) circle[radius=3mm];
\draw (1,0) circle[radius=3mm];
\draw[white,fill] (2,0) circle[radius=3mm];
\draw (2,0) circle[radius=3mm];
\draw (0,0) node{1};
\draw (1,0) node{2};
\draw (2,0) node{3};
\end{scope}
\begin{scope}[xshift=15.5cm]
\draw[thick] (0.5,-.5)--(1,.5)--(1.5,-.5);
\draw[white,fill] (1,.5) circle[radius=3mm];
\draw (1,.5) circle[radius=3mm];
\draw[white,fill] (1.5,-.5) circle[radius=3mm];
\draw (1.5,-.5) circle[radius=3mm];
\draw[white,fill] (0.5,-.5) circle[radius=3mm];
\draw (0.5,-.5) circle[radius=3mm];
\draw (1,.5) node{1};
\draw (.5,-.5) node{2};
\draw (1.5,-.5) node{3};
\end{scope}
\begin{scope}[xshift=14cm]
\draw[thick,->] (.1,0)--(.9,0);
\draw (.5,.3)node{$\delta$};
\end{scope}
\end{scope} 
\end{tikzpicture}
\caption{Theorem \ref{thm: action of delta on F} in the case $n=3$ is illustrated (from Figure \ref{fig: A3 example 1}).}
\label{fig: A3 example delta}
\end{center}
\end{figure}
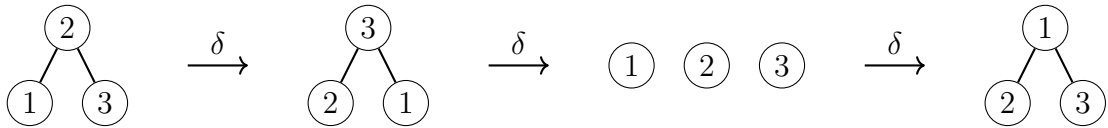

\begin{cor}\label{cor: action of central element Delta 2=delta to n}
Let $F,\delta^nF$ be the rooted labeled forests of $E_\ast=(E_1,\cdots,E_n)$ and $\delta^nE_\ast=(\tau^\ast E_1,\cdots,\tau^\ast E_n)$. Let $v_{j_1},\cdots,v_{j_p}$ be the projective vertices of $F$. Then $\delta^nF_+\cong F_+$ with projective vertices together with the master root $v_0$ cyclically permuted and all other vertices remaining the same. Thus $v_{j_p}'=v_0$, $v_0'=v_{j_1}$, $v_{j_i}'=v_{j_{i+1}}$ for $1\le i<p$ and $v_k'=v_k$ for all other values of $k$. In particular, $v_{j_1}',\cdots,v_{j_p}'$ are the injective vertices of $\delta^nF$ and the roots of $\delta^nF$ consist of $v_{j_1}'$ and the children of $v_{j_1}$ in $F$. (See Figure \ref{fig: Delta 2}.)
\end{cor}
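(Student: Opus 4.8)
The plan is to deduce Corollary~\ref{cor: action of central element Delta 2=delta to n} by applying Theorem~\ref{thm: action of delta on F} exactly $n$ times and recording the cumulative effect. Two pieces of bookkeeping control everything. First, each application of $\delta$ relabels the vertices by the cyclic shift $k\mapsto k+1$ (with $n\mapsto 1$, $0\mapsto 0$), and when case (1) of the theorem applies this shift is the \emph{only} change; since the shift has order $n$, after $n$ applications the labels have returned to their original values, so $\delta^nF$ differs from $F$ only through the ``exchange with the master root'' moves of case (2). Second, just before the $t$-th application ($1\le t\le n$), the vertex currently carrying the label $n$ — the only candidate for an exchange with $v_0$ — is the original vertex $v_{n+1-t}$, because its label has been shifted $t-1$ times; hence the ``active'' vertex runs through $v_n,v_{n-1},\dots,v_1$ as $t$ increases, and an exchange happens at step $t$ exactly when $v_{n+1-t}$ is a root of the forest present just before that step.

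The heart of the proof is to show that the exchanges occur at precisely the projective vertices, so that chronologically they happen at $v_{j_p},v_{j_{p-1}},\dots,v_{j_1}$ (a decreasing list of labels). I would argue by induction along the path $v_0,v_{j_p},v_{j_{p-1}},\dots,v_{j_1}$ of $F_+$ — this is a path by Lemma~\ref{lem: projective vertices} — carrying the invariant: \emph{immediately after the exchange at $v_{j_{i+1}}$, the roots of the current forest are $v_{j_{i+1}}$ together with the children of $v_{j_{i+1}}$ in $F$}. The invariant follows from the root description in Theorem~\ref{thm: action of delta on F}(2), applied to the subtree below $v_{j_{i+1}}$, which earlier exchanges (all taking place at ancestors of $v_{j_{i+1}}$) have left untouched. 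Because $v_{j_i}$ is by Lemma~\ref{lem: projective vertices} the child of $v_{j_{i+1}}$ of largest label strictly less than $j_{i+1}$, no label strictly between $j_i$ and $j_{i+1}$ belongs to a root during the intervening steps, so no spurious exchange occurs there; and at the active step of $v_{j_i}$ the active vertex is a root, triggering the next exchange. The base case holds because $v_{j_p}$ is the root of $F$ with the largest label, so no active vertex preceding its step is a root; and after the last exchange, at $v_{j_1}$, no further exchange is possible because $v_{j_1}$ has no relatively projective child (else the projective chain would be longer), so the remaining active vertices are never roots.

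It then remains to compose the $p$ exchanges. Each one swaps $v_0$ with the current root $v_{j_i}$, to which it is adjacent, and so transposes these two vertices along the path $v_0,v_{j_p},\dots,v_{j_1}$; performing the $p$ of them in turn slides $v_0$ down the path from the top to the former seat of $v_{j_1}$ while moving each $v_{j_i}$ one step toward the top. This is exactly the $(p{+}1)$-cycle $v_{j_p}'=v_0$, $v_{j_i}'=v_{j_{i+1}}$ for $1\le i<p$, $v_0'=v_{j_1}$, with all other vertices fixed. Reading off the resulting rooted tree from its master root (now occupying the old position of $v_{j_1}$) gives that the roots of $\delta^nF$ are $v_{j_1}'$ together with the former children of $v_{j_1}$, while $v_{j_1}',\dots,v_{j_p}'$, which now carry the modules $\tau^\ast E_{j_1},\dots,\tau^\ast E_{j_p}$ — injective since the $E_{j_i}$ are projective — are the injective vertices of $\delta^nF$ by Lemma~\ref{lem: injective vertices}. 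The main obstacle is the bookkeeping inside the inductive step: precisely tracking how the root set evolves after each exchange and ruling out an exchange between two consecutive projective vertices. Once Theorem~\ref{thm: action of delta on F} is in hand, everything else is routine.
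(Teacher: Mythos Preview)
Your approach is essentially the paper's—both iterate Theorem~\ref{thm: action of delta on F} $n$ times and read off the cumulative permutation—though you supply the bookkeeping that the paper compresses into one sentence. One small slip in the invariant: after the exchange at $v_{j_{i+1}}$ the master root \emph{occupies} the position of $v_{j_{i+1}}$, so the roots of the current forest are its neighbours in $F_+$, namely the \emph{previous} master root position $v_{j_{i+2}}$ (or $v_0$ when $i+1=p$) together with the children of $v_{j_{i+1}}$ in $F$, not $v_{j_{i+1}}$ itself. This does not damage the induction, since $j_{i+2}>j_{i+1}$ lies outside the remaining active range and the rest of your step goes through verbatim.
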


\begin{proof}
The first part follows from Theorem \ref{thm: action of delta on F}. The last part follows from Lemma \ref{lem: injective vertices} since $j_1$ must be smaller than the labels of the other roots of $\delta^nF$. These other roots in $\delta^nF$ were the children of $v_{j_1}$ in $F$ which must have larger index by Lemma \ref{lem: projective vertices}.2.
\end{proof}

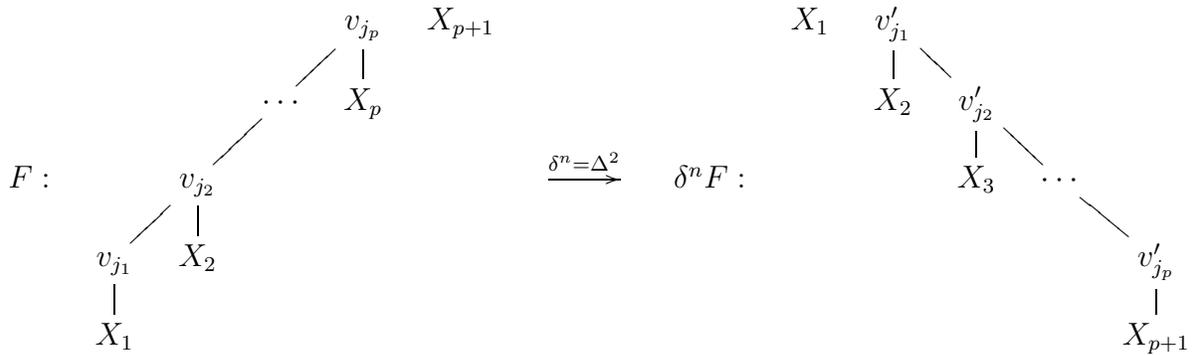
\begin{figure}[htbp]
\begin{center}
\[
\xymatrixrowsep{10pt}\xymatrixcolsep{10pt}
\xymatrix{
&&&&v_{j_p}\ar@{-}[d]\ar@{-}[ld] &X_{p+1}&&&& & X_1 & v_{j_1}'\ar@{-}[d]\ar@{-}[dr]\\
& &  & \cdots\ar@{-}[dl] & X_p &&& & &&  & X_2 & v_{j_2}'\ar@{-}[rd] & 	\\
F:& & v_{j_2}\ar@{-}[dl]\ar@{-}[d] &&  &&  \ar[rr]^{\delta^n=\Delta^2}&&&\delta^nF:  &  &  & X_3\ar@{-}[u] & \cdots\ar@{-}[dr] &	\\
&v_{j_1}\ar@{-}[d] & X_2 &  &&  &&&&&  &  &  &  &  v_{j_p}'\ar@{-}[d]	\\
&X_1 &  &  &  & && &  &&  && &  &X_{p+1}	\\
}
\]
\caption{Illustrating Corollary \ref{cor: action of central element Delta 2=delta to n}, the projective vertices of $F$ become the injective vertices of $\delta^nF$. The children of the smallest projective vertex $v_{j_1}$ in $F$ have larger indices and become the other roots of $\delta^nF$ with the injective vertex $v_{j_1}'$ being the first root of $\delta^nF=\Delta^2F$.}
\label{fig: Delta 2}
\end{center}
\end{figure}

\subsection{Garside element $\Delta$}\label{ss: Garside}

The \emph{Garside element} of the braid group $B_n$ is given by
\[
	\Delta=\delta_{n-1}\delta_{n-2}\cdots\delta_2\delta_1
\]
where, for each $k<n$, $\delta_k$ is given by
\[
	\delta_k=\sigma_1\sigma_2\cdots\sigma_k.
\]
In particular $\delta_{n-1}=\delta$ is the fundamental braid. The commutator relation:
\[
	\Delta \sigma_i \Delta^{-1}=\sigma_{n-i}
\]
implies that $\Delta^2$ lies in the center of the braid group $B_n$. In fact, $\Delta^2$ generates the center of $B_n$ by \cite{Chow}. It is also an easy observation that $\Delta^2=\delta^n$. So, the action of the central element $\Delta^2$ is given by Corollary \ref{cor: action of central element Delta 2=delta to n} and Figure \ref{fig: Delta 2}. 

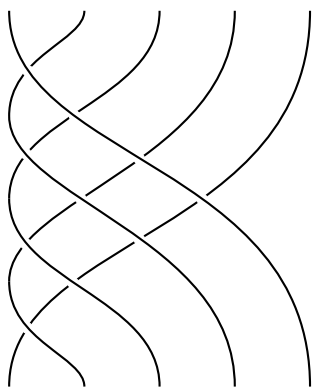
\begin{figure}[htbp]
\begin{center}
\begin{tikzpicture}
\coordinate (A1) at (0,5);
\coordinate (A2) at (0,3);
\coordinate (A3) at (4,3);
\coordinate (A4) at (4,0);
\coordinate (B1) at (0,3.6);
\coordinate (B2) at (0,2.5);
\coordinate (B3) at (3,2);
\coordinate (B4) at (3,0);
\coordinate (C1) at (0,2.5);
\coordinate (C2) at (0,1.4);
\coordinate (C3) at (2,1.2);
\coordinate (C4) at (2,0);
\coordinate (D1) at (0,1.4);
\coordinate (D2) at (0,0.6);
\coordinate (D3) at (1,0.4);
\coordinate (D4) at (1,0);
\begin{scope}[yscale=-1,yshift=-5cm]
\draw[thick] (0,5)..controls (0,3) and (4,3)..(4,0);
\draw[thick] (0,3.6)..controls (0,2.5) and (3,2)..(3,0);
\draw[thick] (0,2.5)..controls (0,1.4) and (2,1.2)..(2,0);
\draw[thick] (0,1.4)..controls (0,0.6) and (1,0.4)..(1,0);
\end{scope}
\begin{scope}
\draw[fill,white] (.24,.78) circle[radius=2pt];
\draw[thick] (D1)..controls (D2) and (D3)..(D4);
\draw[fill,white] (.85,1.38) circle[radius=2pt];
\draw[fill,white] (.22,1.9) circle[radius=2pt];
\draw[thick] (C1)..controls (C2) and (C3)..(C4);
\draw[fill,white] (.22,3.1) circle[radius=2pt];
\draw[fill,white] (.95,2.5) circle[radius=2pt];
\draw[fill,white] (1.73,1.96) circle[radius=2pt];
\draw[thick] (B1)..controls (B2) and (B3)..(B4);
\draw[fill,white] (.25,4.2) circle[radius=2pt];
\draw[fill,white] (.87,3.6) circle[radius=2pt];
\draw[fill,white] (1.73,3.03) circle[radius=2pt];
\draw[fill,white] (2.56,2.5) circle[radius=2pt];
\draw[thick] (A1)..controls (A2) and (A3)..(A4);
\end{scope}
\end{tikzpicture}
\caption{The Garside braid $\Delta$ for $n=5$. Each strand goes over the ones on its right and under the ones on its left.}
\label{fig: Garside drawing}
\end{center}
\end{figure}
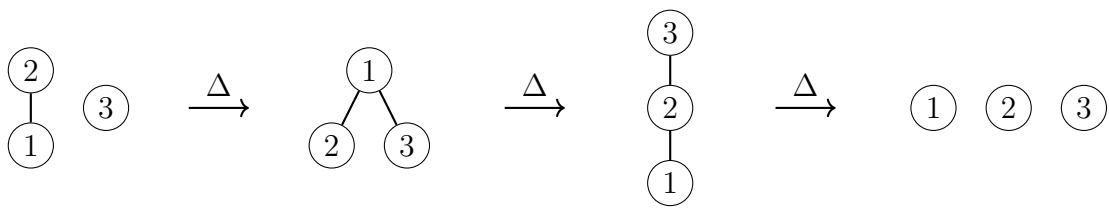
\begin{figure}[ht]
\begin{center}
\begin{tikzpicture}
\begin{scope}[yshift=2.5cm] 
\begin{scope}[xshift=-3.5cm]
\draw[thick,->] (2.1,0)--(2.9,0);
\draw (2.5,.3)node{$\Delta$};
\end{scope}
\begin{scope}[xshift=-4cm]
\draw[white,fill] (1.5,0) circle[radius=3mm];
\draw (1.5,0) circle[radius=3mm];
\draw[thick] (0.5,.5)--(0.5,-.5);
\draw[white,fill] (0.5,.5) circle[radius=3mm];
\draw (0.5,.5) circle[radius=3mm];
\draw[white,fill] (0.5,-.5) circle[radius=3mm];
\draw (0.5,-.5) circle[radius=3mm];
\draw (0.5,.5) node{2};
\draw (.5,-.5) node{1};
\draw (1.5,0) node{3};
\end{scope}
\begin{scope}[xshift=-3mm]
\draw[thick,->] (3.1,0)--(3.9,0);
\draw (3.5,.3)node{$\Delta$};
\end{scope}
\begin{scope}[xshift=4.3cm]
\draw[thick,->] (2.1,0)--(2.9,0);
\draw (2.5,.3)node{$\Delta$};
\end{scope}
\begin{scope}
\draw[thick] (0.5,-.5)--(1,.5)--(1.5,-.5);
\draw[white,fill] (1,.5) circle[radius=3mm];
\draw (1,.5) circle[radius=3mm];
\draw[white,fill] (1.5,-.5) circle[radius=3mm];
\draw (1.5,-.5) circle[radius=3mm];
\draw[white,fill] (0.5,-.5) circle[radius=3mm];
\draw (0.5,-.5) circle[radius=3mm];
\draw (1,.5) node{1};
\draw (.5,-.5) node{2};
\draw (1.5,-.5) node{3};
\end{scope}
\begin{scope}[xshift=5cm]
\draw[thick] (0,1)--(0,-1);
\draw[white,fill] (0,0) circle[radius=3mm];
\draw (0,0) circle[radius=3mm];
\draw[white,fill] (0,1) circle[radius=3mm];
\draw (0,1) circle[radius=3mm];
\draw[white,fill] (0,-1) circle[radius=3mm];
\draw (0,-1) circle[radius=3mm];
\draw (0,1) node{3};
\draw (0,0) node{2};
\draw (0,-1) node{1};
\end{scope}
\begin{scope}[xshift=8.5cm] 
\draw[white,fill] (0,0) circle[radius=3mm];
\draw (0,0) circle[radius=3mm];
\draw[white,fill] (1,0) circle[radius=3mm];
\draw (1,0) circle[radius=3mm];
\draw[white,fill] (2,0) circle[radius=3mm];
\draw (2,0) circle[radius=3mm];
\draw (0,0) node{1};
\draw (1,0) node{2};
\draw (2,0) node{3};
\end{scope}
\end{scope} 
\end{tikzpicture}
\caption{The action of $\Delta=\sigma_1\sigma_2\sigma_1=\sigma_2\sigma_1\sigma_2$ is shown in the case $n=3$ using Figure \ref{fig: A3 example 1}. Proposition \ref{prop: properties of Delta F} explains the combinatorics of $\Delta$.
}
\label{fig: A3 example Delta}
\end{center}
\end{figure}

The Garside braid is shown in Figure \ref{fig: Garside drawing}. Examples of the action of $\Delta$ on exceptional sequences for $A_3$ were computed in Figure \ref{fig: A3 example 1} and are summarized in Figure \ref{fig: A3 example Delta} below. The use of the Garside element in cluster theory is explained in the next subsection. Here we discuss the action of $\Delta$ on forests.

We use the notation
\[
	\Delta(E_1,\cdots,E_n)=(E_1',\cdots,E_n').
\]
The first thing to notice is that $E_n'=E_1$ since the first strand goes over the other strands in each step of the braid $\Delta$. Similarly, every strand goes over the ones on its right and under the ones on its left.

Let $\mathcal C_k=Z^\perp$, the right perpendicular category of $Z=E_{k+1}\oplus\cdots \oplus E_n$. This is the category of all $\Lambda$-modules $X$ so that $
	\chi_\Lambda(E_j,X)=0
$ for all $j>k$.

\begin{lem} The sequence $(E_1,\cdots,E_k)$ is a complete exceptional sequence for the category $\cC_k$. 
\end{lem}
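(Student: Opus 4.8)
The plan is to verify the two defining properties of a complete exceptional sequence for $\cC_k$ separately: that $(E_1,\dots,E_k)$ is an exceptional sequence all of whose members lie in $\cC_k$, and that its length equals the number of simple objects of $\cC_k$. First I would check $E_i\in\cC_k$ for every $i\le k$: since $(E_1,\dots,E_n)$ is an exceptional sequence, for $i\le k<j$ we have $\Hom_\Lambda(E_j,E_i)=0=\Ext_\Lambda(E_j,E_i)$, and letting $j$ range over $k+1,\dots,n$ this says exactly that $E_i$ lies in the right perpendicular category $Z^\perp=\cC_k$. Next, by Proposition~\ref{prop: perpendicular category is abelian} the category $\cC_k$ is abelian and closed under extensions, kernels and cokernels inside $\mathrm{mod}\text-\Lambda$; since $\mathrm{mod}\text-\Lambda$ is hereditary, $\cC_k$ is hereditary and, for objects of $\cC_k$, the groups $\Hom$ and $\Ext^1$ computed in $\cC_k$ agree with those computed in $\mathrm{mod}\text-\Lambda$. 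Hence each $E_i$ ($i\le k$) is indecomposable in $\cC_k$ with $\End_{\cC_k}(E_i)$ a division ring and $\Ext^1_{\cC_k}(E_i,E_i)=0$, so it is an exceptional object of $\cC_k$, and the orthogonality $\Hom(E_j,E_i)=0=\Ext(E_j,E_i)$ for $i<j\le k$ transfers verbatim, making $(E_1,\dots,E_k)$ an exceptional sequence in $\cC_k$.

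It then remains to see the sequence is \emph{complete}, i.e.\ that $\cC_k$ has exactly $k$ isomorphism classes of simple objects, equivalently $\operatorname{rank}K_0(\cC_k)=k$. The upper bound is elementary: $\undim E_1,\dots,\undim E_n$ form a $\ZZ$-basis of $K_0(\mathrm{mod}\text-\Lambda)\cong\ZZ^n$ (as used in the proof of Proposition~\ref{prop: length=weight}), and every object of $\cC_k$ has dimension vector in the common kernel of the $n-k$ functionals $\chi_\Lambda(\undim E_j,-)$, $j>k$, which are linearly independent since the Euler form of $A_n$ is non-degenerate and the $\undim E_j$ are part of a basis; their common kernel is therefore $k$-dimensional, giving $\operatorname{rank}K_0(\cC_k)\le k$. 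For the matching lower bound I would invoke the perpendicular-category theorem: the right perpendicular category of an exceptional sequence of length $m$ in $\mathrm{mod}$ of a hereditary algebra with $n$ simples is equivalent to $\mathrm{mod}$ of a hereditary algebra with $n-m$ simples (Geigle--Lenzing; Schofield; see also \cite{Crawley-Boevey}, \cite{RingelExcSeq}), applied to the length-$(n-k)$ exceptional sequence $(E_{k+1},\dots,E_n)$ whose direct sum is $Z$. Thus $\cC_k$ has $k$ simples and the length-$k$ exceptional sequence $(E_1,\dots,E_k)$ in $\cC_k$ is complete.

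The bookkeeping in the first paragraph — $E_i\in\cC_k$, and ``$\Hom$/$\Ext$ in $\cC_k$ $=$ $\Hom$/$\Ext$ in $\mathrm{mod}\text-\Lambda$'' — is routine once Proposition~\ref{prop: perpendicular category is abelian} is in hand, and the upper bound $\operatorname{rank}K_0(\cC_k)\le k$ is just a computation with the (non-degenerate) Euler form. The one genuine external input, and the step I expect to be the crux, is the matching lower bound: that the $k$ classes $\undim E_1,\dots,\undim E_k$ in fact span $K_0(\cC_k)$, i.e.\ that $\cC_k$ is again the module category of a hereditary algebra whose number of simples has dropped by exactly the length of the exceptional sequence $(E_{k+1},\dots,E_n)$. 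Everything else is unwinding definitions already set up in the paper.
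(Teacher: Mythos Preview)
Your argument is correct, but the paper takes a much lighter route. Rather than computing $\operatorname{rank}K_0(\cC_k)$ and invoking the Geigle--Lenzing/Schofield perpendicular-category theorem, the paper simply argues by maximality: if $(E_1,\dots,E_k)$ could be extended inside $\cC_k$ by some object $X$, then since $X\in\cC_k=Z^\perp$ one could insert $X$ among the first $k$ terms of $(E_1,\dots,E_n)$ and obtain an exceptional sequence of length $n+1$ in $\mathrm{mod}\text-\Lambda$, contradicting the maximality of a complete exceptional sequence (the $\undim E_i$ form a basis of $\RR^n$, as used in Proposition~\ref{prop: length=weight}). Your route buys the stronger structural fact that $\cC_k$ is equivalent to the module category of a hereditary algebra with exactly $k$ simples, which the paper does not need at this point; but note that once you invoke that theorem it already gives the number of simples directly, so your separate upper-bound argument via the Euler form is redundant---and in fact the inequality $\operatorname{rank}K_0(\cC_k)\le k$ as you phrase it quietly assumes that $K_0(\cC_k)\to K_0(\mathrm{mod}\text-\Lambda)$ is injective, which is precisely what the structural theorem supplies.
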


\begin{proof}
The objects $E_1,\cdots,E_k$ lie in $\cC_k$ by definition of an exceptional sequence. To show it is complete, suppose we could add an object of $\cC_k$ to this sequence. Then, that object could also be added to the complete exceptional sequence $(E_1,\cdots,E_n)$ to the left of $E_{k+1}$ contradicting its maximality. 
\end{proof}

We recall our main theorem (Theorem \ref{theorem A2}) which says that $v_k$ is descending in $F$ (either a root or the child of some $v_j$ for $j>k$) if and only if $E_k$ is relatively projective which means $E_k$ is a projective object of the category $\cC_k$.

\begin{lem}\label{lem: Ek'}
The object $E_{n-k+1}'$ in $\Delta E_\ast=(E_1',\cdots,E_n')$ is the unique indecomposable object of $\cC_k$ having the property that $(E_{n-k+1}',E_1,\cdots,E_{k-1})$ is a complete exceptional sequence for $\cC_k$. Consequently,
\begin{enumerate}
\item If $E_k$ is not relatively projective, then $E_{n-k+1}'=\tau_kE_k$ where $\tau_k$ is Auslander-Reiten translation in $\cC_k$. In that case, $\chi_\Lambda(E_k,E_{n-k+1}')=-1$.
\item If $E_k$ is relatively projective, then $E_{n-k+1}'$ is the injective envelope in $\cC_k$ of the relatively simple $\cC_k$-top of $E_k$. In this case, $\chi_\Lambda(E_k,E_{n-k+1}')=+1$.
\end{enumerate}
In particular, $E_1'=\tau^\ast E_n$ (just as in $\delta E_\ast$).
\end{lem}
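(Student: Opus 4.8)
The plan is to compute $\Delta E_\ast$ by applying the factors of $\Delta=\delta_{n-1}\delta_{n-2}\cdots\delta_1$ one block at a time, starting with $\delta_1$ (this is the order forced by the convention $\delta(E_\ast)=(E_n',E_1,\dots,E_{n-1})$ recorded earlier), and to track where each object lands. I would use two facts. First, $\delta_j=\sigma_1\sigma_2\cdots\sigma_j$ moves only the first $j+1$ objects of a complete exceptional sequence $(F_1,\dots,F_n)$: the leading segment $(F_1,\dots,F_{j+1})$ is a complete exceptional sequence for the perpendicular category $(F_{j+2}\oplus\cdots\oplus F_n)^\perp$ (by the argument of the Lemma preceding this one), and $\delta_j$ replaces it by the image of the fundamental braid of that category, so by Remark \ref{rem: action of delta on exc seq} applied there the new sequence is $(\tau^\ast_{(F_{j+2}\oplus\cdots\oplus F_n)^\perp}(F_{j+1}),\,F_1,\dots,F_j,\,F_{j+2},\dots,F_n)$. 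Second, since the blocks $\delta_1,\dots,\delta_{j-1}$ touch only positions $\le j$, at the moment $\delta_j$ is applied positions $j+1,\dots,n$ still carry $E_{j+1},\dots,E_n$, so the relevant perpendicular category is exactly $\cC_{j+1}$.

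Setting $N_j:=\tau^\ast_{\cC_j}(E_j)$, I would then show by induction on $m$ (for $1\le m\le n-1$) that after applying $\delta_1,\dots,\delta_m$ the sequence is
\[
	(N_{m+1},N_m,\dots,N_2,\ E_1,\ E_{m+2},E_{m+3},\dots,E_n).
\]
The base case $m=1$ is the fundamental-braid move for $\cC_2$ on $(E_1,E_2)$. For the inductive step, $\delta_{m+1}$ promotes $E_{m+2}$ from position $m+2$; since the first $m+2$ entries form a complete exceptional sequence for $\cC_{m+2}$, the first fact turns $E_{m+2}$ into $N_{m+2}$ at the front and shifts the rest one place right, while the $N_j$ already present and $E_1$ are only shifted, never promoted again and hence never re-modified. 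Taking $m=n-1$ yields $\Delta E_\ast=(N_n,N_{n-1},\dots,N_2,E_1)$, that is, $E_{n-k+1}'=\tau^\ast_{\cC_k}(E_k)$ for $2\le k\le n$ and $E_n'=E_1$ (the last equality also recovers $E_n'=E_1$ noted before the lemma).

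The rest is repackaging. By the Lemma preceding this one, $(E_1,\dots,E_k)$ is a complete exceptional sequence for $\cC_k$, so the fundamental braid of $\cC_k$ carries it to $(\tau^\ast_{\cC_k}(E_k),E_1,\dots,E_{k-1})$, again a complete exceptional sequence for $\cC_k$; since the first object of a complete exceptional sequence is determined by the remaining ones (Remark \ref{rem: action of delta on exc seq}), $E_{n-k+1}'$ is the unique indecomposable of $\cC_k$ with the stated property. Cases (1) and (2) are then just the meaning of $\tau^\ast_{\cC_k}$ according as $E_k$ is or is not projective in $\cC_k$, i.e.\ relatively projective. For the value of $\chi_\Lambda(E_k,E_{n-k+1}')$ I would use that $\cC_k$ is abelian and extension-closed in the module category (Proposition \ref{prop: perpendicular category is abelian}), so $\Hom$ and $\Ext$ computed in $\cC_k$ agree with those in $mod\text-\Lambda$ and $\chi_\Lambda$ restricts to the Euler form of $\cC_k$; the analogue of Remark \ref{rem: action of delta on exc seq} inside $\cC_k$ then gives $-1$ in case (1) (as $\tau_kE_k$ is a brick) and $+1$ in case (2) (a relatively projective maps one-dimensionally onto the $\cC_k$-injective envelope of its relatively simple top). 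The case $k=n$ is $E_1'=\tau^\ast_{\cC_n}(E_n)=\tau^\ast E_n$.

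The step I expect to be the main obstacle is the first of the two facts above: making precise that $\delta_j$ restricted to the leading $j+1$ strands realizes the fundamental braid of $\cC_{j+1}$, and that Remark \ref{rem: action of delta on exc seq} (and the Lemma preceding this one) remain valid inside $\cC_{j+1}$ even though for linear $A_n$ this perpendicular category is in general a finite product of module categories of smaller linear quivers rather than a single one. Both points are routine — $\sigma_i$ acts on a complete exceptional sequence only through the pair $(E_i,E_{i+1})$, and over a product category a complete exceptional sequence is an interleaving of complete exceptional sequences of the factors on which the fundamental braid operates one factor at a time, so the projective/Auslander--Reiten dichotomy of Remark \ref{rem: action of delta on exc seq} is inherited componentwise — but this is where the bookkeeping must be carried out carefully.
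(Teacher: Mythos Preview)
Your approach is essentially the same as the paper's: both apply the partial products $\delta_{k-1}\cdots\delta_1$ to $E_\ast$, observe that positions $k+1,\dots,n$ are untouched so the leading segment lives in $\cC_k$, and identify $E_{n-k+1}'$ as the object produced by the fundamental braid of $\cC_k$ acting on $(E_1,\dots,E_k)$, then read off the dichotomy from the $\tau^\ast$ description of Remark~\ref{rem: action of delta on exc seq} inside $\cC_k$. Your version is in fact more explicit than the paper's (which simply writes the intermediate sequence and says ``the rest follows by analogy with Theorem~\ref{thm: action of delta on F}''), and the point you flag as the main obstacle---that $\cC_k$ is a product of smaller linear $A$ module categories so Remark~\ref{rem: action of delta on exc seq} and the Euler-form computation carry over componentwise---is exactly the detail the paper leaves implicit.
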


\begin{proof}
The braid move $\delta_{k-2}\cdots\delta_1$ on $E_\ast$ produces the exceptional sequence
\[
	(E_{n-k+2}',\cdots,E_n',E_k,E_{k+1},\cdots,E_n)
\]
which implies that $(E_{n-k+2}',\cdots,E_n')$ is a complete exceptional sequence for $\cC_{k-1}$. Applying the braid move $\delta_{k-1}$ produces the exceptional sequence
\[
	(E_{n-k+1}',E_{n-k+2}',\cdots,E_n',E_{k+1},\cdots,E_n)
\]
which implies that $E_{n-k+1}'$ is the unique object of $\cC_k$ in the right perpendicular category of $\cC_{k-1}$. Equivalently, $(E_{n-k+1}',E_1,\cdots,E_{k-1})$ is a complete exceptional sequence for $\cC_k$ as claimed. The rest follows by analogy with Theorem \ref{thm: action of delta on F}.
\end{proof}

This lemma immediately implies the following.

\begin{lem}\label{lem: rel proj and rel inj in Delta E}
Let $\Delta E_\ast=(E_1',\cdots,E_n')$. Then $E_k$ is relatively projective in $E_\ast$ if and only if $E_{n-k+1}'$ is relatively injective in $\Delta E_\ast$.
\end{lem}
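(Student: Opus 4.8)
The plan is to reduce the lemma to the single category identity $\cC_k={}^\perp(E_1'\oplus\cdots\oplus E_{n-k}')$ and then read both sides off Lemma \ref{lem: Ek'}. First I would unwind the definitions. By definition, ``$E_k$ relatively projective in $E_\ast$'' means that $E_k$ is a projective object of $\cC_k=(E_{k+1}\oplus\cdots\oplus E_n)^\perp$, and ``$E_{n-k+1}'$ relatively injective in $\Delta E_\ast$'' means that $E_{n-k+1}'$ is an injective object of $\cD:={}^\perp(E_1'\oplus\cdots\oplus E_{n-k}')$, since $E_{n-k+1}'$ occupies position $n-k+1$ of $\Delta E_\ast$.

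Next I would establish $\cC_k=\cD$. From the proof of Lemma \ref{lem: Ek'}, the partial braid $\delta_{k-1}\cdots\delta_1$ sends $E_\ast$ to $(E_{n-k+1}',\dots,E_n',E_{k+1},\dots,E_n)$, whose first $k$ entries therefore form a complete exceptional sequence for $\cC_k$. The remaining braid moves $\delta_{n-1}\cdots\delta_k$ successively pull $E_{k+1},\dots,E_n$ to the front, altering only the object being pulled forward; hence the block $E_{n-k+1}',\dots,E_n'$ is carried unchanged into the last $k$ positions of $\Delta E_\ast$, where, by the dual of the splitting used above, it forms a complete exceptional sequence for $\cD={}^\perp(E_1'\oplus\cdots\oplus E_{n-k}')$. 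Since a complete exceptional sequence determines the wide subcategory it generates, $\cC_k=\cD$.

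With this identity in hand, the lemma becomes the assertion that $E_k$ is projective in $\cC_k$ if and only if $E_{n-k+1}'$ is injective in $\cC_k$, which is exactly the content of Lemma \ref{lem: Ek'}. If $E_k$ is not relatively projective, part (1) gives $E_{n-k+1}'=\tau_kE_k$, and since $\tau_k$ is a bijection from the indecomposable non-projectives of $\cC_k$ to the indecomposable non-injectives, $E_{n-k+1}'$ is not injective in $\cC_k$. If $E_k$ is relatively projective, part (2) realizes $E_{n-k+1}'$ as an injective envelope formed inside $\cC_k$, hence an injective object of $\cC_k$. (Equivalently, one may phrase this step via the Euler form: $E_k$ is relatively projective precisely when $\chi_\Lambda(E_k,E_{n-k+1}')=+1$ rather than $-1$.) Combining the two steps yields the lemma.

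I expect the only genuinely non-formal point to be the bookkeeping in the second paragraph: verifying that, in the factorization $\Delta=\delta_{n-1}\cdots\delta_1$, the objects $E_{n-k+1}',\dots,E_n'$ are transported untouched to the tail of $\Delta E_\ast$ once created, together with invoking the standard fact that a complete exceptional sequence pins down its ambient wide subcategory. Everything else is immediate from Lemma \ref{lem: Ek'}.
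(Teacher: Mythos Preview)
Your proof is correct and follows the same route as the paper, which simply records the lemma as an immediate consequence of Lemma~\ref{lem: Ek'} without further argument. You have supplied the detail the paper omits: the identification $\cC_k={}^\perp(E_1'\oplus\cdots\oplus E_{n-k}')$, obtained by tracking the block $(E_{n-k+1}',\dots,E_n')$ through the remaining factors $\delta_{n-1}\cdots\delta_k$ of $\Delta$, is exactly what is needed to translate ``injective in $\cC_k$'' (which is what Lemma~\ref{lem: Ek'} gives) into ``relatively injective in $\Delta E_\ast$''.
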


In the proof of Lemma \ref{lem: Ek'} we saw that
\[
	\delta_{k-1}^{-1}(E_{n-k+1}',E_{n-k+2}',\cdots,E_n')=(E_{n-k+2}',\cdots,E_n',E_k).
\]
If we apply this braid move to the last $k$ objects in $\Delta E_\ast$ we obtain the following.

\begin{lem}\label{lem: Ek is right perpendicular to everything}
With the notation $\Delta (E_1,\cdots,E_n) =(E_1',\cdots,E_n')$ we have that
\[
	(E_1',\cdots,E_{n-k}',E_{n-k+2}',\cdots,E_n',E_k)
\]
is a complete exceptional sequence. In particular,
\[
	\chi_\Lambda(E_k,E'_j)=0
\]
for $j\neq n-k+1$.
\end{lem}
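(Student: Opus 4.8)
The plan is to deduce this directly from the computation already carried out in the proof of Lemma~\ref{lem: Ek'}, by applying the inverse braid move $\delta_{k-1}^{-1}$ to the last $k$ entries of $\Delta E_\ast$.

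First I would invoke the standard fact that $B_n$ acts on the set of complete exceptional sequences, so that every braid word carries a complete exceptional sequence to a complete exceptional sequence; in particular this applies to the copy of $B_k$ acting on the last $k$ strands, hence to $\delta_{k-1}=\sigma_1\sigma_2\cdots\sigma_{k-1}$ and to its inverse. From the last two displays in the proof of Lemma~\ref{lem: Ek'} we already have, inside this $B_k$, the identity
\[
	\delta_{k-1}\bigl(E_{n-k+2}',\dots,E_n',E_k\bigr)=\bigl(E_{n-k+1}',E_{n-k+2}',\dots,E_n'\bigr),
\]
equivalently $\delta_{k-1}^{-1}\bigl(E_{n-k+1}',\dots,E_n'\bigr)=\bigl(E_{n-k+2}',\dots,E_n',E_k\bigr)$.

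Next I would apply $\delta_{k-1}^{-1}$ to the strands in positions $n-k+1,\dots,n$ of $\Delta E_\ast=(E_1',\dots,E_n')$. The first $n-k$ objects $E_1',\dots,E_{n-k}'$ are untouched, and by the identity above the last $k$ objects become $E_{n-k+2}',\dots,E_n',E_k$. Since this is a braid move it preserves the property of being a complete exceptional sequence, so
\[
	\bigl(E_1',\dots,E_{n-k}',E_{n-k+2}',\dots,E_n',E_k\bigr)
\]
is a complete exceptional sequence, as asserted.

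Finally, in this sequence $E_k$ occupies the last (the $n$th) position, and the objects preceding it are exactly the $E_j'$ with $j\neq n-k+1$. The defining vanishing for an exceptional sequence gives $\Hom_\Lambda(E_k,E_j')=0=\Ext_\Lambda(E_k,E_j')$ for every such $j$, hence $\chi_\Lambda(E_k,E_j')=\dim\Hom_\Lambda(E_k,E_j')-\dim\Ext_\Lambda(E_k,E_j')=0$. The argument is essentially bookkeeping; the only point requiring care is to track which block of strands $\delta_{k-1}$ acts on and to read the identity from the proof of Lemma~\ref{lem: Ek'} in the correct (reversed) direction, so I do not expect a genuine obstacle here.
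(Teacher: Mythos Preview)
Your argument is correct and follows exactly the approach indicated in the paper: the paper's justification (given in the sentence immediately preceding the lemma) is precisely to apply $\delta_{k-1}^{-1}$ to the last $k$ positions of $\Delta E_\ast$, using the identity extracted from the proof of Lemma~\ref{lem: Ek'}. Your write-up simply fleshes out this one-line hint, including the observation that the mutation on the last $k$ positions only depends on those $k$ objects, and the deduction of the $\chi_\Lambda$ vanishing from $E_k$ being in the last position.
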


\begin{prop}\label{prop: properties of Delta F}
Let $F'=\Delta F$ for a rooted labeled forest $F$ with $n$ vertices. Then
\begin{enumerate}
\item  $v_{j_1},\cdots, v_{j_p}$ are the projective vertices of $F$ if and only if $v_{n-j_1+1}',\cdots, v_{n-j_p+1}'$ are the roots of $\Delta F$.
\item $v_{k_1},\cdots, v_{k_r}$ are the roots of $F$ if and only if $v_{n-k_1+1}',\cdots, v_{n-k_r+1}'$ are the injective vertices of $\Delta F$.
\end{enumerate}
\end{prop}

\begin{proof}
Since $\Delta^2=\delta^n$, it follows from Corollary \ref{cor: action of central element Delta 2=delta to n} that (1) and (2) are equivalent. So, we prove only (1). 

Statement (1) says that $v_k$ is a projective vertex of $F$ if and only if $v_{n-k+1}'$ is a root of $F'=\Delta F$. We prove this by induction on $n-k$. For $k=n$, $E_n$ is projective if and only if $v_n$ is a root of $F$ if and only if the first vertex $v_1'$ of $\Delta F$ is a root (See Remark \ref{rem: action of delta on exc seq}). So, the statement holds for $k=n$.

Now suppose the statement holds for $k+1$. Then, we observe that the braid move $\sigma_k$ moves $E_k$ to the $k+1$st position in $\sigma_k E_\ast$:
\[
	\sigma_k(E_1,\cdots,E_k,\cdots,E_n)=(E_1,\cdots,E_{k-1},E_{k+1}^\ast,E_k,E_{k+2},\cdots,E_n).
\]
Therefore, by induction on $n-k$ we have that $E_k$ is a projective module if and only if the $(n-k)$th vertex of $\Delta \sigma_kF=\sigma_{n-k}\Delta F$ is a root. By Proposition \ref{root prop}, this is equivalent to the $(n-k+1)$st vertex of $\Delta F$ being a root. Thus the statement holds for $k$.

This proves (1) in the proposition.
\end{proof}

\begin{cor}
$\Delta E_\ast$ has a simple injective object if and only if $E_\ast$ has only one injective object. Similarly, $E_\ast$ has a simple projective object if and only if $\Delta E_\ast$ has only one projective object.
\end{cor}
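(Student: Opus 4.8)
The plan is to translate each side of the stated equivalence into a statement about a single distinguished object of one of the two exceptional sequences, and then to identify that object. I will carry this out for the first equivalence; the second is its dual (replace Lemma~\ref{lem: injective vertices} by Lemma~\ref{lem: projective vertices}, part~(2) of Proposition~\ref{prop: properties of Delta F} by part~(1), and interchange ``submodule/quotient'' with ``injective/projective'' throughout), or else it follows from the first by applying the duality $\Hom_{\kk}(-,\kk)$, which interchanges projectives and injectives and, by the relation $\Delta\sigma_i\Delta^{-1}=\sigma_{n-i}$, is compatible with the action of $\Delta$.

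First I would reformulate the two sides. For the linear quiver $1\to\cdots\to n$ the indecomposable injectives are $\iota_1=S_1\subsetneq\iota_2\subsetneq\cdots\subsetneq\iota_n$, and $\iota_1=S_1$ is the only simple one; since, by Lemma~\ref{lem: injective vertices}, the injective objects of any complete exceptional sequence form a chain of epimorphisms and hence are totally ordered by length, ``$\Delta E_\ast$ has a simple injective object'' is equivalent to ``the shortest injective object of $\Delta E_\ast$ is $S_1$''. On the other hand, again by Lemma~\ref{lem: injective vertices}, ``$E_\ast$ has only one injective object'' means that $F$ has a single injective vertex, which happens precisely when the first root $v_{k_1}$ of $F$ --- the root of smallest label, whose module is $E_{k_1}=M_{1,b}$ with $b=w(v_{k_1})$ --- has no relatively injective child, i.e.\ no child with label larger than $k_1$.

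Next I would locate the shortest injective object of $\Delta E_\ast$. By Proposition~\ref{prop: properties of Delta F}(2) the injective vertices of $\Delta F$ are exactly the vertices $v'_{n-c+1}$ for $v_c$ a root of $F$; among these the one giving the shortest injective object is the deepest, i.e.\ the one of largest label $n-c+1$, which is the one coming from the root of smallest label, $c=k_1$. Hence the shortest injective object of $\Delta E_\ast$ is $E'_{n-k_1+1}$, and the corollary is reduced to the single equivalence
\[
E'_{n-k_1+1}\cong S_1 \iff v_{k_1}\text{ has no relatively injective child in }F.
\]

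The proof of this last equivalence is the step I expect to be the main obstacle. Since $v_{k_1}$ is a root, $E_{k_1}$ is relatively projective, so by Lemma~\ref{lem: Ek'}(2) the object $E'_{n-k_1+1}$ is the injective envelope, inside the perpendicular category $\cC_{k_1}$, of the relatively simple $\cC_{k_1}$-top of $E_{k_1}=M_{1,b}$. The idea is that $S_1$ lies in $\cC_{k_1}=(E_{k_1+1}\oplus\cdots\oplus E_n)^\perp$ if and only if no $E_j$ with $j>k_1$ maps onto $S_1$ --- this is the only obstruction, because over this orientation $\Ext(-,S_1)$ vanishes on all interval modules --- and an $E_j$ with $j>k_1$ mapping onto $S_1$ is the same as an injective module other than $E_{k_1}$ occurring in $E_\ast$, hence the same (by the first step) as $v_{k_1}$ having a relatively injective child. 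When $S_1\in\cC_{k_1}$ the $\Lambda$-surjection $M_{1,b}\onto S_1$ is an epimorphism in the exact abelian subcategory $\cC_{k_1}$ onto a $\cC_{k_1}$-simple object, so $S_1$ is the $\cC_{k_1}$-top of $E_{k_1}$; showing that it is then its own $\cC_{k_1}$-injective envelope, so that $E'_{n-k_1+1}\cong S_1$, is where the explicit description of $\cC_{k_1}$ as the module category of a smaller linear $A_m$ (together with Proposition~\ref{prop: properties of Ei}(4) describing the children of $v_{k_1}$) will be needed. Conversely, if $S_1\notin\cC_{k_1}$ then the $\cC_{k_1}$-top of $E_{k_1}$, being a $\Lambda$-quotient $M_{1,c}$ of $M_{1,b}$ lying in $\cC_{k_1}$, has $c\ge 2$, and $E'_{n-k_1+1}$ has length at least that, so it is not $S_1$. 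If one is willing to use the unproved part~(4) of Proposition~\ref{prop: properties of Delta F}, the equivalence is immediate: there $E'_{n-k_1+1}$ is identified as the shortest injective object of $\Delta E_\ast$ whose children in $\Delta F$ are exactly the images of the remaining injective objects of $E_\ast$, so it is a leaf --- equivalently a simple module, by Proposition~\ref{prop: length=weight} --- if and only if $E_\ast$ has just one injective object.
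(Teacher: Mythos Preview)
Your argument is correct. One step you flag as the main obstacle is in fact immediate: when $S_1\in\cC_{k_1}$, the module $S_1$ is already injective in $mod\text-\Lambda$, hence injective in the full exact subcategory $\cC_{k_1}$, so it is its own $\cC_{k_1}$-injective envelope and no explicit description of $\cC_{k_1}$ is needed. Likewise the converse is simpler than you make it: $E'_{n-k_1+1}\in\cC_{k_1}$ by Lemma~\ref{lem: Ek'}, so if $S_1\notin\cC_{k_1}$ then $E'_{n-k_1+1}\neq S_1$ outright.

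The paper gives no explicit proof, but the placement right after Proposition~\ref{prop: properties of Delta F} indicates the one-line argument you sketch at the end: by part~(4), the last (smallest) injective vertex of $\Delta F$ has exactly $q-1$ children, so it is a leaf --- equivalently simple, by Proposition~\ref{prop: length=weight} --- if and only if $E_\ast$ has $q=1$ injective objects. Your main argument takes a genuinely different route, deducing the result from the \emph{proven} part~(2) of Proposition~\ref{prop: properties of Delta F} together with the explicit description of $E'_{n-k_1+1}$ in Lemma~\ref{lem: Ek'}. This buys self-containment, since part~(4) is stated without proof; the cost is the extra computation inside $\cC_{k_1}$.

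For the second equivalence, your duality argument works, but there is also a direct route using only proven statements: by Proposition~\ref{prop: properties of Delta F}(1) applied to $\Delta F$, the projective vertices of $\Delta F$ correspond to the roots of $\Delta^2 F=\delta^n F$, and by Corollary~\ref{cor: action of central element Delta 2=delta to n} those roots are the smallest projective vertex $v_{j_1}$ of $F$ together with its children. Hence $\Delta E_\ast$ has a unique projective object if and only if $v_{j_1}$ is a leaf, i.e.\ $E_{j_1}$ is simple.
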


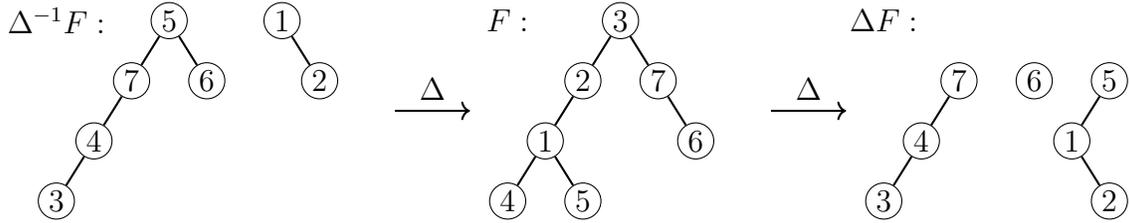
\begin{figure}[htbp]
\begin{center}
\begin{tikzpicture}[yscale=.8]
\begin{scope}
\draw[thick] (0,0)--(.5,1)--(1,0) (.5,1)--(1,2)--(1.5,3)--(2,2)--(2.5,1);
\draw[white,fill] (0,0) ellipse[x radius=2.4mm, y radius=3mm];
\draw (0,0) circle[x radius=2.4mm, y radius=3mm] (0,0)node{4};
\draw (0,3) node{$F:$};
\draw[white,fill] (1,0) circle[x radius=2.4mm, y radius=3mm];
\draw (1,0) circle[x radius=2.4mm, y radius=3mm] (1,0)node{5};
\draw[white,fill] (0.5,1) circle[x radius=2.4mm, y radius=3mm];
\draw (0.5,1) circle[x radius=2.4mm, y radius=3mm] (0.5,1)node{1};
\draw[white,fill] (2.5,1) circle[x radius=2.4mm, y radius=3mm];
\draw (2.5,1) circle[x radius=2.4mm, y radius=3mm] (2.5,1)node{6};
\draw[white,fill] (1.5,3) circle[x radius=2.4mm, y radius=3mm];
\draw (1.5,3) circle[x radius=2.4mm, y radius=3mm] (1.5,3)node{3};
\draw[white,fill] (2,2) circle[x radius=2.4mm, y radius=3mm];
\draw (2,2) circle[x radius=2.4mm, y radius=3mm] (2,2)node{7};
\draw[white,fill] (1,2) circle[x radius=2.4mm, y radius=3mm];
\draw (1,2) circle[x radius=2.4mm, y radius=3mm] (1,2)node{2};
\end{scope}
\begin{scope}[xshift=-4.5cm]
\draw[thick,->] (3,1.5)--(4,1.5);
\draw (3.5,1.5) node[above]{$\Delta$};
\end{scope}
\begin{scope}[xshift=-6cm]
\draw (0,3) node{$\Delta^{-1}F:$};
\draw[thick] (0,0)--(.5,1) (.5,1)--(1,2)--(1.5,3)--(2,2) (3,3)--(3.5,2);
\draw[white,fill] (0,0) circle[x radius=2.4mm, y radius=3mm];
\draw (0,0) circle[x radius=2.4mm, y radius=3mm] (0,0)node{3};
\draw[white,fill] (0.5,1) circle[x radius=2.4mm, y radius=3mm];
\draw (0.5,1) circle[x radius=2.4mm, y radius=3mm] (0.5,1)node{4};
\draw[white,fill] (3,3) circle[x radius=2.4mm, y radius=3mm];
\draw (3,3) circle[x radius=2.4mm, y radius=3mm] (3,3)node{1};
\draw[white,fill] (3.5,2) circle[x radius=2.4mm, y radius=3mm];
\draw (3.5,2) circle[x radius=2.4mm, y radius=3mm] (3.5,2)node{2};
\draw[white,fill] (1.5,3) circle[x radius=2.4mm, y radius=3mm];
\draw (1.5,3) circle[x radius=2.4mm, y radius=3mm] (1.5,3)node{5};
\draw[white,fill] (2,2) circle[x radius=2.4mm, y radius=3mm];
\draw (2,2) circle[x radius=2.4mm, y radius=3mm] (2,2)node{6};
\draw[white,fill] (1,2) circle[x radius=2.4mm, y radius=3mm];
\draw (1,2) circle[x radius=2.4mm, y radius=3mm] (1,2)node{7};
\end{scope}
\begin{scope}[xshift=5mm]
\draw[thick,->] (3,1.5)--(4,1.5);
\draw (3.5,1.5) node[above]{$\Delta$};
\end{scope}
\begin{scope}[xshift=5cm]
\draw (0,3) node{$\Delta F:$};
\draw[thick] (0,0)--(.5,1)--(1,2) (3,0)--(2.5,1)--(3,2);
\draw[white,fill] (1,2) circle[x radius=2.4mm, y radius=3mm];
\draw (1,2) circle[x radius=2.4mm, y radius=3mm] (1,2)node{7};
\draw[white,fill] (0,0) circle[x radius=2.4mm, y radius=3mm];
\draw (0,0) circle[x radius=2.4mm, y radius=3mm] (0,0)node{3};
\draw[white,fill] (.5,1) circle[x radius=2.4mm, y radius=3mm];
\draw (.5,1) circle[x radius=2.4mm, y radius=3mm] (.5,1)node{4};
\draw[white,fill] (2,2) circle[x radius=2.4mm, y radius=3mm];
\draw (2,2) circle[x radius=2.4mm, y radius=3mm] (2,2)node{6};
\draw[white,fill] (3,2) circle[x radius=2.4mm, y radius=3mm];
\draw (3,2) circle[x radius=2.4mm, y radius=3mm] (3,2)node{5};
\draw[white,fill] (2.5,1) circle[x radius=2.4mm, y radius=3mm];
\draw (2.5,1) circle[x radius=2.4mm, y radius=3mm] (2.5,1)node{1};
\draw[white,fill] (3,0) circle[x radius=2.4mm, y radius=3mm];
\draw (3,0) circle[x radius=2.4mm, y radius=3mm] (3,0)node{2};
\end{scope}
\end{tikzpicture}
\caption{Illustrating Proposition \ref{prop: properties of Delta F}: The projective vertices $v_1,v_2,v_3$ of $F$ become the roots $v_7,v_6,v_5$ resp. of $\Delta F$. The only root $v_3$ of $F$ becomes the only injective vertex $v_5$ of $\Delta F$. The injective vertices $v_3,v_7$ of $F$ correspond to the last injective $v_5$ in $\Delta F$ and its child $v_1$. The first projective in $F$ and its children become the projective vertices of $\Delta F$. For the placement of $v_2$ in $\Delta F$ we go to $\Delta^{-1}F$ then apply $\delta^7=\Delta^2$.}
\label{fig: example of Delta on A7}
\end{center}
\end{figure}

\subsection{Clusters and signed exceptional sequences}\label{ss: clusters and exc seq}

The Garside element $\Delta$ takes a ``support tilting set'' for any hereditary algebra to the corresponding ``signed exceptional sequence'' \cite{IT13}. This is a comment made in \cite{BRT} using different terminology (before the introduction of signed exceptional sequences in \cite{IT13}).

\begin{defn}
We define a \emph{support tilting set} for a hereditary algebra $\Lambda$ with $n$ simple objects to be a set of $n$ nonisomorphic modules and ``shifted projective modules'' $T_1,\cdots,T_n$ (each $T_i$ is either an indecomposable module $M_i$ or a shifted indecomposable projective module $P_k[1]$) having the property that $\Hom_{\cD^b}(T_i,T_j[1])=0$ in the bounded derived category $\cD^b={\cD^b(mod\text-\Lambda)}$ for all $i,j$. In other words, $\Ext_\Lambda(M_i,M_j)=0$ for all $i,j$ and $\Hom_\Lambda(P_k,M_i)=0$ for all $k,i$.

For example, in $A_3$, $\{S_1,S_3,P_2[1]\}$ is a support tilting set since $S_1,S_3$ do not extend each other ($\Ext_\Lambda(S_1,S_3)=\Ext_\Lambda(S_3,S_1)=0$) and $\Hom_\Lambda(P_2,S_1\oplus S_3)=0$.
\end{defn}

\begin{defn}
By a \emph{signed exceptional sequence} we mean an exceptional sequence $E_\ast=(E_1,\cdots,E_n)$ together with a sequence $\varepsilon_\ast$ of signs $\varepsilon_i\in\{+1,-1\}$ so that $\varepsilon_i=-1$ implies $E_i$ is relatively projective. We write the sequence as $(X_1,\cdots,X_n)$ where
\[
	X_i=\begin{cases} E_i[1] & \text{if }\varepsilon_i=-1\\
    E_i& \text{otherwise}.
    \end{cases}
\]
For example, in $A_3$, $(S_1[1],S_3,P_2)$ is a signed exceptional sequence since $S_1$ is relatively projective. (The first object in an exceptional sequence is always relatively projective.)
\end{defn}

In \cite{IT13}, an explicit correspondence between signed exceptional sequences and support tilting objects is shown.

\begin{thm}\label{thm: IT13: signed exc seq}\cite{IT13}
Over any hereditary algebra, there is a 1-1 correspondence between signed exceptional sequences and ordered support tilting sets.
\end{thm}

Thus, for type $A_n$, the number of signed exceptional sequences is $n!$ times the Catalan number $C_{n+1}=\frac1{n+2}\binom{2n+2}{n+1}$. For linear $A_n$, this can also be seen from our generating function $p_n(a,b,c)$ in Theorem \ref{thm: main theorem}. Since the relative projective objects can have any sign, the number of signed exceptional sequences is
\[
	p_n(2,1,2)=2(n-1+2(2))(n-2+2(4))\cdots(1+2(n))=2\frac{(2n+1)!}{(n+2)!}=n!C_{n+1}.
\]

For linear $A_n$, there is an explicit correspondence using the Garside element $\Delta$.

\begin{prop}\label{thm: clusters and c-vectors}
Any support tilting set can be arranged into a signed exceptional sequence $E_\ast=(E_1,\cdots,E_n)$ with signs $\varepsilon_1,\cdots,\varepsilon_n$. Furthermore, 
\begin{equation}\label{eq: Delta E}
	\Delta E_\ast=(E'_n,\cdots,E'_1)\text{ with signs } \varepsilon_n',\cdots,\varepsilon_1'
\end{equation}
is a signed exceptional sequence with the property that
\begin{equation}\label{eq: c-vectors}
	\epsilon_k\epsilon_j'\chi_\Lambda(E_k,E_j')=\delta_{kj}.
\end{equation}
In addition, the signed exceptional sequence \eqref{eq: Delta E} corresponds to the ordered support tilting set $(\varepsilon_nE_n,\cdots,\varepsilon_1E_1)$, which is $E_\ast$ with signs $\varepsilon_i$ in reverse order.
\end{prop}

\begin{rem}It is stated in \cite{BRT} that this holds for any hereditary algebra. Here we prove it in the case of linear $A_n$ using rooted labeled forests. Also, Equation \eqref{eq: c-vectors} implies that $-\varepsilon_j'\undim E_j'$ are the ``$c$-vectors'' of the ``cluster'' $\{\varepsilon_k\undim E_k\}$ if we take the ``initial seed'' to be the shifted projectives \cite{IOTW}.
\end{rem}

\begin{proof} It is an observation originally due to Schofield \cite{Schofield} that a support tilting set can be arranged in an exceptional sequence. In the case of finite type, this is very easy to see: take the left-to-right order of the objects as they appear in the Auslander-Reiten quiver. The shifted projective objects can be placed after these in increasing order of size. Since all negative (shifted) objects are projective, the result is a signed exceptional sequence $E_\ast,\varepsilon_\ast$.

To see that the exceptional sequence $\Delta E_\ast$ with signs $\varepsilon_\ast$ is a signed exceptional sequence we examine which objects have negative signs.
\begin{enumerate}
\item If $E_k$ is a projective module then $E_k$ and $E'_k$ have the same sign: $\varepsilon_k=\varepsilon'_k$. Since $v_k$ is a root of the forest $\Delta F$, $E'_k$ is relatively projective and either sign is allowed.
\item If $E_k$ is any relative projective object which is not projective then it is positive. So, $\varepsilon'_k=\varepsilon_k=+1$.
\item In all other cases, $E_k$ is relatively injective but not relatively projective. So, $E_k$ is positive and $\varepsilon'_k=-\varepsilon_k=-1$. This is OK since $E'_k$ is relatively projective (but not relatively injective) in $\Delta E_\ast$.
\end{enumerate}
The conclusion is that all objects in $\Delta E_\ast$ which are relatively projective but not relatively injective have a negative sign and those which are relatively injective but not relatively projective have positive sign. The roots might have either sign. So, $\Delta E_\ast$ with signs $\varepsilon_\ast'$ is a signed exceptional sequence.

By Lemma \ref{lem: Ek'} we have $\chi_\Lambda(E_k,E'_k)=\varepsilon_k\varepsilon'_k$. By Lemma \ref{lem: Ek is right perpendicular to everything}, $\chi_\Lambda(E_k,E'_j)=0$ when $j\neq k$. Equation \eqref{eq: c-vectors} follows.

To prove the last statement we recall some results from \cite{IT13}. By \cite[Corollary 2.17]{IT13}, the dimension vectors $\varepsilon_i'\undim E_i'$ of the signed exceptional sequence $(E_n',\cdots,E_1')$, with signs $\varepsilon_i'$, corresponds to the ordered support tilting set $(T_n,\cdots,T_1)$, with signs $\varepsilon_i$, are equal to the negative $c$-vector: $-\beta_i$ corresponding to the ordered support tilting set:
\begin{equation}\label{eq: -bi are dim Ei}
	\varepsilon_i'\undim E_i'=-\beta_i
\end{equation}
assuming that $(T_1,\cdots,T_n)$, which are $T_i$ in reverse order, form an exceptional sequence. This is the case here by construction if we take $T_i=E_{i}$.

By \cite[Theorem 2.14]{IT13}, the $c$-vectors $\beta_i$ corresponding to the support tilting set $\{\varepsilon_iT_i\}$ are the unique solutions of the equation:
\begin{equation}\label{eq: betaj are negative duals of ekTk}
    \epsilon_k\chi_\Lambda(T_k,\beta_j)=-\delta_{kj}.
\end{equation}
Combining \eqref{eq: betaj are negative duals of ekTk},\eqref{eq: -bi are dim Ei} and \eqref{eq: c-vectors} we see that the signed exceptional sequence \eqref{eq: Delta E} corresponds to the ordered cluster tilting set $(T_n,\cdots,T_1)$, with signs $\varepsilon_i$ as claimed.
\end{proof}

\begin{eg}\label{eg: Delta A2 cluster to exc seq}
For $A_2$, there are five support tilting objects. These are listed below, ordered as signed exceptional sequences. Applying $\Delta$ we get $\Delta(E_1,E_2)=(E_2',E_1')$ with $E_1'=E_1$ and $E_2'=-\tau E_2$ so that $\chi(E_i,E_j')=\varepsilon_i\varepsilon_j'\delta_{ij}$. We interpret $X[1]$ as negative $X$.
\begin{enumerate}
\item $(P_2,P_1)\xrightarrow\Delta(S_1,P_2)$
\item $(P_1,S_1)\xrightarrow\Delta(P_2[1],P_1)$
\item $(S_1,P_2[1])\xrightarrow\Delta(P_1[1],S_1)$
\item $(P_2[1],P_1[1])\xrightarrow\Delta(S_1[1],P_2[1])$
\item $(P_2,P_1[1])\xrightarrow\Delta(S_1[1],P_2)$
\end{enumerate} 
For example, in (2), $\tau S_1 =P_2$. So, $\Delta(P_1,S_1)=(-\tau S_1,P_1)=(P_2[1],P_1)$ with $\chi(S_1,P_2)=\dim\Hom_\Lambda(S_1,P_2)-\dim\Ext_\Lambda(S_1,P_2)=-1$, the sign of $P_2[1]$.
\end{eg}

\begin{eg}
None of the forests in Figure \ref{fig: example of Delta on A7} are examples of support tilting sets. In the first and second forest, there is a vertex with two relatively injective children. These extend each other. The last $\Delta F$ has three roots. The smaller two extend each other. (We could shift the projective module $E_7$.) In Figure \ref{fig: A3 example Delta}, the second and last forests are not support tilting for the same reasons. The second has two relatively injective children of the root and the last has three roots which is not allowed. The first forest is support tilting if we shift the projective module $E_3$. This gives $(S_2,I_2,S_3[1])$. The corresponding signed exceptional sequence is given by the second forest with its root shifted: $(P_1[1],S_1,S_2)$. To illustrate Equation \eqref{eq: c-vectors} note that $\Hom(I_2,S_1)=K$, but $\Hom(I_2,S_2)=0=\Ext(I_2,S_2)$ and $\Hom(I_2,P_1)=0=\Ext(I_2,P_1)$.
\end{eg}


\subsection{Action of extended braid group}\label{ss: extended braid group}

The \emph{extended braid group} is
\[
	\widetilde B_n:=B_n\rtimes_C \ZZ/2
\]
using the outer involution $C$ given by $\sigma_i \mapsto\sigma_i^{-1}$. We refer to $C$ as \emph{complex conjugation}. Equivalently, we use $D=\Delta C$ in $\widetilde B_n$ conjugation by which gives another involution:
\[
	D\sigma_iD= \sigma_{n-i}^{-1}.
\]
We call this \emph{duality} since it corresponds to $D:mod$-$\Lambda\to mod$-$\Lambda^{op}$ given by
\[
	DM=\Hom_K(M,K).
\]
This reverses the order of an exceptional sequence:
\[
	D(E_1,\cdots,E_n)=(DE_n,\cdots,DE_1)
\]
since $\chi_\Lambda(X,Y)=\chi_{\Lambda^{op}}(DY,DX)$. The operation $D$ gives an involution on the set of complete exceptional sequences of the linear $A_n$ quiver since this quiver is isomorphic to its opposite quiver. The modules $DE_i$ have the same support inclusion relations as the $E_i$. Therefore, duality $D$ acts on the corresponding rooted labeled forests by reversing the order of vertices: $DF\cong F$ with vertex labels $v_i'=v_{n-i+1}$.

Complex conjugation $C=D\Delta$ has a cleaner description than the Garside element:

\begin{prop}\label{prop: properties of C F}
Let $F'=C F=D\Delta F$. Then $F=D\Delta F'$, i.e., $C=D\Delta$ is an involution. Furthermore:
\begin{enumerate}
\item $v_{j_1},\cdots, v_{j_p}$ are the projective vertices of $F$ if and only if $v_{j_1}',\cdots, v_{j_p}'$ are the roots of $F'$.
\item $v_{k_1},\cdots,v_{k_q}$ are the injective vertices of $F$ with $k_1<k_2<\cdots<k_q$ if and only if $v_{k_1}'$ is the first projective vertex of $F'$ and $v_{k_2}',\cdots,v_{k_q}'$ are its children.
\item Dually, $v_{\ell_q}$ is the last injective vertex of $F$ and $v_{\ell_1},\cdots,v_{\ell_{q-1}}$ are its children if and only if $v_{\ell_1}'',\cdots,v_{\ell_q}''$ are the projective vertices of $F''=D\Delta^{-1}F$.
\end{enumerate}
\end{prop}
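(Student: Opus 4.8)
The plan is to reduce the whole proposition to the action of $\Delta$ that is already in hand. Recall from the discussion of duality above that $D$ acts on a rooted labeled forest $G$ by the order‑reversing relabeling $v_i\mapsto v_{n-i+1}$, leaving the underlying tree unchanged; hence $D^2=\mathrm{id}$, $D$ sends roots to roots, and — being induced by the module duality, which swaps projectives and injectives — it interchanges the projective vertices of $G$ with the injective vertices of $G$. (Combinatorially: the rules in Lemmas \ref{lem: projective vertices} and \ref{lem: injective vertices}, "last root / last relatively projective child" versus "first root / first relatively injective child", are exchanged by $i\mapsto n-i+1$.) I will use only the proven part $(1)$ of Proposition \ref{prop: properties of Delta F}, its square Corollary \ref{cor: action of central element Delta 2=delta to n}, and the identity $C=D\Delta$ from the text, together with $C\Delta C=\Delta^{-1}$ (equivalently $D\Delta D=\Delta^{-1}$); the latter shows that $D\Delta$ and $D\Delta^{-1}$ are both involutions on forests, which is exactly the asserted $F=D\Delta F'$ and $F=D\Delta^{-1}F''$, and also gives $C(DF)=D\Delta D\,F=\Delta^{-1}F$, so $F''=D\Delta^{-1}F=D\,C(DF)$.

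Part $(1)$ is a direct transport: from $F'=D\Delta F$ we get $\Delta F=DF'$, so Proposition \ref{prop: properties of Delta F}$(1)$ says that $v_{j_1},\dots,v_{j_p}$ are the projective vertices of $F$ iff $v_{n-j_1+1},\dots,v_{n-j_p+1}$ are the roots of $DF'$; applying $D$ (which fixes the tree and relabels $n-j_s+1\mapsto j_s$) this holds iff $v_{j_1},\dots,v_{j_p}$ are the roots of $F'$.

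For $(2)$ the crucial step — and the one I expect to take the most care — is an auxiliary claim: \emph{for every forest $F$, the injective vertices of $CF$ consist of the smallest projective vertex $v_{j_1}$ of $F$ together with the children of $v_{j_1}$ in $F$} (by Lemma \ref{lem: projective vertices} the projective vertices satisfy $j_1<\dots<j_p$, so $v_{j_1}$ is the "first" projective vertex, has the smallest label among them, and all its children carry labels $>j_1$). I would prove this by writing $\delta^nF=\Delta^2F=\Delta(\Delta F)=\Delta(DF')$, applying Proposition \ref{prop: properties of Delta F}$(1)$ to the forest $DF'$ to identify — through the two $i\mapsto n-i+1$ relabelings and the projective/injective swap — the roots of $\delta^nF$ with the injective vertices of $CF$, and comparing with Corollary \ref{cor: action of central element Delta 2=delta to n}, which states that the roots of $\delta^nF$ are precisely $v_{j_1}$ and the children of $v_{j_1}$ in $F$. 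Feeding $F'=CF$ back into this claim yields $(2)$: with $k_1<\dots<k_q$ the labels of the injective vertices of $F$, the smallest one $k_1$ must be the label of the first projective vertex of $F'$ (its children have larger labels), so $v_{k_1}'$ is that vertex and $v_{k_2}',\dots,v_{k_q}'$ are its children.

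Part $(3)$ is the $\Lambda\leftrightarrow\Lambda^{op}$ mirror of $(2)$: I would apply the auxiliary claim of $(2)$ to $DF$ in place of $F$, use $F''=D\,C(DF)$ with $C(DF)=\Delta^{-1}F$, and push the resulting identity back through $D$; since $D$ turns "smallest / first" into "largest / last" and projective into injective, the statement "injective vertices of $C(DF)$ $=$ smallest projective vertex of $DF$ and its children" becomes "projective vertices of $F''$ $=$ last injective vertex of $F$ and its children", which is $(3)$. The only genuinely mathematical point in the argument is the auxiliary claim in $(2)$, and within it the observation that the first projective vertex of $F'$ is the injective vertex of $F$ of smallest label; everything else is the routine — though error‑prone — task of carrying two simultaneous order‑reversing relabelings through Proposition \ref{prop: properties of Delta F}$(1)$ and Corollary \ref{cor: action of central element Delta 2=delta to n}.
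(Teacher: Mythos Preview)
The paper states this proposition without proof. The obvious route would be to transport Proposition~\ref{prop: properties of Delta F} through $D$; part~(1) here is exactly the $D$-image of Proposition~\ref{prop: properties of Delta F}(1), and parts~(2) and~(3) would be the $D$-images of Proposition~\ref{prop: properties of Delta F}(4) and~(3). But the paper explicitly declines to prove Proposition~\ref{prop: properties of Delta F}(3),(4) (``We prove only (1) since (3) and (4) are not needed in this paper''), so there is a genuine gap in the paper at this point.

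Your argument correctly fills that gap using only what is actually proved. Part~(1) is the direct $D$-transport of Proposition~\ref{prop: properties of Delta F}(1), as you say. For~(2), instead of invoking the unproven Proposition~\ref{prop: properties of Delta F}(4), you factor $\delta^nF=\Delta(DF')$ and apply Proposition~\ref{prop: properties of Delta F}(1) to $DF'$: this identifies the roots of $\delta^nF$ with the injective vertices of $F'$ (after the two $i\mapsto n-i+1$ relabelings and the projective/injective swap), and Corollary~\ref{cor: action of central element Delta 2=delta to n} then pins those roots down as $v_{j_1}$ together with the children of $v_{j_1}$ in $F$. Feeding $F'$ back through the involution $C$ gives~(2); the identification $k_1=j_1'$ that you single out is exactly right, because by Lemma~\ref{lem: projective vertices} the smallest projective vertex has no relatively projective children, so all its children carry larger labels. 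Part~(3) is then the $D$-conjugate of~(2), using $C(DF)=\Delta^{-1}F$ and $F''=D\,C(DF)$ as you write.

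So your proof is correct and in fact more complete than the paper: it supplies, in disguise, the missing proofs of Proposition~\ref{prop: properties of Delta F}(3),(4) as well.
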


\subsection{Comments}
We note that the results of this paper partially answer a question in \cite{Sen} raised by the second author. The question was: ``Is it possible to interpret complete exceptional sequences of linear Nakayama algebras of rank $n$ by certain rooted labeled forests with $n$ labels?'' But another question is raised: In \cite{Sen} a bijection is constructed
between rooted labeled trees of height at most 1 and complete exceptional sequences for $A_n$ with radical square zero. How is this related to the bijection constructed in the
present paper? Can this bijection be extended to rooted labeled forests of larger bounded heights and exceptional sequences of other algebras?

\section{Acknowledgements}
The authors are grateful to Gordana Todorov for all her help and support. We also thank Olivier Bernardi for the speculation that the generating function for rooted labeled forests may be related to generating functions for exceptional sequences of type $A_n$. We thank Shujian Chen for discussions about chord diagrams and exceptional sequences. Also, Hugh Thomas explained the bijection from \cite{GY} to the first author many years ago. We would also like to thank the anonymous referee for many helpful comments and suggestions to improve the text.

The main results of this paper were presented as conjectures at the 2021 Workshop on Cluster Algebras and Related Topics held at the Morningside Center of Mathematics of the Chinese Academy of Sciences in Beijing held in August, 2021. The first author would like to thank the organizers of this conference for the opportunity to present this material. The first author also gratefully acknowledges the support of the Simons Foundation throughout this collaborative effort.


\begin{thebibliography}{9}



\bibitem{ASS} Assem, I., Simson, D., and Skowronski, A. (2006). Elements of the Representation Theory of Associative Algebras: Volume 1: Techniques of Representation Theory. Cambridge University Press. 


\bibitem{BW} Brady, T. and Watt, C., \textit{Non-crossing partition lattices in finite real reflection groups}, Trans. Amer. Math. Soc. 360 (2008), no. 4, 1983--2005.

\bibitem{BM} Buan, A.B., and Marsh, B., \emph{$\tau$-exceptional sequences.} Journal of Algebra 585 (2021): 36--68.

\bibitem{BRT}
Buan, A., Reiten, I. and Thomas, H., \emph{From $m$-clusters to $m$-noncrossing partitions via exceptional sequences}, Mathematische Zeitschrift 271 (2012), no. 3-4, 1117--1139.


\bibitem{Chow} Chow, W., \emph{On the algebraical braid group}. The Collected Papers Of Wei-Liang Chow. 2002. 74--78. 

\bibitem{Crawley-Boevey} Crawley-Boevey, W. (1993). \textit{Exceptional sequences of representations of quivers}. Representations of algebras (Ottawa, ON, 1992), 14, 117--124.



\bibitem{Garside} Garside, F. A., \emph{The braid group and other groups}, The Quarterly Journal of Mathematics, Volume 20, Issue 1, 1969, Pages 235--254. 


\bibitem{GIMO}  Garver, A., Igusa, K., Matherne, J.P., and Ostroff, J., \textit{Combinatorics of exceptional sequences in type A.} The Electronic Journal of Combinatorics, Vol 26, Issue 1 (2019) P1.20.

\bibitem{ER} E\u gecio\u glu, O. and Remmel, J. B., \emph{Bijections for Cayley trees, spanning trees, and their q-analogues}, J. Combin. Theory Ser. A 42(1986), 15--30.


\bibitem{GesselSeo} Gessel, I.M., Seo, S.,
\textit{A refinement of Cayley's formula for trees},
{Electron. J. Combin.}, 11 (2) (2006), p. \#R27.


\bibitem{GG} Gorsky, E.,  Gorsky, M., \textit{A braid group action on parking functions}. arXiv preprint arXiv:1112.0381.

\bibitem{GY} Goulden, I., and Yong, A., \textit{Tree-like properties of cycle factorizations.} Journal of Combinatorial Theory, Series A 98.1 (2002): 106--117. 

\bibitem{IOTW} Igusa, K., Orr, K., Todorov, G., and Weyman, J., \emph{Modulated semi-invariants}, arXiv: 1507.03051v2. 

\bibitem{Ig-Sh} Igusa, K. and Schiffler, R., \textit{Exceptional sequences and clusters.} Journal of Algebra 323.8 (2021): 2183--2202. 

\bibitem{IT13} Igusa, K. and Todorov, G., \textit{Signed exceptional sequences and the cluster morphism category}, arXiv: 1706.2222. 

\bibitem{I:ncp} Igusa, K. \emph{The category of noncrossing partitions.} arXiv preprint arXiv:1411.0196 (2014). 

\bibitem{IT} Ingalls, C. and Thomas, H., \textit{Noncrossing partitions and representations of quivers}. Compos. Math., 145(6):1533--1562, 2009. 

\bibitem{Loo} Looijenga, E., \textit{The complement of the bifurcation variety of a simple singularity}. Inventiones mathematicae 23.2 (1974): 105--116. 

\bibitem{MC} Mendoza, O., and Saenz, C. \textit{Tilting categories with applications to stratifying systems.} Journal of algebra 302.1 (2006): 419--449. 

\bibitem{MT} Mendoza, O., and Treffinger, H., \textit{Stratifying Systems through $\tau$-Tilting Theory}. Doc. Math. 25 (2020), pp. 701--720.

\bibitem{M} Moszkowski, P., \textit{A solution to a problem of D\'enes: a bijection between trees and factorizations of cyclic permutations}, European J. Combinatorics 10 (1989), 13--6. 

\bibitem{Ringel2} Obaid, M. A., Nauman, S. K., Shammakh, W. S. A., Fakieh, W. M., and Ringel, C. M. (2013). \textit{The number of complete exceptional sequences for a Dynkin algebra}. Colloquium Mathematicum 133 (2013), 197--210. 

\bibitem{RingelExcSeq} Ringel, C. M. (1994). \textit{The braid group action on the set of exceptional sequences of a hereditary Artin algebra}. Contemporary Mathematics, 171, 339--339. 

\bibitem{Schofield} Schofield, A. (1992). \emph{General representations of quivers}. Proceedings of the London Mathematical Society, 3(1), 46--64. 

\bibitem{Seidel} Seidel, U., \textit{Exceptional sequences for quivers of Dynkin type}. {Communications in Algebra}, 29(3),  (2001), 1373--1386. 


\bibitem{Sen} Sen, E., \textit{Exceptional sequences and idempotent functions}. {Rocky Mountain Journal of Mathematics} 51, no. 3 (2021): 1027--1036.

\bibitem{Sen2} Sen, E.,\textit{Weak exceptional sequences}. {Quaestiones Mathematicae}, 44, no. 9 (2021) 1155-1171. 


\bibitem{Sokal} Sokal, A. \textit{Total positivity of some polynomial matrices that enumerate labeled trees and forests I: forests of rooted labeled trees.} Monatshefte f\"ur Mathematik 200.2 (2023): 389--452.


\end{thebibliography}
\end{document}